\documentclass[12pt]{article}
\usepackage{latexsym,amsmath,amsthm,verbatim,ifthen,amssymb}
\usepackage[all]{xy}

\newtheorem{theorem}{Theorem}
\newtheorem{corollary}[theorem]{Corollary}
\newtheorem{lemma}[theorem]{Lemma}
\newtheorem{proposition}[theorem]{Proposition}
\newtheorem{example}[theorem]{Example}

\newtheorem{remark}[theorem]{Remark}

\providecommand{\Sec}{\operatorname{\rm Sec}}

\providecommand{\map}{\operatorname{\rm map}}

\providecommand{\Hom}{\operatorname{\rm Hom}}

\providecommand{\cat}{\operatorname{\rm cat}}

 \newcommand{\cu }{\mathbb{Q}}

 \newcommand{\abs}[1]{\left\vert#1\right\vert}

\newcommand{\cdga}{{CDGA}}

\newcommand{\bz}{\mathbb Z}

\newcommand{\bs}{\mathbb S}
\newcommand{\bt}{\mathbb T}
\newcommand{\bc}{\mathbb C}
\newcommand{\bq}{\mathbb Q}
\newcommand{\bk}{\mathbb K}
\newcommand{\bF}{\mathbb F}

  \DeclareMathOperator{\base}{\mathfrak{B}}

\newcommand{\pushoutcorner}[1][dr]{\save*!/#1-1.2pc/#1:(-1,1)@^{|-}\restore}

\begin{document}

\title{The homotopy fixed point set of Lie group actions on  elliptic spaces}
\author{Urtzi Buijs\footnote{Partially supported by the \emph{MICINN} grant MTM2010-15831, by the  grants FQM-213, 2009-SGR-119, and by the Marie Curie COFUND programme U-mobility, co-financed by the University of M\'alaga, the European Commision FP7 under GA No. 246550, and Ministerio de Econom\'{\i}a y Competitividad (COFUND2013-40259).}, Yves F\'elix$^{\dag}$,$\,$ Sergio Huerta$^{\dag}$ and Aniceto Murillo\footnote{Partially supported by the {\em MICINN}  grant MTM2010-18089 and by the Junta de
Andaluc\'\i a grant FQM-213.\hfill\break 2010 M.S.C.: 55P62, 55R91, 55R45.\hfill\break Keywords: Fixed point set, Homotopy fixed point set, rational homotopy, space of sections.}\\ }
\maketitle

\begin{abstract}
Let $G$ be a  compact connected Lie group, or more generally a path connected topological group of the homotopy type of a finite CW-complex, and let $X$ be a rational nilpotent $G$-space. In this paper we analyze the  homotopy type of the homotopy  fixed point set $X^{hG}$, and the natural injection $k\colon X^G\hookrightarrow X^{hG}$. We show that if $X$ is elliptic, that is, it has finite dimensional rational homotopy and cohomology, then each path component of $X^{hG}$ is also elliptic. We also give an explicit algebraic model of the inclusion $k$ based on which we can prove, for instance, that  for $G$ a torus, $\pi_*(k)$ is injective in rational homotopy but, often, far from being a rational homotopy equivalence.
\end{abstract}

\section*{Introduction}

  In one of its versions, the  \emph{generalized Sullivan conjecture} \cite{carl,hae,lan} asserts that, whenever $G$ is a finite $p$-group and $X$ is a $G$-CW-complex, then the inclusion
$$
k\colon X^G\hookrightarrow X^{hG}
$$
of the {\em fixed point set} into the {\em homotopy fixed point set} induces an isomorphism in cohomology with $\bF_p$ coefficients. Equivalently, the $p$-completion of $k$,
$$
k^{\wedge}_p\colon (X^G)^{\wedge}_p\hookrightarrow (X^{hG})^{\wedge}_p,
$$
is a homotopy equivalence. In this paper, broadly speaking, we study this fundamental question in the non-discrete case and when $p=0$.

Recall that the homotopy fixed point set of a given $G$-action on $X$ is defined as the space $\map^G(EG,X)$ of equivariant maps  from the universal $G$-space $EG$ into $X$. In the same way, the inclusion $k$ can be seen as the map
$$\map^G(*,X)\to\map^G(EG,X)$$
 induced by $EG\to*$.

From now on, and unless explicitly stated otherwise, $G$ will denote a compact connected Lie group, or more generally a path connected topological group of the homotopy type of a finite CW-complex. In the same way, by a topological $G$-space  we mean   a nilpotent $G$-space of the homotopy type of a CW-complex of finite type.
Even though the action of $G$ on such a space $X$ induces an action in the classical rationalization $X_\bq$, the homotopy fixed point set of the resulting action $(X_\bq)^{hG}$ may fail to be nilpotent (see \cite[Ex. 15]{bufemu2}) in which case, unambiguous rationalization is not possible. Besides, $(X_\bq)^{hG}$  may not have the homotopy type of a CW-complex, and in this case,  algebraic models  would only recover its weak homotopy type.

We then start by  setting a  sufficiently general context in  which the homotopy fixed point set of $G$-actions in rational nilpotent spaces has the homotopy type of a nilpotent CW-complex. Identifying $X^{hG}$ with the space  $\sec \xi$ of sections of the Borel fibration
$$
X\to X_{hG}\stackrel{\xi}{\to} BG,
$$
we see that,  if $\pi_{>n}X$ is torsion for a certain $n>1$, in particular if $X$ is elliptic, then $(X_\bq)^{hG}$ is a rational nilpotent complex of the homotopy type of a CW-complex. Then we prove (see Theorem \ref{principal1}),

\begin{theorem}\label{introteo} If $X$ is an elliptic space, then each path component of the homotopy fixed point set $(X_\bq)^{hG}$ is also elliptic.
\end{theorem}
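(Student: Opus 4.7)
The plan is to identify $(X_\bq)^{hG}$ with the space of sections $\sec(\xi)$ of the Borel fibration $\xi\colon X_{hG}\to BG$, as explained in the text, and to analyze each path component $\sec_\sigma(\xi)$ via a relative Sullivan model of $\xi$. Because $G$ has the homotopy type of a finite CW complex, classical results imply $G$ is rationally equivalent to a product of odd spheres, so $BG$ is rationally a finite product of even Eilenberg--MacLane spaces and its Sullivan model may be taken as $B=(\bq[x_1,\dots,x_r],0)$ with all $|x_i|$ even. Choosing a minimal Sullivan model $(\Lambda V,d)$ of $X$, with $V$ finite-dimensional and concentrated in degrees $\leq N$ by ellipticity, a relative minimal Sullivan model of $\xi$ then has the form $\phi\colon B\hookrightarrow B\otimes\Lambda V$.

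Fix a section $\sigma$ and let $\phi\colon B\otimes \Lambda V\to B$ be a cdga model of the associated augmentation. The standard derivation-complex description of rational homotopy groups of components of section spaces gives, for $n\geq 1$,
$$\pi_n\bigl(\sec_\sigma(\xi)\bigr)\otimes\bq\;\cong\; H_n\!\left(\bder\right).$$
A $B$-linear $\phi$-derivation of degree $-n$ is determined by its restriction to $V$ and carries $V^k$ into $B^{k-n}$, which vanishes for $k<n$. Since $V^k=0$ for $k>N$, the derivation complex is concentrated in degrees $-N\leq *\leq -1$, and in each such degree it is a finite direct sum of finite-dimensional pieces $B^{k-n}$. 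Therefore $\bder$ has finite total dimension, and hence so does $\pi_*(\sec_\sigma(\xi))\otimes\bq$.

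It remains to prove $H^*(\sec_\sigma(\xi);\bq)$ is finite-dimensional. Here the plan is to build an explicit minimal Sullivan model of $\sec_\sigma(\xi)$ whose generators correspond, after a degree shift, to a basis of $\bder$, and then verify a dimensional/parity balance condition on these generators ensuring that the resulting cdga has finite-dimensional cohomology; such a condition should be inherited from the ellipticity of $X$ (where the analogous balance holds for $\Lambda V$) together with the fact that all generators of $B$ lie in even degrees. This is the main obstacle: while finiteness of $\pi_*\otimes\bq$ follows immediately from the derivation count, upgrading it to finite-dimensional cohomology requires a careful bookkeeping of even versus odd generators of the Sullivan model of the section space, and may well require an auxiliary structural result on the inheritance of ellipticity through Borel fibrations over bases with polynomial cohomology.
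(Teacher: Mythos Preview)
Your setup is correct and your argument for finite-dimensional rational homotopy is fine (indeed, once you truncate $B$ to $A=\Lambda Q/(\Lambda Q)^{>N}$, the model of the section-space component is generated by a subspace of $V\otimes A^\sharp$, which is visibly finite-dimensional). But the proposal does not prove the theorem: the entire content lies in showing that $H^*$ is finite-dimensional, and you explicitly leave this as an obstacle to be resolved by an unspecified ``dimensional/parity balance condition.''

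That strategy will not close the gap. For a Sullivan algebra $(\Lambda W,d)$ with $\dim W<\infty$, parity inequalities such as $\dim W^{\mathrm{even}}\le \dim W^{\mathrm{odd}}$ are \emph{necessary} for ellipticity but far from sufficient; simple examples like $(\Lambda(x,y),\,dx=0,\,dy=0)$ with $|x|$ even and $|y|$ odd already fail. So no bookkeeping of even versus odd generators, however careful, can by itself force $\dim H^*<\infty$.

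What the paper actually does is apply Halperin's characterization (Theorem~\ref{prop:puroEliptico}(iii)): it suffices to order the generators so that every even generator $w$ of the section-space model has some power $w^{N}$ equal to a boundary modulo the ideal generated by the preceding generators. The ellipticity of $X$ supplies, for each even $x_i\in V$, an element $\Psi_i$ with $d_\sigma\Psi_i=x_i^{N_i}+\Theta_i$. The paper then defines a specific total order on the basis $\{x_i\otimes(a_1^{j_1}\cdots a_t^{j_t})^\sharp\}$ of $(V\otimes A^\sharp)^{\mathrm{even}}$ (primarily by the ratio $|x_i|/|(a_1^{j_1}\cdots a_t^{j_t})^\sharp|$, refined by the index $i$ and a lexicographic order on $A^\sharp$), and shows that the element $\eta_i=\rho^{-1}[\Psi_i\otimes(a_1^{j_1N_i}\cdots a_t^{j_tN_i})^\sharp]$ satisfies
\[
\widetilde d_\sigma\,\eta_i \;=\; \bigl(x_i\otimes(a_1^{j_1}\cdots a_t^{j_t})^\sharp\bigr)^{N_i}+\nu_i,
\]
with $\nu_i$ in the ideal of strictly smaller generators. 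The verification is a case-by-case analysis of the three pieces $x_i^{N_i}$, $\Theta_i$, and the $A^+$-part of $D\Psi_i$, using the explicit coproduct on $A^\sharp$ and the elementary inequality that among fractions $r_j/s_j$ at least one is $\le (\sum r_j)/(\sum s_j)$. This construction of nilpotency witnesses, together with the bespoke ordering, is the missing idea in your proposal.
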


A consequence of this if is the following criterium to detect elliptic  $\bs^1$-spaces, see Corollary \ref{corolario1}.

\begin{corollary}\label{corolarioi}
A  $\bs^1$-space $X$ is elliptic if and only if each path component of the homotopy fixed point set $(X_\bq)^{h\bs^1}$ has finite Lusternik-Schnirelmann category.
\end{corollary}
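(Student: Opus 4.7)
The proof separates into two implications.

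For the forward direction, suppose $X$ is elliptic. Theorem~\ref{introteo} guarantees that each path component of $(X_\bq)^{h\bs^1}$ is elliptic, and the classical Friedlander--Halperin result---every rationally elliptic space has finite Lusternik--Schnirelmann category, bounded above by its formal dimension---then yields finite $\cat$ on each component.

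For the converse, assume every path component of $(X_\bq)^{h\bs^1}$ has finite $\cat$; we must conclude that $X$ is elliptic, i.e.\ that both $\dim_\bq H^*(X;\bq)<\infty$ and $\dim_\bq\pi_*(X)\otimes\bq<\infty$. The plan is to transfer finiteness from components of the homotopy fixed point set down to $X_\bq$ using the evaluation fibration
$$
\operatorname{ev}\colon (X_\bq)^{h\bs^1}\longrightarrow X_\bq
$$
obtained by restricting a section of the Borel fibration $X_{h\bs^1}\to B\bs^1$ to a basepoint of $B\bs^1$, together with the explicit algebraic model of the inclusion $k$ (the complexes $\bderf$ and $\moli$) developed in the body of the paper. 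Through this model one argues that any cup-length phenomenon in $H^*(X;\bq)$ lifts to cup-length in some component of $(X_\bq)^{h\bs^1}$; since $\cat\geq\,$cup-length, the hypothesis forces $H^*(X;\bq)$ to be finite-dimensional.

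The main obstacle is the corresponding statement for rational homotopy: finite $\cat$ alone is compatible with hyperbolic $\pi_*(\,\cdot\,)\otimes\bq$, as witnessed by $\bs^2\vee\bs^2$, so one cannot argue by a direct cup-length lift. The plan is to again exploit the algebraic model of the section space, which expresses $\pi_*((X_\bq)^{h\bs^1})$ in terms of $\pi_*(X_\bq)$ together with the $\bs^1$-action-induced differential; unbounded $\pi_*(X)\otimes\bq$ then translates, via the model and the already-established finite rational cohomology, into a component of $(X_\bq)^{h\bs^1}$ that violates finite $\cat$ (by the standard rational dichotomy, such a component cannot be elliptic and its hyperbolic growth can be exhibited in a form that obstructs finite LS-category once the cohomology of the fibre is bounded). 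Combining the two finiteness conclusions yields the ellipticity of $X$.
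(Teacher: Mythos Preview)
Your forward direction is fine and matches the paper's argument.

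The converse, however, is not a proof but a sketch with genuine gaps. First, the objects you invoke, ``the complexes $\bderf$ and $\moli$'', do not appear anywhere in the body of the paper; they are leftover macros in the preamble, so you are appealing to machinery that is not actually developed here. Second, neither of your two finiteness steps is carried out. The cup-length lifting claim (``any cup-length phenomenon in $H^*(X;\bq)$ lifts to cup-length in some component of $(X_\bq)^{h\bs^1}$'') is asserted, not proved, and it is not obvious: the evaluation map need not be injective on cohomology. More seriously, for the homotopy step you yourself note that finite $\cat$ is compatible with hyperbolic rational homotopy, and your resolution---``hyperbolic growth can be exhibited in a form that obstructs finite LS-category once the cohomology of the fibre is bounded''---is exactly the hard statement that needs proving. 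You have restated the goal, not argued for it.

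The paper's route is much shorter and structurally different. It uses the rational dichotomy for nilpotent \emph{finite} complexes (a hypothesis stated explicitly in the body as Corollary~\ref{corolario1} but suppressed in the introduction): such an $X$ is either elliptic or hyperbolic. The elliptic case is your forward direction. In the hyperbolic case the paper simply invokes \cite[Thm.~5]{bufemu2}, which already establishes that every path component of $X_\bq^{h\bs^1}$ has infinite LS-category when $X$ is hyperbolic. So the converse here is a one-line citation, not an argument internal to this paper; what you are attempting is essentially to reprove that cited theorem, and your outline does not get there.
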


As examples, we explicitly describe the rational homotopy type of the homotopy fixed point set of certain torus actions (see theorems \ref{accionenesfera}, \ref{torito} and \ref{complejito}).

\begin{theorem}\label{ejemplo1}
Let $\bt^m$ be the $m$-dimensional torus. The Borel fibration of any $\bt^m$-action with fixed points in an odd dimensional rational sphere $\bs_\bq^n$ is homotopically trivial. In particular,
$$
{\bs_\bq^n}^{h\bt^m}
\simeq
\map\bigl(({\bc P}^\infty\times\stackrel{m}{\cdots}\times{\bc P}^\infty)^{(n+1)} ,\bs^n_\bq\bigr)
,$$ which has the rational homotopy type of a finite product of odd dimensional spheres. For $m=1$ we can be more accurate:
$$
 {\bs_\bq^n}^{h\bs^1}\simeq \map\left({\bc P}^{\frac{n+1}{2}},\bs^n\right)_\bq \simeq_\bq \bs^1\times\bs^3\times\cdots\times\bs^n.
 $$
\end{theorem}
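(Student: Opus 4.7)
The plan is to reduce the computation of the homotopy fixed point set to a mapping space by proving that, in the presence of fixed points, the Borel fibration of any $\bt^m$-action on the odd rational sphere $\bs^n_\bq$ is rationally trivial. The remainder is then a direct calculation, using that $\bs^n_\bq\simeq K(\bq,n)$ for $n$ odd.

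First I would establish the triviality of the Borel fibration $\xi\colon(\bs^n_\bq)_{h\bt^m}\to B\bt^m$. A fixed point $p\in \bs^n_\bq$ provides a section $\sigma\colon B\bt^m\to (\bs^n_\bq)_{h\bt^m}$, $[e]\mapsto[e,p]$, so the relative Sullivan model of $\xi$,
$$
\bigl(H^*(B\bt^m;\bq),0\bigr)\hookrightarrow \bigl(H^*(B\bt^m;\bq)\otimes\Lambda(x),d\bigr),\qquad |x|=n \text{ odd},
$$
admits a CDGA retraction $r$ necessarily sending $x\mapsto 0$. Since $dx\in H^{n+1}(B\bt^m;\bq)$ is a polynomial in the base variables, applying $r$ to $dx$ gives $dx=r(dx)=d\,r(x)=0$. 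Hence $(\bs^n_\bq)_{h\bt^m}\simeq_\bq \bs^n_\bq\times B\bt^m$ and the section space is homotopy equivalent to the mapping space
$$
(\bs^n_\bq)^{h\bt^m}=\Gamma(\xi)\simeq \map\bigl((\bc P^\infty)^m,\bs^n_\bq\bigr).
$$

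Next, since $n$ is odd, $\bs^n_\bq\simeq K(\bq,n)$; hence a map out of $(\bc P^\infty)^m$ is determined up to homotopy by its restriction to the $(n+1)$-skeleton, yielding the first stated equivalence. The standard decomposition
$$
\map\bigl(Y,K(\bq,n)\bigr)\simeq\prod_{k\ge 0} K\bigl(H^{n-k}(Y;\bq),k\bigr)
$$
applied to $Y=((\bc P^\infty)^m)^{(n+1)}$ produces a finite product of rational Eilenberg--MacLane spaces in odd degrees $\le n$, each of which is a rational product of odd spheres.

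For the last sharpening, the case $m=1$ reduces to $Y=\bc P^{(n+1)/2}$, whose rational cohomology has a one-dimensional $\bq$-summand in each even degree $0,2,4,\dots,n+1$. Plugging into the decomposition above gives
$$
\map\bigl(\bc P^{(n+1)/2},\bs^n\bigr)_\bq\simeq K(\bq,1)\times K(\bq,3)\times\cdots\times K(\bq,n)\simeq_\bq \bs^1\times\bs^3\times\cdots\times\bs^n.
$$
The main obstacle is the first step: identifying precisely why the existence of a fixed point, transcribed through Sullivan models, forces the differential to vanish. The degree reason (odd fibre generator, even polynomial base) is what makes the retraction argument go through; without the oddness of $n$ one would have higher-degree obstructions involving products, and rational triviality of $\xi$ would fail in general.
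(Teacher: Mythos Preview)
Your argument is correct. The triviality step is exactly the paper's: a fixed point yields a CDGA retraction of the relative Sullivan model, and since $|x|=n$ is odd while $H^*(B\bt^m;\bq)$ is concentrated in even degrees, the retraction must send $x\mapsto 0$, forcing $dx=0$. The paper states this more tersely (``$\xi_{n+1}$ is modeled by the trivial algebraic fibration''), but the content is identical.

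Where you diverge is in computing the resulting mapping space. The paper applies its general Haefliger/Brown--Szczarba machinery: with $A$ the truncated polynomial model of $(B\bt^m)^{(n+1)}$ and $V=\langle x\rangle$, the model $(\Lambda(V\otimes A^\sharp),\widetilde d\,)$ of $\map\bigl((B\bt^m)^{(n+1)},\bs^n_\bq\bigr)$ has $\widetilde d=0$ by direct inspection, and one reads off the product of odd spheres from the generators $x\otimes(a_1^{i_1}\cdots a_m^{i_m})^\sharp$. You instead invoke $\bs^n_\bq\simeq K(\bq,n)$ and the standard decomposition $\map(Y,K(\bq,n))\simeq\prod_k K(H^{n-k}(Y;\bq),k)$. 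Your route is more elementary and sidesteps the section-space model entirely; the paper's route is uniform with the rest of the article (and reuses Theorem~\ref{thm::modeloEspacioSecciones}), which is why they prefer it. Both reach the same answer for the same reason: the fibre is a rational $H$-space with a single odd generator.
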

\begin{theorem}\label{ejemplo2}
Given circle actions with fixed points on the rational even sphere $\bs_\bq^n$ and the rational complex projective space $\bc P^n_\bq$,
\begin{itemize}
\item[(i)] The homotopy fixed point set ${\bs^n_\bq}^{h\bs^1}$ is either path component and of the rational homotopy type of
    $$
    \bs^1\times\frac{SO(n+2)}{U\left(\frac{n+2}{2}\right)},
$$
or else, it has two components, each of them of the rational homotopy type of
    $$
    \bs^{n+1}\times\bs^{n+3}\times\cdots\times\bs^{2n-1}.
     $$

\item[(ii)] The homotopy fixed point set ${\bc P^n_\bq}^{\,h\bs^1}$ has at most $n+1$ path components and each of them is of the rational homotopy type of one of the following spaces:
    $$
    \bs^1\times\bs^3\times\dots\times\bs^{2i-1}\times {\bc P}^i\times\bs^{2i+3}\times\dots\times\bs^{2n+1},\quad i=1,\dots,n.
    $$
\end{itemize}
\end{theorem}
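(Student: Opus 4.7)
The plan is to compute both statements directly from the algebraic model $X^{hG}\simeq\secc(\xi)$ developed earlier in the paper, combined with a classification of the Sullivan models of the Borel fibration in each case. The first task is to list the possible KS-extensions. For $X=\bs^n_\bq$ with $n$ even, the minimal Sullivan model is $(\Lambda(x,y),d)$ with $|x|=n$, $|y|=2n-1$ and $dy=x^2$; degree parity forces $Dx=0$ in any extension $(\bq[t]\otimes\Lambda(x,y),D)$, while after the change $x\mapsto x+\gamma t^{n/2}$ one obtains $Dy=x^2+\lambda t^n$ with $\lambda\in\{0,-1\}$ up to rescaling. For $X=\bc P^n_\bq$, the minimal model is $(\Lambda(x,y),d)$ with $|x|=2$, $|y|=2n+1$ and $dy=x^{n+1}$; the fixed-point hypothesis (translated through the rational Borel localization theorem) forces $Dx=0$ and $Dy=\prod_{j=0}^{n}(x-a_jt)$ with scalars $a_j\in\bq$, whose unordered multiset is the invariant of the action.

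The second step is to identify the path components of $X^{h\bs^1}$ with homotopy classes of Sullivan sections of the relevant KS-model. For $\bs^n_\bq$ with $\lambda\ne 0$, the equation $x^2=t^n$ yields the two sections $x\mapsto\pm t^{n/2}$, and hence two components; for $\lambda=0$, obstruction theory (there is no non-trivial rational class in $[\bc P^\infty,\bs^n_\bq]$) gives exactly one component. For $\bc P^n_\bq$, each distinct value among $\{a_j\}$ provides a Sullivan section $x\mapsto a_jt$, so there are at most $n+1$ components.

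The third step is the rational homotopy type of each component, computed from the derivation-based model of $\secc(\xi)$ twisted at the chosen section. For $\bs^n_\bq$ with $\lambda\ne 0$, substituting $\tilde x=x\mp t^{n/2}$ in the twisted complex collapses the $\bq[t]$-factor and leaves the odd derivations $y\mapsto t^k$ with $k=0,\dots,(n-2)/2$, yielding the product of spheres in degrees $n+1,n+3,\dots,2n-1$. For $\bc P^n_\bq$ at a component whose root $a_i$ has multiplicity $r$, an analogous substitution produces a $\bc P^{r-1}$-factor from the repeated factor and one odd-sphere factor from each of the remaining simple factors, together giving the listed product $\bs^1\times\bs^3\times\cdots\times\bs^{2i-1}\times\bc P^i\times\bs^{2i+3}\times\cdots\times\bs^{2n+1}$.

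The main obstacle is the trivial-action case for $\bs^n_\bq$, where $X^{h\bs^1}\simeq\map(\bc P^\infty,\bs^n_\bq)$ at the constant map must be identified rationally with $\bs^1\times SO(n+2)/U\bigl(\tfrac{n+2}{2}\bigr)$. Since $\bs^n_\bq$ is elliptic of formal dimension $2n-1$, the Haefliger section model reduces to a finite-dimensional derivation complex over a suitable truncation of $\bq[t]$; the technical core is to recognize its minimal model as that of the symmetric space $SO(n+2)/U\bigl(\tfrac{n+2}{2}\bigr)$ (crossed with an $\bs^1$ coming from the top derivation $y\mapsto 1$), which requires matching the derivation generators to the Chern--Pontrjagin odd generators of $SO(2m)/U(m)$. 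Once this identification is made, together with the partition analysis for $\bc P^n_\bq$, the theorem follows.
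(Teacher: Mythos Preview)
Your overall strategy coincides with the paper's: classify the possible KS-extensions modelling the Borel fibration, enumerate sections to count components, and compute each component from the Haefliger section model. For the even sphere the paper uses the normalization $Dy=x^2+\lambda a^{n/2}x$ (so that $x\mapsto 0$ is always a section) rather than your $Dy=x^2+\lambda t^n$; the two are related by completing the square and lead to the same dichotomy. In the $\lambda=0$ case the paper, like you, reduces to $\map(\bc P^N,\bs^n)_\bq$ and then quotes an external computation for the minimal model of $SO(n+2)/U\bigl(\tfrac{n+2}{2}\bigr)$.

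There is, however, a genuine gap in your treatment of $\bc P^n_\bq$. The assertion that the fixed-point hypothesis, via Borel localization, forces $Dy=\prod_{j=0}^n(x-a_jt)$ with all $a_j\in\bq$ is false: the existence of a fixed point only guarantees \emph{one} rational root of the associated polynomial, and the remaining factor need not split over $\bq$. The paper accordingly writes $Dy=x^{n+1}+\sum_{j=1}^{n}\lambda_j a^jx^{n+1-j}$ (after translating so that $x\mapsto 0$ is a section) and counts components as the rational roots of $z^{n+1}+\sum_j\lambda_j z^{n+1-j}$, which is where the bound $n+1$ comes from.

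A related problem is your description of the component type as ``a $\bc P^{r-1}$-factor from the repeated factor and one odd-sphere factor from each of the remaining simple factors.'' The odd spheres do not correspond to the other roots. In the component model at a section of multiplicity $r$ the only surviving even generator is $x$, the odd generators are $y\otimes(a^k)^\sharp$ for $k=0,\dots,n$, and after the shift one has $\widetilde d\bigl(y\otimes(a^k)^\sharp\bigr)=\mu_k\,x^{\,n+1-k}$ with $\mu_k=0$ for $k>n+1-r$ and $\mu_{n+1-r}\neq 0$. A change of variables $z_k\mapsto z_k-(\mu_k/\mu_{n+1-r})x^{?}z_{n+1-r}$ makes all but one differential vanish, yielding exactly one $\bc P^{r-1}$-factor and $n$ odd spheres, independently of how (or whether) the rest of the polynomial factors. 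Your counting ``one sphere per remaining simple factor'' gives $n+1-r$ spheres, which is wrong unless $r=1$. Once you replace the splitting claim by the straightforward generator count above, your argument agrees with the paper's.
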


Concerning the homotopical behaviour of the inclusion of the fixed point set into its homotopy counterpart, given any rational $G$-space $X$ for which $\dim \pi_*(X)<\infty$, in particular any rational elliptic space, we build first a Sullivan model of
$
k\colon X^G\hookrightarrow X^{hG}
$
(see Theorem \ref{principal2}). Then, for $G$ a torus $\bt$, we prove the following (see Theorem \ref{principal3}), which already appears in \cite{bufemu2} for $\bs^1$ actions.

\begin{theorem}\label{introteo2}
The morphism induced by
$k\colon X^{\bt}\hookrightarrow X^{h\bt}$
 in rational homotopy groups is injective. In particular,
  if both $X^\bt$ and $X^{h\bt}$ are simply connected, then
$
\cat X^\bt\le \cat X^{h\bt}$.
\end{theorem}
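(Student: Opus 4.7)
The plan is to leverage the explicit Sullivan model of $k$ furnished by Theorem \ref{principal2} and construct a rational retraction of $k$; both conclusions of the theorem follow formally from such a retraction. For $\bt$ a torus of rank $m$, the minimal Sullivan model of $B\bt$ is the polynomial algebra $(\bq[t_1,\ldots,t_m],0)$ with all generators in even degree and trivial differential, which makes both the augmentation $\epsilon\colon\bq[t_1,\ldots,t_m]\to\bq$, $t_i\mapsto 0$, and the inclusion of constants $\iota\colon\bq\hookrightarrow\bq[t_1,\ldots,t_m]$ into CDGA morphisms. This structural fact is what ultimately makes the statement torus-specific.

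The $\bt$-action on $X$ comes with a relative Sullivan model
$$(\bq[t_1,\ldots,t_m],0)\hookrightarrow(\bq[t_1,\ldots,t_m]\otimes\Lambda V,D)$$
of the Borel fibration $X\to X_{h\bt}\to B\bt$, where $(\Lambda V,d)$ is the minimal model of $X$. A fixed point $p\in X^{\bt}$ corresponds to an augmentation $\phi\colon\bq[t_1,\ldots,t_m]\otimes\Lambda V\to\bq[t_1,\ldots,t_m]$ of $\bq[t_1,\ldots,t_m]$-CDGAs, which in turn classifies the path component $(X^{h\bt})_\phi$ containing the section associated to $p$. Theorem \ref{principal2} supplies compatible Sullivan models of $(X^{\bt})_p$, of $(X^{h\bt})_\phi$, and of the restricted inclusion $k$ between them; at the model level the morphism modelling $k$ is obtained, essentially, by specializing coefficients along $\epsilon$. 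I would then check that $\iota$ propagates through the derivation/section construction of Theorem \ref{principal2} to a CDGA morphism in the opposite direction that splits the model of $k$. Sullivan realization then yields a rational retraction $r\colon (X^{h\bt})_\phi\to(X^{\bt})_p$ of $k$.

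From the existence of $r$ both conclusions follow at once: since $r$ is a rational left inverse of $k$, the morphism $\pi_*(k)\otimes\bq$ is split injective, and in the simply connected case Lusternik--Schnirelmann category is monotone under retracts, so $\cat X^{\bt}\le\cat X^{h\bt}$. The main technical difficulty is the verification that $\iota$, applied to the section/derivation complex modelling $(X^{h\bt})_\phi$, really gives a CDGA morphism compatible with the (in general nontrivial) twisted differential $D$, and that it lands in the model of $(X^{\bt})_p$. This is where the evenness of the $t_i$ and the vanishing of the base differential are essential, and the argument directly extends the case of $\bs^1$-actions already treated in \cite{bufemu2}.
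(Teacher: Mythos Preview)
There is a genuine gap. Your description of the model of $k$ as ``essentially specializing coefficients along $\epsilon$'' is not accurate: in Theorem \ref{principal2} the model of $k$ is the composite
\[
(\Lambda(V\otimes A^{\sharp}),\widetilde d\,) \xrightarrow{\ \phi\ } (\Lambda(Z\otimes A^{\sharp}),\widetilde d\,) \xrightarrow{\ \gamma\ } (\Lambda Z, d),
\]
and while $\gamma$ can be read as a specialization and does admit the obvious section $z\mapsto z\otimes 1$, the map $\phi$ is built from $\psi\colon (A\otimes\Lambda V,D)\to A\otimes(\Lambda Z,d)$, which encodes the inclusion $X^{\bt}\hookrightarrow X$ at the level of Borel constructions. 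To split $k$ you would need a CDGA map $(\Lambda Z,d)\to(\Lambda(V\otimes A^\sharp),\widetilde d\,)$, and here $V$ (generators for $X$) and $Z$ (generators for $X^{\bt}$) are different graded vector spaces; the only natural map between them, $\varphi\colon\Lambda V\to\Lambda Z$, goes the wrong way. Nothing obtainable from $\iota$ alone produces a preimage in $\Lambda(V\otimes A^\sharp)$ for a given $z_j\otimes 1$.

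What is actually needed is the Borel Localization Theorem (Theorem \ref{borelho}, via Lemma \ref{aux}): it guarantees, for each generator $z_j$ of $(\Lambda Z,d)$, an element $w_j\in V$ and an integer $m_j\ge 0$ with $\psi(w_j)=a^{m_j}z_j+\Gamma_j$, $\Gamma_j$ decomposable, so that $\gamma\phi\bigl(w_j\otimes(a^{m_j})^\sharp\bigr)=z_j$ modulo $\Lambda^{\ge 2}Z$. This yields surjectivity of the model of $k$ on indecomposables, hence injectivity of $\pi_*(k)\otimes\bq$; it does \emph{not} produce a CDGA section, and the paper does not claim one exists. The $\cat$ inequality is derived instead from the Mapping Theorem \cite[Thm.~28.6]{fehatho}, which is exactly why the simple connectivity hypothesis is needed; if your retraction argument worked, that hypothesis would be superfluous. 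The reduction from a general torus to $\bs^1$ is then handled by choosing a circle $\bs^1\subset\bt$ with $X^{\bt}=X^{\bs^1}$ and comparing the two Borel fibrations.
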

In certain cases we can be more explicit. For instance, when $X=\bs_\bq^n$ is an odd dimensional  rational sphere, the map
$$
k\colon X^\bt\hookrightarrow X^{h\bt}
$$
has the rational homotopy type of the inclusion of a certain factor
$$
S^{n_i}\hookrightarrow \bs^{n_1}\times\dots\times\bs^{n_p},
$$
in a product of odd dimensional spheres (see Proposition \ref{ejemplo}). Nevertheless, for $\bs^1$-actions,  $k$ often fails to be a rational homotopy equivalence, see Proposition \ref{nunca}:

\begin{proposition}\label{joder2} For any minimal $\bs^1$-action on a non trivial, simply connected, rational elliptic space $X$, the map
 $
k\colon X^{\bs^1}\hookrightarrow X^{h\bs^1}
$
is never a homotopy equivalence.
\end{proposition}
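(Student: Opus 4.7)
The plan is to argue by contradiction, leveraging the injectivity of $\pi_*(k)\otimes\bq$ supplied by Theorem~\ref{introteo2}: if $k$ were a rational homotopy equivalence then $\pi_*(k)\otimes\bq$ would also be surjective, and I intend to produce a rational homotopy class of $X^{h\bs^1}$ that cannot lie in that image.

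First I would fix the minimal Sullivan model $(\Lambda V,d)$ of $X$; by ellipticity $V$ is finite dimensional, and by simple connectivity $V=V^{\geq 2}$. The minimal $\bs^1$-action corresponds to a minimal relative Sullivan model $(\bq[t]\otimes\Lambda V,D)$ of the Borel fibration $X\to X_{h\bs^1}\to B\bs^1$, with $|t|=2$ and $Dv=dv+\sigma(v)$, where $\sigma(v)\in\bq[t]^+\otimes\Lambda V$. Non-triviality of $X$ combined with minimality of the relative Sullivan algebra forces $\sigma\neq 0$: at least one generator of $V$ is genuinely perturbed by the action.

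Next I would feed $(\bq[t]\otimes\Lambda V,D)$ into Theorem~\ref{principal2} to obtain explicit CDGA models for $X^{\bs^1}$, for $X^{h\bs^1}$, and for $k$ itself. At the heuristic level, the model of $X^{\bs^1}$ is the $(t\mapsto 0)$-collapse of the model of $X^{h\bs^1}$; hence any class in the section-space model supported on strictly positive powers of $t$ is annihilated by the map modelling $k$ and, in particular, lies outside the image of $\pi_*(k)\otimes\bq$. Picking a generator $v\in V$ of minimal degree among those with $\sigma(v)\neq 0$, I would dualise inside the derivation complex that underlies the section-space model to construct a cocycle of the shape $v^\sharp\otimes t^N$, for a suitable integer $N\geq 1$, representing a non-zero rational homotopy class of $X^{h\bs^1}$ outside the image of $\pi_*(k)\otimes\bq$.

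The main obstacle is the last step: the cocycle property is guaranteed by the minimality of the choice of $v$, but verifying non-exactness demands a careful dissection of the derivation-complex differential induced by $D$, and it is here that the ellipticity of $X^{h\bs^1}$ from Theorem~\ref{introteo} together with the minimality of the relative Sullivan algebra must intervene to rule out spurious cancellations. Once such a surviving class is exhibited it contradicts the surjectivity that a homotopy equivalence would impose on $\pi_*(k)\otimes\bq$, finishing the argument.
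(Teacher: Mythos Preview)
Your proposal contains a genuine error: the assertion that ``non-triviality of $X$ combined with minimality of the relative Sullivan algebra forces $\sigma\neq 0$'' is false. The phrase ``non-trivial'' in the statement refers to the space $X$, not to the action. The trivial $\bs^1$-action on any space has $\sigma=0$ and is certainly $\bs^1$-minimal (since $D_1=0$ trivially), yet the proposition must cover this case as well. Your entire construction hinges on picking a generator $v$ with $\sigma(v)\neq 0$, so it collapses for trivial (or, more generally, homotopically trivial) actions.

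Beyond this, your route through derivation complexes and the delicate non-exactness verification is considerably more involved than necessary. The paper's argument is direct and avoids any cocycle analysis. The point is that minimality of the action ($D_1=0$), together with the existence of a section, forces $DV\subset A\otimes\Lambda^{\ge 2}V$; consequently the section-space model $(\Lambda(V\otimes A^\sharp),\widetilde d\,)$ already has $\widetilde d_1=0$, so its positive-degree generators \emph{are} the rational homotopy classes of $X^{h\bs^1}$. Now use only that $X$ is non-trivial, simply connected and elliptic: this guarantees an odd generator $v\in V$ of degree at least $3$. By Lemma~\ref{aux} (with $W=V$ since $\dim V=\dim Z$) one has $\psi(v)=a^n z+\Gamma$ with $\Gamma\in R\otimes\Lambda^{\ge 2}Z$. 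If $n\ge 1$ then $\gamma\circ\phi$ sends $v\otimes 1$ into $\Lambda^{\ge 2}Z$; if $n=0$ it sends $v\otimes a^\sharp$ (which still has degree $\ge 1$) into $\Lambda^{\ge 2}Z$. Either way the linear part of the model of $k$ has non-trivial kernel, so $\pi_*(k)\otimes\bq$ is not surjective. No cocycle or exactness check is needed.
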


In the next section we set notation, and presents a summary of the results we shall use  concerning the rational homotopy type of  spaces of sections of nilpotent fibrations. In Section 3, we prove Theorem \ref{introteo}. Section 4 contains Corollary \ref{corolarioi} and Theorems \ref{ejemplo1} and \ref{ejemplo2}. Finally, in \S5, we find a model of the map $k$ and prove  Theorem \ref{introteo2} and Proposition \ref{joder2}.

\section{Preliminaries: spaces of sections and their models}

We shall heavily rely on the basics of rational homotopy theory, all of which can be found in   \cite{fehatho}. Here, we simply remark a few facts.

We begin by recalling what we mean by a  {\em model} of a non necessarily path connected space $X$ as long as all its components are nilpotent CW-complexes of finite type. For it, consider the pair of adjoint functors \cite{bousgu,su},
$$\xymatrix{ {\text{\bf SimpSet}}& {\text{\bf CDGA}} \ar@<1ex>[l]^{\langle\,\,\rangle}
\ar@<1ex>[l];[]^{A_{PL}}\\}
$$
between the homotopy categories of commutative differential graded algebras,   \cdga's henceforth, and simplicial sets. As in \cite{bumu1}, a {\em model} of a space $X$ as above, is a free CDGA $(\Lambda W,d)$, possibly $\mathbb Z$-graded, such that its
simplicial realization $\langle(\Lambda W,d)\rangle$ has the same
homotopy type as the Milnor simplicial approximation $S_*(X_{\mathbb Q})$ of the rationalization $X_\cu$ of $X$. Here $X_\bq$ denotes the space whose path components are the classical rationalizations of the path components of $X$. In the same way, a model for a map $f\colon X\to Y$ is a morphism $(\Lambda U,d)\to(\Lambda W,d)$ of free CDGA's such that its simplicial realization has the same homotopy type as the simplicial approximation $S_*(f_\bq)$ of the rationalization $f_\bq$.

We also recall that a space $X$ is said to be (rationally) {\em elliptic} if both $H^*(X;\bq)$ and $\pi_*(X)\otimes\bq$ are finite dimensional vector spaces.

A classical characterization of elliptic spaces in terms of their Sullivan models reads as follows:
Recall that a Sullivan algebra
 $(\Lambda V,d)$  is {\em pure} if
$
d V^{\text{even}}=0$ and $ d V^{\text{odd}}\subset \Lambda V^{\text{even}}$.
Given a Sullivan algebra $(\Lambda V,d)$, its {\em associated pure Sullivan algebra} $(\Lambda V,d_\sigma)$ is obtained by decomposing, for each $v\in V$, $dv=d_\sigma v + \Phi_v$ with $d_\sigma v\in \Lambda V^{\text{even}}$ and $\Phi_v\in \Lambda^+V^{\text{odd}}\otimes\Lambda V^{\text{even}}$. By degree reasons,  $d_\sigma V^{\text{even}}=0$.

\begin{theorem}{\em \cite[Prop. 1]{hal1}}\label{prop:puroEliptico}
Let $(\Lambda V,d)$ be a Sullivan algebra with $\dim V<\infty$. Choose a homogenous basis $\{v_1,\dots,v_n\}$ of $\,V$ for which $dv_k\in\Lambda(v_1,\dots,v_{k-1})$, $k=1,\dots,n$. Then, the following are equivalent.

\medskip

(i) $\dim H^*(\Lambda V,d)<\infty$.

\medskip

(ii)
 $\dim H^*(\Lambda V, d_\sigma)<\infty$.

\medskip

(iii) For each even element $v_k$  of the considered basis, the cohomology class  $[v_k]\in H^*(\Lambda (v_k,\dots,v_n),\overline d\,)$ is nilpotent, that is, there exists a positive integer $N_k$ and elements
$$\Psi_k\in\Lambda V,\ \ \Theta_k\in\Lambda^+(v_1,\dots,v_{k-1})\cdot\Lambda V$$
 such that
$$
d \Psi_k=v_k^{N_k}+\Theta_k\text{.}
$$

\end{theorem}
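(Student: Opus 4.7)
The plan is to prove (i) $\Leftrightarrow$ (ii) via the classical \emph{odd spectral sequence} comparing $(\Lambda V,d)$ with its pure model, and (ii) $\Leftrightarrow$ (iii) via a descending induction on the ordered basis inside the pure algebra. For (i) $\Leftrightarrow$ (ii), introduce a bigrading induced by $V^{\text{odd}}$ such that the leading part of $d$ is $d_\sigma$ while $d-d_\sigma$ strictly raises filtration; since $\dim V^{\text{odd}}<\infty$ the filtration is bounded in each cohomological degree, producing a convergent spectral sequence with $E_1\cong H^*(\Lambda V,d_\sigma)$ abutting to $H^*(\Lambda V,d)$. The implication (ii) $\Rightarrow$ (i) is immediate: finite dimensionality of $E_1$ passes to the abutment. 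The converse (i) $\Rightarrow$ (ii) is delicate: because $\dim V<\infty$ forces the spectral sequence to collapse at a finite page, a Poincar\'e-series comparison rules out infinite cancellations, so finite dimensionality of the abutment forces the same on $E_1$.

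For (ii) $\Leftrightarrow$ (iii), work inside the pure algebra $(\Lambda V,d_\sigma)$, a Koszul-type complex with $d_\sigma V^{\text{even}}=0$ and $d_\sigma V^{\text{odd}}\subset\Lambda V^{\text{even}}$. Proceed by descending induction on $k$: the algebra $(\Lambda(v_k,\ldots,v_n),\bar d_\sigma)$ is obtained from $(\Lambda(v_{k+1},\ldots,v_n),\bar d_\sigma)$ by adjoining $v_k$. When $v_k$ is odd, the resulting Koszul-type extension has cohomology controlled by a long exact sequence, so finite dimensionality propagates. When $v_k$ is even, the nilpotency of $[v_k]$ in $\bar d_\sigma$-cohomology is exactly what truncates the polynomial algebra on $v_k$ and thereby keeps the total cohomology finite dimensional; conversely, if cohomology is finite dimensional, every positive-degree class in a finite-dimensional graded algebra is nilpotent, and in particular $[v_k]$ is. Nilpotency with respect to $\bar d_\sigma$ is transferred to nilpotency with respect to $\bar d$ by applying the already-established equivalence (i) $\Leftrightarrow$ (ii) inside each subalgebra $\Lambda(v_k,\ldots,v_n)$, and the resulting identity $\bar d\bar\Psi_k=v_k^{N_k}$ lifts back to $(\Lambda V,d)$ to produce the required $\Psi_k$ together with the correction $\Theta_k\in\Lambda^+(v_1,\ldots,v_{k-1})\cdot\Lambda V$.

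The main obstacle is the direction (i) $\Rightarrow$ (ii), which requires controlling the odd spectral sequence in both directions and ruling out any hidden cancellation that would allow infinite-dimensional $E_1$ to collapse to a finite-dimensional $E_\infty$; the argument leans critically on $\dim V<\infty$, which bounds the number of nontrivial pages. A secondary subtlety is the two-way transfer between $\bar d$ and $\bar d_\sigma$ inside each truncation $\Lambda(v_k,\ldots,v_n)$, which is what couples the spectral-sequence step and the Koszul induction; once both are in hand, the inductive core of the proof is essentially a polynomial truncation calculation in each Sullivan quotient.
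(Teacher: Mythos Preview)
The paper does not prove this statement; it is quoted from \cite[Prop.~1]{hal1} and invoked as a black box in the proof of Theorem~\ref{principal1}. There is therefore no proof in the paper to compare your attempt against.

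Evaluated on its own, your outline is close to the standard argument but has one real gap: the direction (i)$\Rightarrow$(ii). Collapse of the odd spectral sequence at a finite page does not by itself prevent an infinite-dimensional $E_1$ from abutting to a finite-dimensional $E_\infty$; for instance, take two columns $E_1^{0,q}=E_1^{1,q}=\bq$ for all $q\ge 0$ with $d_1$ an isomorphism for $q>0$, so that $E_2=E_\infty$ is two-dimensional while $E_1$ is not. No single-variable Poincar\'e-series identity distinguishes this situation, so the phrase ``a Poincar\'e-series comparison rules out infinite cancellations'' is not a proof. This gap then contaminates your transfer step: passing from $\overline d$-nilpotency to $\overline d_\sigma$-nilpotency via ``(i)$\Leftrightarrow$(ii) for each $\Lambda(v_k,\ldots,v_n)$'' needs precisely the unproved direction (i)$\Rightarrow$(ii) for those quotient algebras, so your route to (iii)$\Rightarrow$(ii) is circular.

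A clean repair is to abandon a direct spectral-sequence proof of (i)$\Rightarrow$(ii) and close the cycle as (ii)$\Rightarrow$(i)$\Rightarrow$(iii)$\Rightarrow$(ii). For (i)$\Rightarrow$(iii), show by induction on $k$ that each $H^*\bigl(\Lambda(v_k,\ldots,v_n),\overline d\,\bigr)$ is finite dimensional, using the long exact sequence associated with multiplication by the closed generator $v_{k-1}$; then $[v_k]$ is nilpotent because it lives in a finite-dimensional connected graded algebra. For (iii)$\Rightarrow$(ii), decompose the identity $\overline d\,\overline\Psi_k=v_k^{N_k}$ by odd word length: the component of odd word length zero reads $\overline d_\sigma\bigl(\overline\Psi_k^{\,(1)}\bigr)=v_k^{N_k}$, where $\overline\Psi_k^{\,(1)}$ is the odd-word-length-one part of $\overline\Psi_k$. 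This is (iii) for $d_\sigma$, obtained without invoking (i)$\Rightarrow$(ii), and your pure-algebra induction then yields (ii).
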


As most of our work rely in the identification of the homotopy fixed point set of a given action with the space of sections of the associated Borel fibration, we make a quick overview of the result we use concerning the rational homotopy type of the space of sections of a fibration. To do so  fix a nilpotent fibration,
$$F\to E\stackrel{p}{\to} B,$$
  that is, a fibration of  path connected, nilpotent spaces in which the action of $\pi_1(B)$ in the homotopy groups of the fibre is also nilpotent. By $\sec p$  we denote the space of continuous sections of $p$. If $\sigma$ is such a section, $\sec_\sigma p$ denotes the path component of $\sec p$ containing $\sigma$. In the pointed category these are $\sec^* p$ and $\sec^*_\sigma p$. Recall that the space $\map(X,Y)$ (respectively $\map^*(X,Y)$) of continuous maps (respectively pointed continuous maps) is simply the space of sections (respectively pointed sections) of the trivial fibration
$$Y\to X\times Y\to X.$$

It is well known  \cite[Chap. II, Thm. 2.6]{hilmisroit}, \cite[Thm. 4.1]{mo}, that if $B$ is a finite CW-complex and $F,E$ are CW-complexes of finite type, then $\sec p$ has the homotopy type of a CW-complex of finite type and each of its path component is nilpotent. In this case, it can be easily deduced from \cite[Thm. 5.3]{mo} that $\sec p_\bq$ has the weak homotopy type of $(\sec p)_\bq$,
$$
\sec p_\bq\simeq_w (\sec p)_\bq.
$$
However, this is not, in general, a homotopy equivalence as $\sec p_\bq$ may fail to be of the homotopy type of a CW-complex. Nevertheless,  $B_\bq$ is still of finite dimension and thus, the classical result \cite[Thm. 1.1]{kahn}, generalized to spaces of sections \cite{smre}, let us asserts that: if $\dim B=n$ and $\pi_{>n}(F)$ is torsion for some $n\ge 1$, then $\sec p_\bq$ is of the homotopy type of a CW-complex and therefore,
$$
\sec p_\bq\simeq (\sec p)_\bq.
$$
 If in the original fibration $F$ is already a rational space, it is important to remark that all of the above translates to the following: $\sec p$ is already a rational space of the weak homotopy type of $\sec p_\bq$ and it is homotopy equivalent to it whenever $B$ is finite dimensional and $\pi_{>\dim B}F=0$.

 We now turn to the case in which $B$ is not longer a finite complex. In this case, and for any $n\ge 1$, we denote by
 $$
 F\longrightarrow E_{n}\stackrel{p_n}{\longrightarrow}B^{(n)}
 $$
 the pullback fibration of the inclusion $B^{(n)}\hookrightarrow B$ of the $n$-skeleton over $p$,
$$ \xymatrix{
F\ar@{=}[r]\ar[d]&F\ar[d]\\
{\pushoutcorner{E_{n}}}\ar[d]^{p_n}\ar[r]&{E}\ar[d]^p\\
{B^{(n)}}\ar@{^(->}[r]&{B.}
}
$$
Observe that, if $\sigma\in\sec p$, the map $B^{(n)}\hookrightarrow B\stackrel{\sigma}{\to} E$ induces a section $\sigma_n\in\sec p_n$. This process defines  fibrations,
$$\xymatrix{
&&&\sec p{\qquad}\ar[lld]\ar[ld]\ar[rd]\ar[rrd]&&\\
\dots\ar[r]&\sec p_n\ar[r]&\sec p_{n-1}\ar[r]&\dots\ar[r]&\sec p_2\ar[r]&\sec p_1.\\
}
$$
The same applies to each path component and to the pointed case. It is immediate that,
$$
\begin{aligned}
&\sec p=\varprojlim\sec p_n,\\
&\sec_\sigma p=\varprojlim\sec_{\sigma_n} p_n,\\
&\sec^* p=\varprojlim\sec^* p_n,\\
&\sec^*_\sigma p=\varprojlim\sec^*_{\sigma_n} p_n.\\
\end{aligned}
$$
In particular, each component   $\sec_\sigma p$ or $\sec^*_\sigma p$ is not in general a nilpotent space but only pronilpotent. Nevertheless,
if $\pi_{\ge N}(F)=0$, classical obstruction theory let us build a homotopy inverse of $ \sec p\to \sec p_N$ and thus,
$$
\sec p\simeq\sec p_N.
$$
 In particular, each path component of $\sec p$ has the homotopy type of a nilpotent CW-complex of finite type. The same applies to the pointed situation.

This observation, together with all of the above for the case of a finite base, let us conclude in the following assertion:

If $F$ is a rational space with $\pi_{\ge N}(F)=0$, then $\sec p$ is a rational nilpotent space of the homotopy type of a CW-complex. Moreover,
$$
\sec p\simeq \sec p_N\simeq (\sec p_N)_\bq\simeq \sec {p_N}_\bq.
$$

Next, we recall specific Sullivan models of the spaces considered above. We will follow \cite{bufemu1,bumu1}, which is a slightly different approach of  \cite{bz}, based on the fundamental work of Haefliger \cite{haef}.

We first consider the base of
$
F\to E\stackrel{p}{\to}B,
$ to be a finite complex. Fix a model of this fibration
\begin{equation}\label{modelo}
 (A,d)\longrightarrow (A\otimes\Lambda V,D)\longrightarrow (\Lambda V,d),
 \end{equation}
in which $A$ is finite dimensional and denote by
$$(A^{\sharp},\delta)=(\Hom(A,\bq),d^\sharp)$$
 the differential graded coalgebra dual of $A$. Consider the free commutative $\bz$-graded algebra
   $$
   \Lambda( A\otimes\Lambda V\otimes A^\sharp),
   $$
   endowed with the differential induced by  $D$ and $\delta$. Let $J$ be the differential ideal generated by $1\otimes1-1$ and the elements
$$
\begin{aligned}
&v_1v_2\otimes\beta-\displaystyle \sum_j(-1)^{\abs{v_2}\abs{\beta'_j}}(v_1\otimes \beta'_j)(v_2\otimes\beta''_j)\text{,}\\
&b\otimes\alpha\otimes\beta-\displaystyle\sum_j(-1)^{\abs{\beta'_j}(\abs{ \alpha}+1)}\beta'_j(b)\alpha\otimes\beta''_j\text{,}
\end{aligned}
$$
with $v_1,v_2\in V$, $\alpha\in\Lambda V$, $b\in A$, $\beta\in A^\sharp$ and $\Delta\beta=\displaystyle\sum_j\beta'_j\otimes\beta''_j$.

The inclusion $V\hookrightarrow A\otimes\Lambda V$ induces an isomorphism of graded algebras
\begin{equation}\label{ecuation:definicionRho}
\rho\colon \Lambda(V\otimes A^{\sharp})\stackrel{\cong}{\longrightarrow}\Lambda(A\otimes\Lambda V\otimes A^{\sharp})/J\text{,}
\end{equation}
so that $\Lambda(V\otimes A^{\sharp})$ inherits a differential $\widetilde{d}$ which makes $\rho$ an isomorphism of differential graded algebras.  Then,

 %y entonces $\widetilde{d}$ define una diferencial en $\Lambda(V\otimes A^{\sharp})$ para la %cual $(\Lambda(V\otimes A^{\sharp}),\widetilde{d})$ es un modelo del espacio de secciones %$\sec(p)$.

\begin{theorem}\label{thm::modeloEspacioSecciones}
{\em \cite{bz,bufemu1}} $(\Lambda(V\otimes A^{\sharp}),\widetilde{d}\,)$ is a model of $\sec p$ and $(\Lambda(V\otimes A_+^{\sharp}),\widetilde{d}\,)$ is a model of $\sec^* p$.
\end{theorem}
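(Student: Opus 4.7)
The plan is to prove the universal property that characterizes $(\Lambda(V\otimes A^\sharp),\widetilde d\,)$ as a model of $\sec p$, following the strategy behind \cite{bz,bufemu1}. My first task is to verify that the construction is well-defined: the differential on $\Lambda(A\otimes\Lambda V\otimes A^\sharp)$ induced by $D$ and $\delta$ preserves the ideal $J$, so it descends via the isomorphism $\rho$ of \eqref{ecuation:definicionRho} to the differential $\widetilde d$ on $\Lambda(V\otimes A^\sharp)$. This amounts to a direct check on the generators of $J$, using the Leibniz rule and the fact that the coproduct $\Delta$ on $A^\sharp$ is dual to the multiplication on $A$.

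The heart of the proof is the following adjunction-type statement: for any CDGA $(R,d_R)$, there is a natural bijection
\begin{equation*}
\Hom_{\cdga}\bigl((\Lambda(V\otimes A^\sharp),\widetilde d\,),(R,d_R)\bigr)\;\cong\;\Hom_{\cdga/A}\bigl((A\otimes\Lambda V,D),(A\otimes R,d)\bigr),
\end{equation*}
where on the right we take CDGA morphisms over $A$ (with $A\otimes R$ given the obvious $A$-algebra structure). The bijection sends $\varphi\colon \Lambda(V\otimes A^\sharp)\to R$ to the morphism $v\mapsto \sum_i a_i\otimes\varphi(v\otimes\beta_i)$ where $\{a_i\}$ and $\{\beta_i\}$ are dual bases of $A$ and $A^\sharp$; the relations encoded by $J$ are precisely what is needed for this assignment to be multiplicative on $A\otimes\Lambda V$, while the definition of $\widetilde d$ makes it compatible with differentials. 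Note that the finite dimensionality of $A$ is essential here for the dual bases to exist and for $A\otimes R$ to compute the right tensor product.

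Next, I would convert this algebraic adjunction into a statement about simplicial sets by applying it to $R=\apl(\Delta^\bullet)$. Under the Sullivan--de Rham adjunction, CDGA morphisms $(A\otimes\Lambda V,D)\to (A\otimes\apl(\Delta^\bullet),d)$ over $A$ correspond to simplicial maps $\Delta^\bullet\times\langle A\rangle\to \langle A\otimes\Lambda V\rangle$ over $\langle A\rangle$, i.e.\ to simplices of the space of sections of the simplicial realization of the model \eqref{modelo}. Since the model \eqref{modelo} realizes to a fibration of the rational homotopy type of the rationalization of $F\to E\to B$, and since $B$ is a finite complex with $F,E$ of finite type (so that $\sec p$ has the homotopy type of a nilpotent CW-complex by the results recalled above), a standard argument identifies this simplicial mapping space with $S_*(\sec p_\bq)$ up to weak equivalence. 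Combining with the adjunction, $\langle\Lambda(V\otimes A^\sharp),\widetilde d\,\rangle\simeq \sec p_\bq$, which is the required statement.

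For the pointed case, fix a basepoint of $B$ corresponding to the augmentation $A\to\bq$, inducing the splitting $A=\bq\oplus A_+$ and dually $A^\sharp=\bq\oplus A_+^\sharp$. A pointed section corresponds algebraically to a morphism in the above bijection that factors through the augmentation at the basepoint, which translates into killing the generators $V\otimes \bq$; the remaining generators are exactly $V\otimes A_+^\sharp$. The main obstacle, in my view, is the careful bookkeeping in the verification that the relations in $J$ precisely encode multiplicativity on both sides of the adjunction, and that $\widetilde d$ is compatible with $D$ through this correspondence; this requires a somewhat delicate sign analysis, but is essentially formal once the definition of $\rho$ is in place.
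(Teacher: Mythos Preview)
The paper does not supply a proof of this statement: it is quoted as a known result from \cite{bz,bufemu1} (ultimately resting on Haefliger \cite{haef}) and used as input for the rest of the article. There is therefore nothing to compare your attempt against in the paper itself.

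That said, your outline is essentially the strategy of those references. The algebraic adjunction you describe,
\[
\Hom_{\cdga}\bigl((\Lambda(V\otimes A^\sharp),\widetilde d\,),R\bigr)\;\cong\;\Hom_{\cdga/A}\bigl((A\otimes\Lambda V,D),A\otimes R\bigr),
\]
is indeed the key algebraic point, and the relations generating $J$ are designed exactly so that this holds. The one step you pass over too quickly is the identification of the right-hand side, for $R=\apl(\Delta^\bullet)$, with the simplicial set of sections of the rationalized fibration. Your phrasing ``simplicial maps $\Delta^\bullet\times\langle A\rangle\to\langle A\otimes\Lambda V\rangle$ over $\langle A\rangle$'' is not literally correct: what you actually obtain is the simplicial set whose $n$-simplices are $A$-algebra maps $(A\otimes\Lambda V,D)\to A\otimes\apl(\Delta^n)$, and showing that this has the weak homotopy type of $\sec p_\bq$ is precisely Haefliger's theorem \cite{haef}. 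This is not merely formal; it requires an inductive argument over a Postnikov-type filtration of the relative Sullivan algebra and uses that $A$ is finite dimensional. Once that is granted, the rest of your sketch is accurate, including the pointed variant.
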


In the case of a trivial fibration
 $Y\to X\times Y\to X$ choose a model
 $$A\to A\otimes(\Lambda V,d)\to (\Lambda V,d)$$
  in which $A$ is a finite dimensional  {\cdga}  model of $X$ and $(\Lambda V,d)$ is a Sullivan model of $Y$.  Then, it is enough to consider the differential ideal   $J$ of $\Lambda (\Lambda V\otimes A^\sharp)$ generated by
  $1\otimes 1^*-1$ and the elements of the form
$$v_1v_2\otimes \beta -\sum_j (-1)^{|v_2| |{\beta_j}' |}(v_1\otimes {\beta_j}'
)(v_2\otimes {\beta _j}'' ),$$
in which  $v_1,v_2\in V,\ \beta \in A^\sharp$ and $ \Delta\beta =\sum_j
{\beta_j}'\otimes {\beta _j}''$. Again,
$$\rho\colon   \Lambda (V\otimes A^\sharp)\subset   \Lambda (\Lambda V\otimes A^\sharp)\longrightarrow    \Lambda (\Lambda V\otimes
A^\sharp)/J$$
is an isomorphism af graded algebras which induces a differential $\widetilde d$ in $\Lambda (V\otimes A^\sharp)$  such that,

 \begin{theorem}{\em \cite{bz,bufemu1}}$(\Lambda (V\otimes A^\sharp),\widetilde d\,)$ (respectively $(\Lambda (V\otimes A^\sharp_+),\widetilde d\,)$ is a model of
$\map(X,Y)$ (respectively $\map^*(X,Y)$).
\end{theorem}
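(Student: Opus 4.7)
The plan is to derive this statement by specialising Theorem \ref{thm::modeloEspacioSecciones} to the trivial fibration $Y\to X\times Y\to X$, and to verify that the larger presentation furnished by the general theorem collapses to the simpler one stated here.

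First I would fix the model
$$
A\longrightarrow A\otimes(\Lambda V,d)\longrightarrow (\Lambda V,d)
$$
of the trivial fibration, noting that the differential on the middle term is simply $d_A\otimes 1+1\otimes d$: no twisting term appears because the total space is a cartesian product. Applying Theorem \ref{thm::modeloEspacioSecciones} directly yields a model
$$
\Lambda(A\otimes\Lambda V\otimes A^\sharp)\big/J_{\text{full}}
$$
of $\sec p=\map(X,Y)$, where $J_{\text{full}}$ is the ideal generated by $1\otimes 1-1$ together with the two families of relations displayed before Theorem \ref{thm::modeloEspacioSecciones}. The task is to identify this CDGA with the simpler $(\Lambda(V\otimes A^\sharp),\widetilde d\,)$ described in the statement.

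Next I would compare the two presentations. The key observation is that the second family of relations in $J_{\text{full}}$, namely
$$
b\otimes\alpha\otimes\beta-\sum_j(-1)^{|\beta'_j|(|\alpha|+1)}\beta'_j(b)\,\alpha\otimes\beta''_j,
$$
provides, for every pure tensor containing a factor $b\in A$, an explicit reduction to a combination of elements belonging to $\Lambda V\otimes A^\sharp$; applied iteratively this shows that the natural inclusion $\Lambda(\Lambda V\otimes A^\sharp)\hookrightarrow\Lambda(A\otimes\Lambda V\otimes A^\sharp)$ induces a surjection onto $\Lambda(A\otimes\Lambda V\otimes A^\sharp)/J_{\text{full}}$. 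Once the $A$-variables are eliminated, the only relations remaining among elements of $\Lambda(\Lambda V\otimes A^\sharp)$ are precisely the multiplicativity relations of the first kind, namely the generators of the ideal $J$ defined in the statement. Therefore the induced map
$$
\Lambda(\Lambda V\otimes A^\sharp)/J\longrightarrow\Lambda(A\otimes\Lambda V\otimes A^\sharp)/J_{\text{full}}
$$
is an isomorphism of graded algebras, and together with the algebra isomorphism $\rho$ it transports the differential coming from Theorem \ref{thm::modeloEspacioSecciones} onto the differential $\widetilde d$ on $\Lambda(V\otimes A^\sharp)$ described in the construction preceding the statement.

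For the pointed case, I would repeat the argument restricting sections to those which are the constant map on the basepoint; this corresponds, under duality, to replacing $A^\sharp$ by its augmentation kernel $A^\sharp_+$, giving the model $(\Lambda(V\otimes A^\sharp_+),\widetilde d\,)$ of $\map^*(X,Y)$. The hard part is the bookkeeping in the reduction argument: one must verify that every generator of $J_{\text{full}}$ involving a nontrivial $A$-component is actually recovered from the second-type relations plus the first-type relations modulo them, so that nothing is lost by passing to the smaller ambient algebra $\Lambda(\Lambda V\otimes A^\sharp)$. Once this combinatorial check is done, compatibility with the differentials is automatic since the differential on $A\otimes\Lambda V$ is untwisted and the coproduct of $A^\sharp$ is dual to the product of $A$, so the map $\widetilde d$ agrees with the one obtained from the general theorem under $\rho$.
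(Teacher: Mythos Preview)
The paper does not actually prove this statement: it is cited from \cite{bz,bufemu1} and stated without proof. Your reduction from Theorem~\ref{thm::modeloEspacioSecciones} via the trivial fibration is precisely the simplification the paper sketches in the paragraph immediately preceding the statement, where it notes that for the untwisted differential the second family of relations becomes redundant and one may work directly in $\Lambda(\Lambda V\otimes A^\sharp)$ with only the multiplicativity relations; so your approach is correct and matches the paper's intended derivation.
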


To deduce, from the setting above, models  of the path components of $\sec p$ or $\sec^* p$ we follow the approach in \cite[\S4]{bumu1}: given an augmentation $\varphi\colon (\Lambda W,d)\to \bq $ of a given  {\cdga}, consider $K_\varphi\subset \Lambda W$ the differential ideal generated by
$W^{< 0}$, $d W^0$ and $\{ \alpha -\varphi(\alpha ),\, \alpha \in W^0 \}$. Then,
$(\Lambda W, d )/K_\varphi$ is the  {\cdga}
$$(\Lambda (\overline{W}^1\oplus W^{\geq 2}),d_\varphi)$$
 in which $\overline{W}^1$ can be seen as the complement of $d(W^0)$ in $W^1$ modulo the
identifications in $K_\varphi$.

Now, if $(\Lambda W,d)$ is a model of $Z$ and $\varphi$ corresponds to a  $0$-simplex of $Z$, then $(\Lambda (\overline{W}^1\oplus W^{\geq 2}),d_\varphi)$ is a Sullivan model of the path component of $Z$ containing the given $0$-simplex  \cite[4.3]{bumu1}.

 In particular, choose $\sigma\colon B\rightarrow E$  a section of $p$ and $\varphi\colon{(A\otimes\Lambda V,D)}\to{(A,d)}$ a retraction of (\ref{modelo}) modeling  $\sigma$. Observe that $\varphi$ corresponds to an augmentation which we denote in the same way $\varphi\colon (\Lambda(V\otimes A^{\sharp}),\widetilde{d}\,)\to \bq$. Then,

\begin{theorem}\label{thm::modeloComponenteEspacioSecciones}
{\em \cite{bz,bufemu1,bumu1}}
The projection
$$(\Lambda (V\otimes {A^\sharp}),\widetilde{d}\,)\longrightarrow (\Lambda (V\otimes {A^\sharp}),\widetilde{d}\,)/K_\varphi\cong (\Lambda (\overline{V\otimes {A^\sharp}}^1\oplus (V\otimes
{A^\sharp})^{\geq 2}),d_\varphi)$$ is a model of the inclusion {\em$\sec_\sigma(p)\hookrightarrow\sec(p)$}.
In the same way,
$$(\Lambda (V\otimes {A^\sharp_+}),\widetilde{d}\,)\longrightarrow (\Lambda (V\otimes {A^\sharp_+}),\widetilde{d}\,)/K_\varphi\cong (\Lambda (\overline{V\otimes {A^\sharp_+}}^1\oplus (V\otimes
{A^\sharp_+})^{\geq 2}),d_\varphi)$$ is a model of the inclusion {\em$\sec_\sigma^*(p)\hookrightarrow\sec^*(p)$}.
\end{theorem}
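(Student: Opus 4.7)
The plan is to apply the general component-extraction principle recorded in \cite[\S4]{bumu1}: given a model $(\Lambda W,d)$ of a space $Z$ and an augmentation $\varphi\colon(\Lambda W,d)\to\bq$ corresponding to a particular $0$-simplex of $Z$, the quotient $(\Lambda W,d)/K_\varphi$ is a Sullivan model of the path component of $Z$ containing that $0$-simplex, and the canonical projection is a model of the inclusion. Since $(\Lambda(V\otimes A^\sharp),\widetilde d\,)$ is already a model of $\sec p$ by the preceding theorem, what remains is to match the algebraic datum $\varphi$ with the geometric datum $\sigma$.

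The first step is therefore to establish the dictionary between augmentations of $(\Lambda(V\otimes A^\sharp),\widetilde d\,)$ and sections of $p$. Through the isomorphism $\rho$ of (\ref{ecuation:definicionRho}), an augmentation $(\Lambda(V\otimes A^\sharp),\widetilde d\,)\to\bq$ corresponds to a CDGA morphism $\Lambda(A\otimes\Lambda V\otimes A^\sharp)/J\to\bq$. Unwinding the defining relations of $J$, which encode $A$-linearity together with compatibility with the coalgebra structure on $A^\sharp$, such a morphism is nothing but an $A$-linear CDGA retraction $(A\otimes\Lambda V,D)\to(A,d)$ of the relative model (\ref{modelo}). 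Since retractions of (\ref{modelo}) are precisely the algebraic models of sections of $p$, the retraction $\varphi$ that models $\sigma$ produces, via $\rho$, an augmentation whose associated $0$-simplex in $\langle(\Lambda(V\otimes A^\sharp),\widetilde d\,)\rangle$ represents the section $\sigma$.

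With this dictionary in hand, \cite[4.3]{bumu1} applies directly and yields that $(\Lambda(V\otimes A^\sharp),\widetilde d\,)/K_\varphi\cong(\Lambda(\overline{V\otimes A^\sharp}^1\oplus(V\otimes A^\sharp)^{\geq 2}),d_\varphi)$ is a Sullivan model of $\sec_\sigma(p)$, and that the projection is a model of the inclusion $\sec_\sigma(p)\hookrightarrow\sec(p)$. The pointed statement follows by the same argument applied to $(\Lambda(V\otimes A^\sharp_+),\widetilde d\,)$, once one observes that pointed sections correspond to pointed retractions and hence to augmentations of the model for $\sec^* p$ supplied by the preceding theorem. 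The main obstacle I expect is the simplicial-level compatibility verified in the middle paragraph, namely checking that, under the adjunction between $A_{PL}$ and $\langle\,\rangle$, the algebraically constructed augmentation $\varphi$ really corresponds to the $0$-simplex of $\sec p$ determined geometrically by $\sigma$. Once that point is settled, everything else is a direct application of the component-extraction procedure of \cite[\S4]{bumu1}.
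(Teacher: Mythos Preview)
The paper does not actually prove this theorem: it is stated with citations to \cite{bz,bufemu1,bumu1} and no proof is given. What the paper does provide is exactly the sketch you have written out, in the paragraph immediately preceding the theorem: it recalls the general component-extraction principle from \cite[\S4]{bumu1}, notes that a retraction $\varphi\colon(A\otimes\Lambda V,D)\to(A,d)$ modeling $\sigma$ ``corresponds to an augmentation which we denote in the same way $\varphi\colon(\Lambda(V\otimes A^\sharp),\widetilde d\,)\to\bq$'', and then states the theorem. Your proposal is therefore a correct and faithful expansion of precisely this outline; the dictionary you spell out between $A$-linear retractions and augmentations via $\rho$ is the content of the word ``observe'' in the paper, and the rest is the direct application of \cite[4.3]{bumu1} that the paper invokes.
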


We now make no finiteness assumptions on the base $B$ of $p$. Recall that, if $A$ is a  {\cdga}  model of the space $X$, then the inclusion  $X^{(n)}\subset X$ of the $n$-skeleton is modeled by the projection
$$
A\twoheadrightarrow A_n=A/I,
$$
where $I=A^{\ge n+1}\oplus C^n$ and $C^n$ is a complement of the cocycles of $A^n$.

Then, if (\ref{modelo}) denotes again a model of $p$, the sequence
$$
A_n\longrightarrow (A_n\otimes \Lambda V,D')\longrightarrow (\Lambda V,d),
$$
in which
$$
(A_n\otimes \Lambda V,D')= A_n\otimes_A(A\otimes \Lambda V,D),
$$
is a model of the pullback fibration
$$
F\longrightarrow E_n\stackrel{p_n}{\longrightarrow} B^{(n)}.
$$
Hence, at the sight of all of the above, we have:

\begin{theorem}\label{modelotruncado}
If $\pi_{\ge N}(F)=0$, then  $(\Lambda(V\otimes A_N^{\sharp}),\widetilde{d}\,)$ is a model of  $\sec p$ while  $(\Lambda(V\otimes {A_N}_+^{\sharp}),\widetilde{d}\,)$ models $\sec^* p$.\hfill$\square$
\end{theorem}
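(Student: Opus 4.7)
The plan is to combine the homotopy-theoretic reduction $\sec p\simeq \sec p_N$, which was already established earlier in the section via classical obstruction theory under the assumption $\pi_{\ge N}(F)=0$, with the explicit section-space model of Theorem \ref{thm::modeloEspacioSecciones}, applied not to $p$ itself (whose base is possibly infinite-dimensional) but to the pullback fibration $p_N\colon E_N\to B^{(N)}$ whose base is the finite CW-complex $B^{(N)}$.

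First, I would recall the skeletal-model construction preceding the theorem: if $A$ is a CDGA model of $B$, then $A_N=A/I$ with $I=A^{\ge N+1}\oplus C^N$ is a finite-dimensional CDGA model of the inclusion $B^{(N)}\hookrightarrow B$. Tensoring the chosen model (\ref{modelo}) of $p$ over $A$ with $A_N$ produces the sequence
$$A_N\longrightarrow (A_N\otimes \Lambda V,D')\longrightarrow (\Lambda V,d),$$
which, as noted just before the statement, is a model of the pullback fibration $F\to E_N\stackrel{p_N}{\to} B^{(N)}$. Since $A_N$ is finite-dimensional, Theorem \ref{thm::modeloEspacioSecciones} applies verbatim and delivers $(\Lambda(V\otimes A_N^\sharp),\widetilde d\,)$ and $(\Lambda(V\otimes {A_N}_+^\sharp),\widetilde d\,)$ as models of $\sec p_N$ and $\sec^*p_N$ respectively.

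Next, I would invoke the equivalence $\sec p\simeq \sec p_N$ (and its pointed analogue $\sec^*p\simeq \sec^*p_N$) established earlier by obstruction theory from $\pi_{\ge N}(F)=0$: any section of $p_N$ extends uniquely up to homotopy over $B$ because the obstructions to extending over higher cells live in $H^{k+1}(B,B^{(N)};\pi_k(F))=0$ for $k\ge N$, producing a homotopy inverse to the restriction map $\sec p\to \sec p_N$. Transporting models across this homotopy equivalence yields the two statements of the theorem.

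The only delicate point, and the main obstacle to watch for, is verifying that the equivalence $\sec p\simeq \sec p_N$ is genuinely a homotopy equivalence of spaces having the homotopy type of CW-complexes (so that a Sullivan model of one side is automatically a model of the other), rather than merely a weak equivalence between pronilpotent towers. This is precisely what the discussion preceding the theorem established: under $\pi_{\ge N}(F)=0$ the inverse limit $\sec p=\varprojlim \sec p_n$ stabilizes at stage $N$, each $\sec p_n$ for $n\ge N$ has nilpotent CW-components of finite type, and the projection $\sec p\to \sec p_N$ is an honest homotopy equivalence. With that in hand, the theorem follows immediately from Theorem \ref{thm::modeloEspacioSecciones}.
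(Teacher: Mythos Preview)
Your proposal is correct and follows exactly the approach the paper intends: the theorem is stated with a $\square$ immediately after the phrase ``Hence, at the sight of all of the above, we have,'' meaning the authors regard it as a direct consequence of the preceding reduction $\sec p\simeq\sec p_N$ combined with Theorem~\ref{thm::modeloEspacioSecciones} applied to the finite-base fibration $p_N$ via its model $A_N\to(A_N\otimes\Lambda V,D')\to(\Lambda V,d)$. You have simply made explicit the two-step argument the paper leaves implicit, including the care about CW type that justifies transporting the model across the equivalence.
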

Finally, an analogous result to Theorem \ref{thm::modeloComponenteEspacioSecciones}, replacing $A$ by $A_N$, provides models for the  path components of  $\sec p$.

\section{Actions on elliptic spaces}

As  stated in the Introduction, $G$ will always denote a compact connected Lie group, or more generally a path connected topological group of the homotopy type of a finite CW-complex. Also, any considered topological $G$-space  $X$ will be of the homotopy type of a nilpotent CW-complex of finite type.

Given  a $G$-action $G\to\map(X,X)$, the rationalization functor   determines the homotopy type of a map $G\to\map(X_\bq,X_\bq)$. We see now that within this homotopy type there exist a representative which is in fact a $G$-action. This is well known if $X$ is a $G$-CW-complex (see for instance \cite[Chap. 1]{allpu} for the basics of these complexes). In fact \cite{may3}, \cite[II.3]{may},\cite[Thm. 10]{maymctrian}, $X$ always admits an {\em equivariant rationalization}
$\ell\colon X\longrightarrow  X_\bq$, that is, $X_\bq$ is also a  $G$-CW-complex, $\ell$ is an equivariant map and, for each closed subgroup $H$ of $G$, the map induced by $\ell$ on the fixed points $
 \ell^H\colon X^H\longrightarrow (X_\bq)^H,
 $
 is also a rationalization. In particular,
 $
 (X_\bq)^H\simeq (X^H)_\bq.
 $
In our slightly more general framework we proceed in a different way. Pulling back the rationalization $ X_\bq\to (X_{hG})_\bq\stackrel{\xi_\bq}{\to}  BG_\bq$ of the Borel fibration $
X\to X_{hG}\stackrel{\xi}{\to} BG$
 $$\xymatrix{
{X_\bq}\ar[d]\ar@{=}[r]&X_\bq \ar[d] \\
\pushoutcorner{E}\ar[r]\ar[d]&\left(X_{hG}\right)_\bq \ar[d]^{\xi_\bq} \\
{BG}\ar[r]&BG_\bq.
}
$$
we find the fibration $X_\bq\to E\to BG$ which sits in the following pullback diagram,
$$\xymatrix{
%&
G \ar@{=}[r] \ar[d] & G \ar[d] \\
%{X_\bq}\ar[r]\ar@{=}[d]&
X_\bq \ar[r] \ar[d] & EG \ar[d] \\
%{X_\bq}\ar[r]&
E \ar[r] & BG.
}
$$
As the right fibration is a principal $G$-bundle, $X_\bq$ inherits a $G$-action. We call this the {\em rational action on $X_\bq$} associated to the original $G$-action on $X$. Observe that the rationalization $X\to X_\bq$ is equivariant with respect to these actions and, in particular, $(X_\bq)^G\not=\emptyset$ as long as $X^G\not=\emptyset$. Moreover,

\begin{proposition}\label{prop::PostnikovPiece}
 If  $X$ is a Postnikov piece, that is $\pi_{>N}X=0$ for some $N$, then:
\begin{itemize}
\item[(i)] $X^{hG}$ has the homotopy type of a nilpotent CW-complex of finite type.
\item[(ii)]
$
(X^{hG})_\bq\simeq(X_\bq)^{hG}.
$
\end{itemize}
\end{proposition}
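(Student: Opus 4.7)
The plan is to identify $X^{hG}$ with the space $\sec\xi$ of sections of the Borel fibration $\xi\colon X_{hG}\to BG$ and, analogously, $(X_\bq)^{hG}$ with $\sec p$, where $p\colon E\to BG$ is the Borel fibration of the rational action on $X_\bq$ constructed just above the proposition. The defining pullback square for $E$ supplies a canonical fibrewise rationalization $X_{hG}\to E$ over $\mathrm{id}_{BG}$, and postcomposition with it yields the natural comparison map
\[
\iota\colon X^{hG}\simeq\sec\xi \longrightarrow \sec p\simeq (X_\bq)^{hG}.
\]

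For (i), I would exploit that $G$ has the homotopy type of a finite CW-complex, so $BG$ has finite type and every skeleton $BG^{(N)}$ is a finite complex. Since $\pi_{>N}X=0$, the obstruction-theoretic argument recalled in Section~2 produces a homotopy equivalence $\sec\xi\simeq\sec\xi_N$, where $\xi_N$ is the pullback of $\xi$ to $BG^{(N)}$. The classical theorems of Hilton--Mislin--Roitberg and M{\o}ller quoted there then guarantee that $\sec\xi_N$ has the homotopy type of a CW-complex of finite type with nilpotent path components, which is (i).

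For (ii) the same truncation applies to both sides (rationalization preserves the condition $\pi_{>N}=0$), reducing the problem to showing that $\iota_N\colon\sec\xi_N\to\sec p_N$ is a rationalization. Since $p_N$ has rational fibre $X_\bq$ over the finite-dimensional base $BG^{(N)}$ with $\pi_{>N}X_\bq=0$, the discussion of Section~2 gives that $\sec p_N$ is already a rational space of the homotopy type of a CW-complex.

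To conclude, I would invoke the Sullivan--Haefliger model of Theorem~\ref{thm::modeloEspacioSecciones}: fixing a finite-dimensional CDGA model $(A,d)$ of $BG^{(N)}$ and a Sullivan model $(\Lambda V,d)$ of $X$ (which is also a Sullivan model of $X_\bq$), the Koszul--Sullivan extension $(A,d)\to(A\otimes\Lambda V,D)\to(\Lambda V,d)$ models both $\xi_N$ and $p_N$, because the fibrewise rationalization $X\to X_\bq$ is a CDGA equivalence on fibres. Hence the same CDGA $(\Lambda(V\otimes A^\sharp),\widetilde d\,)$ is simultaneously a rational model of $\sec\xi_N$ and of $\sec p_N$, and $\iota_N$ realises this identification; as $\sec p_N$ is already rational, $\iota_N$ is the rationalization, proving (ii). The main obstacle I foresee is this last step: verifying that the natural comparison $\iota_N$, rather than some abstract Sullivan--Haefliger isomorphism, actually implements the rationalization, and that the whole argument descends compatibly from $BG^{(N)}$ back to $BG$.
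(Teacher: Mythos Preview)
Your argument for (i) is exactly the paper's: truncate to $BG^{(N)}$ via the obstruction-theoretic equivalence $\sec\xi\simeq\sec\xi_N$ and then quote the finite-base results of Hilton--Mislin--Roitberg and M{\o}ller.

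For (ii) you take a different route from the paper. The paper proves a short lemma: the equivariant rationalization $X\to X_\bq$ induces a rational equivalence $X_{hG}\to(X_\bq)_{hG}$ over $BG$, so the rationalized Borel fibrations $\xi_\bq$ and $\eta_\bq$ coincide. It then argues purely topologically via the chain
\[
(\sec\xi)_\bq\simeq(\sec\xi_N)_\bq\simeq\sec{\xi_N}_\bq\simeq\sec{\eta_N}_\bq\simeq\sec\eta_N\simeq\sec\eta,
\]
using M{\o}ller's theorem $(\sec\xi_N)_\bq\simeq\sec{\xi_N}_\bq$ for the second step and the lemma for the third. No CDGA models enter, and no specific comparison map needs to be tracked. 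Your approach instead invokes the Haefliger--Brown--Szczarba model of Theorem~\ref{thm::modeloEspacioSecciones} to say that the same $(\Lambda(V\otimes A^\sharp),\widetilde d\,)$ models both $\sec\xi_N$ and $\sec p_N$. This is correct in outline, but note that the sentence ``because the fibrewise rationalization $X\to X_\bq$ is a CDGA equivalence on fibres'' is not by itself enough: what you need is that the map of \emph{total spaces} $X_{hG}\to E$ over $BG$ is a rational equivalence, so that the same relative Sullivan algebra models both fibrations. That is precisely the content of the paper's lemma, and once you state it your argument goes through. The trade-off is that the paper's route sidesteps the obstacle you flag (whether your specific $\iota_N$ realizes the equivalence) because it never needs to name a comparison map, whereas your CDGA approach is more explicit and connects directly to the models used later in the paper.
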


\begin{lemma} The rationalizations of the  Borel fibrations $\xi$ and $\eta$, of the original action and the associated rational action respectively, coincide. In particular $(X_{hG})_\bq\simeq (X_\bq)_{hG}$
 \end{lemma}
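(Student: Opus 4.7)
The plan is to exploit the very pullback square that defines $\eta$ in order to compare it directly with $\xi_\bq$. By construction of the associated rational action, the total space $E = (X_\bq)_{hG}$ of $\eta$ is obtained as the pullback of $\xi_\bq\colon (X_{hG})_\bq\to BG_\bq$ along the rationalization $\ell\colon BG\to BG_\bq$. This pullback square furnishes a natural morphism of fibrations from $\eta$ to $\xi_\bq$ whose fibre map is the identity of $X_\bq$, whose base map is $\ell$, and whose total-space map $f\colon E\to (X_{hG})_\bq$ is the other leg of the pullback. The goal is to show that $f$ is itself a rationalization of $E$; this will imply $\eta_\bq\simeq\xi_\bq$.

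To establish that $f$ is a rationalization I would proceed in two short steps. First, since $G$ is path connected, $BG$ is simply connected, and hence $\eta$ is a fibration with simply connected base and nilpotent fibre $X_\bq$; in particular $E$ is a nilpotent CW-complex of finite type, so its rationalization is well defined in the usual sense. Second, I would compare the long exact sequences of homotopy groups of $\eta$ and $\xi_\bq$ after tensoring with $\bq$: the fibre map induces an isomorphism (it is the identity of a rational space) and the base map $\ell$ induces an isomorphism by definition of rationalization, so the five-lemma yields that $\pi_*(f)\otimes\bq$ is an isomorphism on each degree. A map from a nilpotent space inducing $\bq$-isomorphisms on all homotopy groups into a rational space is exactly a rationalization, hence $f$ rationalizes~$E$.

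It then follows that $\eta_\bq\simeq\xi_\bq$ as fibrations, which is the main assertion. Taking total spaces gives $((X_\bq)_{hG})_\bq\simeq (X_{hG})_\bq$, which is the ``in particular'' statement read in the rational category. I do not anticipate any serious obstacle: the only technical point requiring attention is the nilpotency of $E$, which is automatic from the simple connectivity of $BG$, and the rest is a routine five-lemma argument. The key conceptual ingredient is simply that fibrewise rationalization of a nilpotent fibration over a nilpotent base commutes with pullback along the rationalization of that base.
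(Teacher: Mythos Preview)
Your argument is correct, and the underlying mechanism---a five-lemma comparison of long exact homotopy sequences of two fibrations---is the same one the paper uses. The difference lies in \emph{which} pair of fibrations is compared. You work with the defining pullback square of the associated rational action, obtaining a morphism $\eta\to\xi_\bq$ over the rationalization $\ell\colon BG\to BG_\bq$, with the identity on fibres; then the five-lemma shows the total-space map $E\to (X_{hG})_\bq$ is a rational equivalence into a rational target, hence a rationalization. The paper instead uses that $X\to X_\bq$ is $G$-equivariant (established just before the lemma) and applies naturality of the Borel construction to get a morphism $\xi\to\eta$ over the \emph{identity} of $BG$; since fibre and base maps are rational equivalences, so is the middle map $X_{hG}\to (X_\bq)_{hG}$.

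Your route is arguably more self-contained, since it reads the conclusion directly off the pullback square without invoking the equivariance of $X\to X_\bq$; the paper's route is a touch shorter once that equivariance is in hand. One small point you silently use is the identification of the pullback $E$ with the Borel construction $(X_\bq)_{hG}$ for the induced action; this is standard (any fibration over $BG$ is recovered as the associated bundle of the $G$-action on its fibre obtained by pulling back to $EG$), but it is worth a word since the paper does not state it explicitly. Your remark that the ``in particular'' clause should be read after rationalizing both sides is also apt.
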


  \begin{proof} As the Borel construction is natural with respect to $G$-equivariant maps, $X\to X_\bq$ induces the diagram
  $$\xymatrix{
{X}\ar[r]\ar[d]&{X_\bq}\ar[d]\\
X_{hG}\ar[r]\ar[d]^{\xi}&{(X_\bq)_{hG}}\ar[d]^\eta\\
{BG}\ar@{=}[r]&{BG\text{.}}
}$$
As the top and bottom maps are  rational equivalence so is the middle one.
  \end{proof}

\begin{proof}[of Proposition \ref{prop::PostnikovPiece}] The basic observations in \S2 clearly imply (i) as\break $X^{hG}\simeq\sec\xi$.

For the second assertion, and with the notation in \S2, $\sec \xi\simeq \sec \xi_N$, and thus $(\sec \xi)_\bq\simeq (\sec \xi_N)_\bq$. Again, by the general statements in the past section, $(\sec\xi_N)_\bq\simeq \sec {\xi_N}_\bq$. Now, by the lemma above, $\sec {\xi_N}_\bq\simeq  {\sec \eta_N}_\bq$. But note that the fibre of $\eta_N$ is a rational Postnikov piece. Hence $\sec\eta_N$ is rational and ${\sec \eta_N}_\bq\simeq \sec \eta_N$. Finally, observe that $\sec\eta_N\simeq \sec \eta$. This chain of identities reduces to $
(\sec\xi)_\bq\simeq\sec\eta
$. That is, $
(X^{hG})_\bq\simeq(X_\bq)^{hG}.
$
\end{proof}

\begin{remark}\label{remarko} {\em If the $G$-space $X$ is not a Postnikov piece, the homotopy fixed point set $X^{hG}$
may not be of the homotoy type of a CW-complex and thus, $(X^{hG})_\bq$ makes no sense from the classical point of view. However, if $\pi_{>N} X$ is torsion for some $N$, in particular if $X$ is rationally elliptic, $X_\bq$ is a Postnikov piece and by (i) of Proposition \ref{prop::PostnikovPiece}, the homotopy fixed point set of the rational action $(X_\bq)^{hG}$ is a nilpotent space of the homotopy type of a CW-complex. Moreover, in view of the general facts in \S2, $(X_\bq)^{hG}$is also a rational space for which,
$$
 (X_\bq)^{hG}\cong \sec \eta\simeq\sec \eta_N\simeq \sec {\eta_N}_\bq,
$$
where, as above, $\eta$ is the
 Borel fibration of the associated rational $G$-action on $X_\bq$.
.}
\end{remark}

From now on we will consider $G$-actions on rationalizations $X_\bq$ of elliptic spaces (which are not necessarily arising from actions in $X$). In view of the remark above, as there is no possible confusion, we will denote $(X_\bq)^{hG}$ simply by $X_\bq^{hG}$.

Our main result is the following.

\begin{theorem}\label{principal1}
Let $X$ be an elliptic space for which $X_\bq$ is a $G$-space. Then, each path component of the homotopy fixed point set $X_\bq^{hG}$ is also elliptic.
\end{theorem}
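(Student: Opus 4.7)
The plan is to combine Remark~\ref{remarko} with the explicit section-space models of Section~2 and then verify the pure model criterion of Theorem~\ref{prop:puroEliptico} for the resulting Sullivan algebra. Since $X$ is elliptic, $X_\bq$ is a Postnikov piece, so Remark~\ref{remarko} produces some $N$ with
$$
X_\bq^{hG}\simeq\sec\eta_N,
$$
where $\eta_N$ is the rational Borel fibration pulled back over the $N$-skeleton of $BG$. Let $(\Lambda V,d)$ be the minimal Sullivan model of $X_\bq$, so $\dim V<\infty$ by ellipticity of $X$. Since $G$ is a compact connected Lie group, the Sullivan model $A$ of $BG$ is polynomial on generators of even degree, and its truncation $A_N$ modeling $BG^{(N)}$ is finite dimensional and concentrated in non-negative even degrees. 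Theorem~\ref{modelotruncado} then provides the Sullivan model $(\Lambda(V\otimes A_N^\sharp),\widetilde d\,)$ of $X_\bq^{hG}$, whose generator space $V\otimes A_N^\sharp$ is finite dimensional.

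Given a section $\sigma$ of $\eta_N$ modeled by an augmentation $\varphi$, Theorem~\ref{thm::modeloComponenteEspacioSecciones} produces a Sullivan model of $\sec_\sigma\eta_N$ whose generators form the finite dimensional, positively graded vector space $\overline{V\otimes A_N^\sharp}^1\oplus(V\otimes A_N^\sharp)^{\ge 2}$. This immediately gives $\dim\pi_*(\sec_\sigma\eta_N)\otimes\bq<\infty$, which is one half of the ellipticity of each path component of $X_\bq^{hG}$.

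For the cohomological half I would apply Theorem~\ref{prop:puroEliptico} to this Sullivan model. The crucial structural observation is that $A_N$, and therefore $A_N^\sharp$, lives in even degrees, so the parity of a generator $v\otimes\beta$ of $V\otimes A_N^\sharp$ coincides with the parity of $v\in V$. A direct inspection of the Haefliger formula for $\widetilde d$ then identifies its associated pure differential $\widetilde d_\sigma$ on $\Lambda(V\otimes A_N^\sharp)$ with the Haefliger section differential produced by the \emph{pure} Borel model
$$
A_N\longrightarrow (A_N\otimes\Lambda V,D_\sigma)\longrightarrow(\Lambda V,d_\sigma),
$$
whose fiber $(\Lambda V,d_\sigma)$ has finite dimensional cohomology by the ellipticity of $X$ and condition~(ii) of Theorem~\ref{prop:puroEliptico}. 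Thus verifying condition~(ii) for the section model reduces to proving finite dimensional cohomology for the section model of a fibration with \emph{pure} elliptic fiber over a finite dimensional, polynomial-cohomology base.

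The main obstacle is precisely this final verification: checking condition~(iii) of Theorem~\ref{prop:puroEliptico} directly for the pure section model after projecting to a path component. Ellipticity of $X$ provides, for each even $v_k\in V$, integers $N_k$ and elements $\Psi_k,\Theta_k\in\Lambda V$ with
$$
d\Psi_k=v_k^{N_k}+\Theta_k,
$$
$\Theta_k$ lying in the ideal generated by the preceding generators in an admissible basis of $V$. The plan is to transport these relations through the isomorphism $\rho$ of~(\ref{ecuation:definicionRho}) and the comultiplication of $A_N^\sharp$: for each even generator $w=v_k\otimes\beta$ of $V\otimes A_N^\sharp$, using an ordering of the basis lexicographic in the index of $v_k$ and the internal degree of $\beta$, construct explicitly a cocycle $\Psi\in\Lambda(V\otimes A_N^\sharp)$ with $\widetilde d\,\Psi=w^M+\Theta$, where $\Theta$ lies in the ideal of lower generators. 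The genuine difficulty lies in the combinatorial bookkeeping of signs and of the defining ideal $J$ of the Haefliger construction, together with the fact that a single relation in $\Lambda V$ must be promoted to one relation for each of the many generators $v_k\otimes\beta$ (with $\beta\ne 1^*$) to which $v_k$ gives rise in $V\otimes A_N^\sharp$.
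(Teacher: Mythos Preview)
Your global strategy coincides with the paper's: reduce to $\sec\eta_N$ via Remark~\ref{remarko}, build the Haefliger model $(\Lambda(V\otimes A^\sharp),\widetilde d\,)$, pass to a path component, and verify criterion~(iii) of Theorem~\ref{prop:puroEliptico} for the associated pure algebra. Your parity observation and the reduction to the pure relative model are correct and match the paper's use of $\widetilde d_\sigma$.

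The gap is in the ordering you propose. An order that is ``lexicographic in the index of $v_k$ and the internal degree of $\beta$'' does not control the contribution of the \emph{Borel perturbation}. When you compute $\widetilde d_\sigma$ on the transported witness $\rho^{-1}[\Psi_i\otimes\alpha]$, besides $d_\sigma\Psi_i$ you pick up a term $\rho^{-1}[\Gamma\otimes\alpha]$ with $\Gamma\in A^+\otimes\Lambda^+V^{\mathrm{even}}$ coming from $D\Psi_i$. This produces monomials $(x_{\ell_1}\otimes b_1^\sharp)\cdots(x_{\ell_p}\otimes b_p^\sharp)$ in which the indices $\ell_m$ are completely unconstrained (they may all exceed $i$) and the individual $|b_m|$ are not bounded by $|\beta|$. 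Neither index-first nor $|\beta|$-first lexicographic orders force any factor of such a product to precede $x_i\otimes\beta$.

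What the paper actually does is order the even generators $x_k\otimes\beta$ (with $\beta\neq 1$) primarily by the \emph{ratio} $|x_k|/|\beta|$, with ties broken first by the index $k$ and then by a lexicographic order on $A^\sharp$; the elements $x_k\otimes 1$ precede all of these. The point of the ratio is that the passage from $x_i^{N_i}$ to a summand $a_1^{k_1}\cdots a_t^{k_t}\cdot x_{\ell_1}\cdots x_{\ell_p}$ of $\Gamma$ (with some $k_m>0$) strictly lowers $\sum|x_{\ell_m}|$ and strictly shortens the available $A^\sharp$-word, so the aggregate ratio of the resulting monomial is strictly smaller than $|x_i|/|\beta|$; an elementary mediant inequality then guarantees that at least one factor has strictly smaller ratio and hence precedes $x_i\otimes\beta$. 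The $\Theta_i$ and $x_i^{N_i}$ pieces are handled by the tie-breakers (index and lexicographic, respectively). Without this ratio ordering---or an equivalent device---the verification of criterion~(iii) does not go through, and this is precisely the content missing from your sketch.
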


The remaining of the section is devoted to the proof of the result above.

 Assume that $\pi_{\ge N}X_\bq=0$. Then, as noted in Remark \ref{remarko},  $X_\bq^{hG}$ is rational, nilpotent and
$$
 X_\bq^{hG}\cong \sec \xi\simeq\sec \xi_N\simeq \sec {\xi_N}_\bq,
$$
where
$
X_\bq\longrightarrow (X_\bq)_{hG}\stackrel{\xi}{\longrightarrow}BG
$
is the Borel fibration associated to the $G$-action on $X_\bq$.

The minimal model of $G$ is an exterior algebra $(\Lambda P,0)$ where $P$ is an oddly graded finite dimensional space \cite[Pág. 143, Ex.3]{fehatho}. Hence \cite[Prop. 15.15]{fehatho}, the minimal model of $BG$ is $(\Lambda Q,0)$ with $Q^r=P^{r-1}$, and any relative Sullivan algebra modeling $\xi_N$ is of the form
\begin{equation}\label{hacefalta}
A\longrightarrow (A\otimes\Lambda V,D)\longrightarrow (\Lambda V,d),
\end{equation}
where
$A=(\Lambda Q/(\Lambda Q)^{>N},0)$
and $(\Lambda V,d)$ is the minimal model of $X$. Hence, in view of Theorem  \ref{modelotruncado}, a model of $
 X_\bq^{hG}$ is
 $$
 \bigl(\Lambda (V\otimes A^\sharp),\widetilde d\,\bigr).
$$
To model the path component of  $
 X_\bq^{hG}$ containing a given section we can always assume, modifying the differential $D$, that this section is modeled by the retraction  $\varphi\colon(A\otimes\Lambda V,D)\rightarrow A$, $\varphi(V)=0$. In particular, $DV\subset A\otimes\Lambda^+ V$.
 In this case, Theorem \ref{thm::modeloComponenteEspacioSecciones} asserts that a Sullivan model of the considered component is,
$$
\bigl(\Lambda (V\otimes {A^\sharp}),\widetilde{d}\,\bigr)/K_\varphi\cong \bigl(\Lambda (\overline{V\otimes {A^\sharp}}^1\oplus (V\otimes
{A^\sharp})^{\geq 2}),\widetilde d\,\bigr).
$$
Denote by
\begin{equation}\label{puroseccion} \bigl(\Lambda (\overline{V\otimes {A^\sharp}}^1\oplus (V\otimes
{A^\sharp})^{\geq 2}),\widetilde d_\sigma\bigr)
\end{equation}
its associated pure model.

Now, ellipticity of $X$ translates, via Theorem \ref{prop:puroEliptico}, to  the existence of a homogeneous ordered basis of
 $V$,
 $$
 \base_V=\{x_1,\dots,x_r,y_1,\dots,y_s\},
 $$
 where $\{x_1,\dots,x_r\}$, $\{y_1,\dots,y_s\}$ are basis of $V^\text{even}$ and $V^\text{odd}$ respectively, such that: for any $i=1,\dots,r$, there is a positive integer $N_i$, and elements
 $$\Psi_i\in\Lambda V,\ \ \Theta_i\in\Lambda^+(x_1,\dots,x_{i-1})\cdot\Lambda V,$$
  for which
\begin{equation}\label{formulaperita}
d_\sigma \Psi_i=x_i^{N_i}+\Theta_i.
\end{equation}
We will use the same criterium to show the elliptic character of (\ref{puroseccion}).
To do so, given  $\{a_1,\dots,a_t\}$
 a homogeneous basis   of $Q$, consider the induced basis of  $A$,
$$
\mathfrak{B}_A=\{a_1^{j_1}a_2^{j_2}\cdots a_t^{j_t}\},\quad j_i\ge 0, \quad i=1,\dots t,\quad \abs{a_1^{j_1}a_2^{j_2}\cdots a_t^{j_t}}\leq N,
$$
and the corresponding dual basis in  $A^\sharp$,
$$
\mathfrak{B}_{A^\sharp}=\{(a_1^{j_1}a_2^{j_2}\cdots a_t^{j_t})^\sharp\},\quad j_i\ge 0, \quad i=1,\dots t,\quad \abs{a_1^{j_1}a_2^{j_2}\cdots a_t^{j_t}}\leq N.
$$
The computation of the diagonal for the element of this basis
is straightforward:
\begin{equation}\label{delta}
 \Delta(a_1^{j_1}\cdots a_t^{j_t})^{\sharp}=\sum p^{\sharp}\otimes q^{\sharp},\quad  p,q\in \base_A,\,\,
 pq=a_1^{j_1}\cdots a_t^{j_t}.
 \end{equation}
Moreover, if we denote $\Delta_2=\Delta$ and $\Delta_n=(1_{A^\sharp}\otimes\dots\otimes 1_{A^\sharp}\otimes\Delta_2)\circ \Delta_{n-1}$, then,
\begin{equation}\label{deltan}
\Delta_n(a_1^{j_1}\cdots a_t^{j_t})^{\sharp}=\sum p_{i_1}^{\sharp}\otimes\cdots\otimes p_{i_n}^{{\sharp}},\quad p_{i_j}\in\base_A, \,\, p_{i_1}\dots p_{i_n}=a_1^{j_1}\cdots a_t^{j_t}.
\end{equation}

Now, we consider the basis of $\overline{V\otimes {A^\sharp}}^1\oplus (V\otimes
{A^\sharp})^{\geq 2}$ given by
$$
\mathfrak{B}=\{
v\otimes \alpha\},\quad v\in\mathfrak{B}_V,\quad \alpha\in\mathfrak{B}_{A^{\sharp}},\quad v\otimes\alpha\in \overline{V\otimes {A^\sharp}}^1\oplus (V\otimes
{A^\sharp})^{\geq 2}
\text{,}$$
 and define the following order on its elements.

 Given  $x_k,x_\ell\in\base_V$, we say that $x_k\otimes 1$ {\em precedes}  $x_\ell\otimes 1$, and we denote it by
$
x_k\otimes 1\prec x_\ell\otimes 1$, if $k<\ell$.

In general,
 given even elements
$$
x_k\otimes(a_1^{j_1}a_2^{j_2}\cdots a_t^{j_t})^{\sharp},\,\,x_\ell\otimes(a_1^{j_1'}a_2^{j_2'}\cdots a_t^{j_t'})^{\sharp}\in\base,
$$
where some $j_r$ and $j'_s$ are non zero,  $x_k\otimes(a_1^{j_1}a_2^{j_2}\cdots a_t^{j_t})^{\sharp}$ \emph{precedes}  $x_\ell\otimes(a_1^{j_1'}a_2^{j_2'}\cdots a_t^{j_t'})^{\sharp}$, and we denote it by
$$
x_k\otimes(a_1^{j_1}a_2^{j_2}\cdots a_t^{j_t})^{\sharp} \prec x_\ell\otimes(a_1^{j_1'}a_2^{j_2'}\cdots a_t^{j_t'})^{\sharp},
$$
whenever:

\begin{itemize}
\item[1.] $\displaystyle \frac{\abs{x_k}}{\abs{(a_1^{j_1}a_2^{j_2}\cdots a_t^{j_t})^{\sharp}}} <\frac{\abs{x_\ell}}{\abs{(a_1^{j_1'}a_2^{j_2'}\cdots a_t^{j_t'})^{\sharp}}}$; or else,

\item[2.]
$\displaystyle\frac{\abs{x_k}}{\abs{(a_1^{j_1}a_2^{j_2}\cdots a_t^{j_t})^{\sharp}}} =\frac{\abs{x_\ell}}{\abs{(a_1^{j_1'}a_2^{j_2'}\cdots a_t^{j_t'})^{\sharp}}}$ and $k<\ell$; or else,

\item[3.]
$\displaystyle\frac{\abs{x_k}}{\abs{(a_1^{j_1}a_2^{j_2}\cdots a_t^{j_t})^{\sharp}}} =\frac{\abs{x_\ell}}{\abs{(a_1^{j_1'}a_2^{j_2'}\cdots a_t^{j_t'})^{\sharp}}}$, $ x_k=x_\ell$, and
$$
(a_1^{j_1}a_2^{j_2}\cdots a_t^{j_t})^{\sharp}<_{A^\sharp}(a_1^{j_1'}a_2^{j_2'}\cdots a_t^{j_t'})^{\sharp}.
$$
\end{itemize}
Here,  $<_{A^\sharp}$ denotes the {\em lexicographic order} in  $\mathfrak{B}_{A^{\sharp}}$, that is,
$$
(a_1^{j_1}a_2^{j_2}\cdots a_t^{j_t})^{\sharp}<_{A^\sharp} (a_1^{j'_1}a_2^{j'_2}\cdots a_t^{j'_t})^{\sharp},
$$
if there is $i=1,\dots,t$ such that $j_i<j'_i$ and $j_l=j'_l$ for $l<i$.

Finally we choose any order on the odd elements of $\base$ and impose that  even elements always precede odd elements.

 We will devote the rest of the proof to show that, with this order, Theorem \ref{prop:puroEliptico}(iii) holds. Choose an even element
$x_i\otimes(a_1^{j_1}a_2^{j_2}\cdots a_t^{j_t})^{\sharp}\in\mathfrak{B}$ and fix, for $x_i$, an integer
 $N_i$ and elements
$$\Psi_i\in\Lambda V,\quad\Theta_i\in\Lambda^+(x_1,\dots,x_{i-1})\cdot\Lambda V,
$$
 satisfying equation (\ref{formulaperita}).
Consider,
 $$
 \begin{aligned}
 \alpha &=(a_1^{j_1N_i}a_2^{j_2N_i}\cdots a_t^{j_tN_i})^{\sharp}\in\base_{A^\sharp},\\
 \eta_i &=\rho^{-1}[\Psi_i\otimes\alpha]\in  \Lambda (\overline{V\otimes {A^\sharp}}^1\oplus (V\otimes
{A^\sharp})^{\geq 2}).
\end{aligned}
$$
Then, the proof of Theorem \ref{principal1} is completed with the following lemma, with $\rho$ as in (\ref{ecuation:definicionRho}).

\begin{lemma} With the notation above,
$$\widetilde{d}_\sigma\eta_i=\bigl(x_i\otimes(a_1^{j_1}a_2^{j_2}\cdots a_t^{j_t})^{\sharp} \bigr)^{N_i} + \nu_i,
$$
with  $\nu_i$ in the ideal $I$ generated by
$\{v\otimes\beta\in\mathfrak{B},\,\, v\otimes\beta\prec x_i\otimes(a_1^{j_1}a_2^{j_2}\cdots a_t^{j_t})^{{\sharp}} \}\text{.}$
\end{lemma}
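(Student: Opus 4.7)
My plan is to compute $\widetilde{d}_\sigma\eta_i$ explicitly via the isomorphism $\rho$ and then to separate the unique summand giving $(x_i\otimes\alpha_0)^{N_i}$ from everything else, which I will place inside $I$. Since $A$ has trivial differential, so does $A^\sharp$, and the pure differential on $\Lambda(V\otimes A^\sharp)$ corresponds, via $\rho$, to acting with the pure differential $D_\sigma$ on $A\otimes\Lambda V$ tensored with $\alpha$. Writing $D_\sigma(\Psi_i)=1\otimes d_\sigma\Psi_i+\xi_i$ with $\xi_i\in A^+\otimes\Lambda V^{\text{even}}$ absorbing any $A$-twisting, we obtain
$$\widetilde{d}_\sigma\eta_i=\rho^{-1}\bigl[(x_i^{N_i}+\Theta_i)\otimes\alpha\bigr]+\rho^{-1}[\xi_i\otimes\alpha].$$

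Iterating the first defining relation of $J$ together with formula (\ref{deltan}) gives
$$\rho^{-1}\bigl[x_i^{N_i}\otimes\alpha\bigr]=\sum_{p_1\cdots p_{N_i}=a_1^{j_1N_i}\cdots a_t^{j_tN_i}}(x_i\otimes p_1^\sharp)\cdots(x_i\otimes p_{N_i}^\sharp),$$
with trivial signs because $x_i$ and the $p_k^\sharp$ are even; the summand in which every $p_k=a_1^{j_1}\cdots a_t^{j_t}$ contributes exactly the leading term $(x_i\otimes\alpha_0)^{N_i}$. For any other summand, the identity $\sum_k|p_k|=N_i|\alpha_0|$ forces either some $|p_{k_0}|>|\alpha_0|$, making $|x_i|/|p_{k_0}^\sharp|<|x_i|/|\alpha_0|$ so that criterion~1 places $x_i\otimes p_{k_0}^\sharp\prec x_i\otimes\alpha_0$; or every $|p_k|=|\alpha_0|$. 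In the latter case I would locate the first coordinate $\ell^\ast$ where some exponent vector of $p_k=a_1^{e_{k,1}}\cdots a_t^{e_{k,t}}$ differs from $(j_1,\ldots,j_t)$, and use $\sum_k e_{k,\ell^\ast}=j_{\ell^\ast}N_i$ to produce an index $k_1$ with $e_{k_1,\ell^\ast}<j_{\ell^\ast}$, giving $p_{k_1}^\sharp<_{A^\sharp}\alpha_0$ and invoking criterion~3.

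The $\Theta_i$-contribution is handled the same way. Because $d_\sigma$ lands in $\Lambda V^{\text{even}}$ and the chosen basis puts all $x_k$'s before the $y_l$'s, every monomial $v^I$ of $\Theta_i\in\Lambda^+(x_1,\ldots,x_{i-1})\cdot\Lambda V^{\text{even}}$ uses only even generators and contains at least one factor $x_j$ with $j<i$; expanding $v^I\otimes\alpha$, the weighted-average identity $\sum_k|v_{i_k}|/\sum_k|p_k|=|x_i|/|\alpha_0|$ delivers either a factor with strictly smaller ratio (criterion~1) or, in the equal-ratio case, the factor $x_j\otimes p^\sharp$ with $j<i$ (criterion~2). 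Finally, $\rho^{-1}[\xi_i\otimes\alpha]$ rewrites, via the second defining relation of $J$, as a sum $\sum_l\psi_l\otimes q_l^\sharp$ with $\psi_l\in\Lambda V^{\text{even}}$ and $|q_l|<|\alpha|$, and each such term is treated by the same ratio/lex dichotomy. The main obstacle is this combinatorial bookkeeping; the key to its success is that the order $\prec$ has been calibrated so that the balanced partition $p_k=a_1^{j_1}\cdots a_t^{j_t}$ is the unique maximum, with criteria~1--2 ordering by degree ratio and criterion~3 breaking ties lexicographically.
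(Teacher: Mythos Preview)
Your argument is correct and follows the same route as the paper's proof: you compute $\widetilde d_\sigma\eta_i$ via $\rho$, split $D_\sigma\Psi_i$ into $d_\sigma\Psi_i+\xi_i$ (the paper's $d_\sigma\Psi_i+\Gamma$), and then handle the three contributions $x_i^{N_i}\otimes\alpha$, $\Theta_i\otimes\alpha$, $\xi_i\otimes\alpha$ separately using the weighted-average ratio inequality together with the lexicographic tiebreaker. Your treatment of the equal-degree case for $x_i^{N_i}\otimes\alpha$ (locating the first coordinate $\ell^\ast$ where some exponent deviates and using $\sum_k e_{k,\ell^\ast}=j_{\ell^\ast}N_i$) is in fact slightly more explicit than the paper's; for the $\xi_i$ term you could note that the ratio is \emph{strictly} smaller, so criterion~1 alone suffices and the lexicographic branch never occurs.
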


\begin{proof}
Note that, as the differential in  $A^\sharp$ vanishes,
$$
\widetilde d\eta_i=\widetilde d \rho^{-1}[\Psi_i\otimes\alpha]=\rho^{-1}[D\Psi_i\otimes \alpha].
$$
modulo the ideal $K_\varphi$.
Hence, if we write
$
 D\Psi_i=d_\sigma \Psi_i+ \Phi+\Gamma+\Omega$ with
 $$
 \Phi\in\Lambda^+V^{\text{odd}}\cdot \Lambda V,\quad \Gamma\in A^+\otimes\Lambda^+V^{\text{even}}, \quad \Omega\in A^+\otimes\Lambda^+V^{\text{odd}}\cdot\Lambda V,
 $$
we conclude that
\begin{eqnarray}
% \nonumber to remove numbering (before each equation)
\nonumber  \widetilde{d}_\sigma\eta_i &=& \rho^{-1}[d_\sigma\Psi_i\otimes\alpha]+\rho^{-1}[\Gamma\otimes\alpha] \\
\nonumber   &=& \rho^{-1}[x_i^{N_i}\otimes\alpha]+\rho^{-1}[\Theta_i\otimes\alpha]+\rho^{-1}[\Gamma\otimes\alpha]\text{.} \end{eqnarray}
We analyze each summand separately.

Modulo scalars, write
$\Gamma =\sum a_1^{k_1}\cdots a_t^{k_t} \otimes x_{\ell_1}\cdots  x_{\ell_p}$ so that
 $$
\rho^{-1}[\Gamma\otimes\alpha]=\sum \rho^{-1}[(a_1^{k_1}\cdots a_t^{k_t}\otimes x_{\ell_1}\cdots x_{\ell_p})\otimes\alpha].
$$
Now, in view of (\ref{delta}),(\ref{deltan}), and the generators  of the ideal  $J$ (see \S2),  any summand of the above formula either vanishes (if $k_m>j_mN_i$ for some $m=1,\dots,t$), or else,
\begin{equation}\label{terminando1}
\begin{aligned}
&\rho^{-1}[(a_1^{k_1}\cdots a_t^{k_t}\otimes x_{\ell_1}\cdots x_{\ell_p})\otimes\alpha]=\\
&\rho^{-1}[x_{\ell_1}\cdots x_{\ell_p}\otimes(a_1^{j_1N_i-k_1}a_2^{j_2N_i-k_2}\cdots a_t^{j_tN_i-k_t})^{\sharp})]=\\
&\sum(x_{\ell_1}\otimes b_1^{\sharp})\cdots(x_{\ell_p}\otimes b_p^{\sharp})\text{,}
\end{aligned}
\end{equation}
where the summation is taken over all possible families  $\{b_1,\dots,b_p\}$ of the basis $\base_A$ such that,
$
b_1\cdots b_p=a_1^{j_1N_i-k_1}\cdots a_t^{j_tN_i-k_t}.
$

We now apply the trivial fact by which $\displaystyle\frac{r-k}{k-s}<-\frac{r}{s}$ for every  positive integers  $r>s>k$.  Choosing,
$$
a=|x_i^{N_i}|,\qquad b=-\abs{(a_1^{j_1N_i}a_2^{j_2N_i}\cdots a_t^{j_tN_i})^{{\sharp}}},\qquad k=|a_1^{k_1}\cdots a_t^{k_t}|,
$$
we have
$$\frac{\abs{x_i}}{\abs{(a_1^{j_1}a_2^{j_2}\cdots a_t^{j_t})^{{\sharp}}}}=\frac{\abs{x_i^{N_i}}}{\abs{(a_1^{j_1N_i}a_2^{j_2N_i}\cdots a_t^{j_tN_i})^{{\sharp}}}}>\frac{\abs{x_{\ell_1}\cdots x_{\ell_p}}}{\abs{(a_1^{j_1N_i-k_1}a_2^{j_2N_i-k_2}\cdots a_t^{j_tN_i-k_t})^{\sharp}}}\text{.}$$

 Next, for each summand in (\ref{terminando1}) we will apply another trivial fact: let  $r_1,r_2,\dots,r_n$ and $s_1,s_2,\dots,s_n$ be positive and negative integers respectively. Then, either:
\begin{equation}\label{trivialidad}
 \displaystyle\frac{r_i}{s_i}<\frac{\sum_jr_j}{\sum_js_j}\,\,\,\,\text{for some $i$, or else,}\,\,\,\,
\frac{r_i}{s_i}=\frac{\sum_jr_j}{\sum_js_j}\,\,\,\,\text{for all $i$.}
\end{equation}
Choosing $r_i=|x_{\ell_i}|$ and $s_i=|b_i^\sharp|$, we see that there exists  $m=1,\dots,p$ such that
 $$
 \frac{\abs{x_{\ell_1}\cdots  x_{\ell_p}}}{\abs{(a_1^{j_1N_i-k_1}a_2^{j_2N_i-k_2}\cdots a_t^{j_tN_i-k_t})^{\sharp}}}\geq\frac{\abs{x_{\ell_m}}}{\abs{b_m^{\sharp}}}\text{.}
 $$
 Hence,
 $$
 \frac{\abs{x_i}}{\abs{(a_1^{j_1}a_2^{j_2}\cdots a_t^{j_t})^{{\sharp}}}}> \frac{\abs{x_{\ell_m}}}{\abs{b_m^{\sharp}}},
 $$
 which translates to
 $$
 x_{\ell_m}\otimes b_m^{\sharp}\prec x_i\otimes(a_1^{j_1}a_2^{j_2}\cdots a_t^{j_t})^{\sharp}.
 $$
 In view of (\ref{terminando1}) this implies that
 \begin{equation}\label{uno}
  \rho^{-1}[\Gamma\otimes \alpha]\in I.
\end{equation}

Next, we focus on $\rho^{-1}[\Theta_i\otimes \alpha]$. Modulo scalars, write
$
\Theta_i=\sum \, x_{\ell_1}\cdots  x_{\ell_p}$, with $\ell_1<i$, so that,
$$
\rho^{-1}[x_{\ell_1}\cdots x_{\ell_p}\otimes\alpha]=\sum (x_{\ell_1}\otimes b_1^{\sharp})\cdots(x_{\ell_p}\otimes b_{p}^{\sharp})\text{,}
$$
where the summation is taken over families $\{b_1,\dots,b_p\}$ of $\base_A$ such that
$
b_1\cdots b_p=a_1^{j_1N_i}\cdots a_t^{j_tN_i}.
$
For each summand, applying again (\ref{trivialidad}) choosing $r_i=|x_{\ell_i}|$ and $s_i=|b_i^\sharp|$, we obtain that either:

 There exists some $m=1,\dots,p$ such that
 $$
 \frac{\abs{x_i}}{\abs{(a_1^{j_1}a_2^{j_2}\cdots a_t^{j_t})^{{\sharp}}}}=\frac{\abs{x_i^{N_i}}}{\abs{(a_1^{j_1N_i}a_2^{j_2N_i}\cdots a_t^{j_tN_i})^{{\sharp}}}}>\frac{\abs{x_{\ell_{m}}}}{\abs{b_m^{\sharp}}},
 $$
 in which case
 $
 x_{\ell_{m}}\otimes b_m^{\sharp}\prec x_i\otimes(a_1^{j_1}a_2^{j_2}\cdots a_t^{j_t})^{\sharp};
 $
or else,
for every $m=1,\dots,p$,
 $$
 \frac{\abs{x_i}}{\abs{(a_1^{j_1}a_2^{j_2}\cdots a_t^{j_t})^{{\sharp}}}}=\frac{\abs{x_i^{N_i}}}{\abs{(a_1^{j_1N_i}a_2^{j_2N_i}\cdots a_t^{j_tN_i})^{{\sharp}}}}=\frac{\abs{x_{\ell_{m}}}}{\abs{b_m^{\sharp}}}.
 $$
 However, as $\ell_1<i$, we obtain that
 $
 x_{\ell_{1}}\otimes b_1^{\sharp}\prec x_i\otimes(a_1^{j_1}a_2^{j_2}\cdots a_t^{j_t})^{\sharp}.
 $
Thus, we conclude that
 \begin{equation}\label{dos}
 \rho^{-1}[\Theta_i\otimes\alpha]\in  I.
 \end{equation}

Finally, we analyze
$$\rho^{-1}[x_i^{N_i}\otimes\alpha]=\rho^{-1}[x_i^{N_i}\otimes(a_1^{j_1N_i}\cdots a_t^{j_tN_i})^{\sharp}]=\sum (x_i\otimes b_1^{\sharp})\cdots(x_i\otimes b_{N_i}^{\sharp})\text{,}$$
where the summation is taken over families  $\{b_1,\dots,b_{N_i}\}$ of $\base_A$ such that
$
b_1\cdots b_{N_i}=a_1^{j_1N_i}\cdots a_t^{j_tN_i}.
$
For each summand, applying again (\ref{trivialidad}) above, we assert that either:

There  exists some  $m=1,\dots,N_i$ such that
 $$
 \frac{\abs{x_i}}{\abs{(a_1^{j_1}a_2^{j_2}\cdots a_t^{j_t})^{{\sharp}}}}=\frac{\abs{x_i^{N_i}}}{\abs{(a_1^{j_1N_i}a_2^{j_2N_i}\cdots a_t^{j_tN_i})^{{\sharp}}}}>\frac{\abs{x_i}}{\abs{b_m^{\sharp}}},
 $$
  in which case
  $
  x_i\otimes b_m^{\sharp}\prec x_i\otimes(a_1^{j_1}a_2^{j_2}\cdots a_t^{j_t})^{\sharp};
  $
or else,
  for every $m=1,\dots,N_i$,
  $$
  \frac{\abs{x_i}}{\abs{(a_1^{j_1}a_2^{j_2}\cdots a_t^{j_t})^{{\sharp}}}}=\frac{\abs{x_i^{N_i}}}{\abs{(a_1^{j_1N_i}a_2^{j_2N_i}\cdots a_t^{j_tN_i})^{{\sharp}}}}=\frac{\abs{x_i}}{\abs{b_m^{\sharp}}}\text{.}
  $$
 In this case, as $
b_1\cdots b_{N_i}=a_1^{j_1N_i}\cdots a_t^{j_tN_i},
$
the inequality $(a_1^{j_1}\cdots a_t^{j_t})^{{\sharp}}<_{A^{\sharp}}b_m$ cannot hold for every $m=1,\dots,N_i$.

Hence there are two possibilities: one is that $b_m^{\sharp}<_{A^{\sharp}}(a_1^{j_1}\cdots a_t^{j_t})^{{\sharp}}$ for some  $m=1,\dots,N_i$ in which case
$
x_i\otimes b_m^{\sharp}\prec x_i\otimes(a_1^{j_1}a_2^{j_2}\cdots a_t^{j_t})^{\sharp}.
$ The other is that, for every $m=1,\dots, N_i$, $b_m^\sharp=(a_1^{j_1}\cdots a_t^{j_t})^{{\sharp}}$. This only occurs when the considered summand is
$
(x_i\otimes (a_1^{j_1}a_2^{j_2}\cdots a_t^{j_t})^{\sharp})^{N_i}.
$
Hence, we conclude that
\begin{equation}\label{tres}
\rho^{-1}[x_i^{N_i}\otimes\alpha]=(x_i\otimes (a_1^{j_1}a_2^{j_2}\cdots a_t^{j_t})^{\sharp})^{N_i}+\Upsilon
\end{equation}
where
$
\Upsilon\in I\{ v\otimes\beta\in\mathfrak{B},\quad v\otimes\beta\prec x_i\otimes(a_1^{j_1}a_2^{j_2}\cdots a_t^{j_t})^{{\sharp}}\}\text{.}
$

From (\ref{uno}), (\ref{dos}) and (\ref{tres}) the lemma follows and Theorem \ref{principal1} is proved.
\end{proof}

\section{Tori and circle actions}
In this section we present some consequences and examples of the results in the past section when $G$ is a torus. We begin by a consequence of Theorem \ref{principal1} for circle actions.

 \begin{corollary}\label{corolario1}
 Let $X$ be a nilpotent finite complex in which $\bs^1$ acts. Then, $X$ is elliptic if and only if each component of the homotopy fixed point set $X_\bq^{h\bs^1}$ has finite Lusternik-Schnirelmann category.
 \end{corollary}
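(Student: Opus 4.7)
The forward implication is a direct application of Theorem~\ref{principal1}: if $X$ is elliptic, then each path component of $X_\bq^{h\bs^1}$ is again elliptic, and invoking the classical bound $\cat(Y)\le\dim V^{\mathrm{odd}}$ for any elliptic $Y$ with minimal Sullivan model $(\Lambda V,d)$ (see \cite[Thm.~32.6]{fehatho}), each such component has finite Lusternik--Schnirelmann category.

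For the converse, since $X$ is a finite CW-complex, $H^*(X;\bq)$ is automatically finite-dimensional, so the content is to deduce $\dim\pi_*(X)\otimes\bq<\infty$. I would argue by contraposition: assume $X_\bq$ is rationally hyperbolic, and exhibit a path component of $X_\bq^{h\bs^1}$ of infinite LS category. Fixing a section $\sigma$ of the Borel fibration $\xi\colon (X_\bq)_{h\bs^1}\to B\bs^1$ (arising, for instance, from an $\bs^1$-fixed point, or the constant section in the trivial-action case so that the component of constants in $\map(\bc P^\infty,X_\bq)$ is available), one applies Theorem~\ref{thm::modeloComponenteEspacioSecciones} to the truncations of Theorem~\ref{modelotruncado} to obtain an explicit Sullivan model $(\Lambda(\overline{V\otimes A_N^\sharp}^1\oplus(V\otimes A_N^\sharp)^{\geq 2}),d_\varphi)$ for each approximation $\sec_{\sigma_n}\xi_n$, with the whole component $\sec_\sigma\xi$ recovered as the inverse limit of these.

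The heart of the argument, and the main obstacle, is extracting from this inverse system an invariant of some component that forces infinite LS category --- for instance an unbounded cup-length, or equivalently an unbounded Toomer invariant, in $H^*(\sec_\sigma\xi;\bq)$. Because $V$ is infinite-dimensional with generators of unbounded degree under the hyperbolicity assumption, the basis elements $v\otimes\alpha$ contribute unboundedly many generators to each truncated section-space model; the delicate point is to control cancellation in the twisted differential $d_\varphi$ and to verify that the resulting infinitude of nontrivial cohomology survives to the inverse limit in at least one path component, thereby contradicting the finite-cat hypothesis.
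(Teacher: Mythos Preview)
Your forward implication is correct and matches the paper's argument exactly: Theorem~\ref{principal1} gives ellipticity of each component, and elliptic spaces have finite rational LS category.

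For the converse, however, your proposal has a genuine gap. You correctly set up the contrapositive and identify the right object to study (a Sullivan model of a component of $\sec\xi$ built from the truncations), but you then stop precisely at the hard part. The sentences beginning ``The heart of the argument, and the main obstacle\ldots'' and ``the delicate point is to control cancellation\ldots'' are an honest description of work still to be done, not a proof. Showing that hyperbolicity of $X$ forces unbounded cup-length (or Toomer invariant) in some component of $X_\bq^{h\bs^1}$, while managing the twisted differential $d_\varphi$ and the passage to the inverse limit, is a substantial computation; nothing in your outline explains why the infinitely many generators $v\otimes\alpha$ actually produce nontrivial products surviving in cohomology rather than being killed off.

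The paper does not carry out this computation here either: it simply invokes \cite[Thm.~5]{bufemu2}, where exactly this result---that for a hyperbolic finite complex the components of $X_\bq^{h\bs^1}$ have infinite LS category---is established. So the converse in this corollary is a citation, not a new argument. Your instinct about the shape of the proof is right, but to complete it you would need to reproduce (or cite) that earlier work. Note also the paper's caveat that in the hyperbolic case $X_\bq^{h\bs^1}$ need not be a CW-complex or nilpotent, so even making sense of ``LS category of a component'' requires some care that your sketch does not address.
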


 \begin{proof}
By Theorem \ref{principal1}, if $X$ is ellptic, then each component of $X_\bq^{h\bs^1}$ is also elliptic and in particular has finite LS-category. On the other hand, if $X$ is hyperbollic, Theorem 5 of \cite{bufemu2} guarantees that the LS-category of such a component is infinite. Note that in this case $X_\bq^{h\bs^1}$ may not be a CW-complex, nor nilpotent.
\end{proof}

We now describe explicitly the homotopy type of the homotopy fixed point set of certain tori and circle actions. In the remaining of the section any considered action has fixed points.

\begin{theorem}\label{accionenesfera}
Given an $\bs^1$ action  in the rational $n$-sphere $\bs^n_{\bq}$:
\begin{itemize}
\item[(i)] If $n$ is odd,
 $$
 {\bs_\bq^n}^{h\bs^1}\simeq \map\left({\bc P}^{\frac{n+1}{2}},\bs^n\right)_\bq \simeq_\bq \bs^1\times\bs^3\times\cdots\times\bs^n.
 $$
\item[(ii)] If $n$ is even,  ${\bs^n_\bq}^{h\bs^1}$ is   either path component and of the rational homotopy type of
    $$
    \bs^1\times\frac{SO(n+2)}{U\left(\frac{n+2}{2}\right)},
$$
or else, it has two components, each of them of the rational homotopy type of
    $$
    \bs^{n+1}\times\bs^{n+3}\times\cdots\times\bs^{2n-1}.
     $$
\end{itemize}
\end{theorem}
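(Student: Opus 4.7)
The plan is to identify $(\bs^n_\bq)^{h\bs^1}$ with the space of sections of the Borel fibration $\xi\colon(\bs^n_\bq)_{h\bs^1}\to B\bs^1=\bc P^\infty$ and compute via Haefliger's model. Because $\pi_{\ge n+1}\bs^n_\bq=0$ for $n$ odd and $\pi_{\ge 2n}\bs^n_\bq=0$ for $n$ even, Theorem~\ref{modelotruncado} lets me replace $B\bs^1$ by the corresponding skeleton $\bc P^{(n+1)/2}$ or $\bc P^n$. Next I classify the possible relative Sullivan models $(\Lambda u,0)\to(\Lambda u\otimes\Lambda V,D)$ of $\xi$, with $|u|=2$ and $(\Lambda V,d)$ the minimal model of $\bs^n$, and reduce to a Sullivan computation via Theorems~\ref{thm::modeloEspacioSecciones} and~\ref{thm::modeloComponenteEspacioSecciones}.

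For $n$ odd, $V=\bq\cdot y$ with $|y|=n$; parity forces $Dy\in(\Lambda u)^{n+1}=0$, so the Borel fibration is rationally trivial and $(\bs^n_\bq)^{h\bs^1}\simeq\map(\bc P^{(n+1)/2},\bs^n)_\bq$. Haefliger's model with $A=\bq[u]/u^{(n+3)/2}$ has zero differential and generators $y\otimes(u^i)^\sharp$ in the odd degrees $n,n-2,\dots,-1$; the component model of Theorem~\ref{thm::modeloComponenteEspacioSecciones} discards the single negative-degree generator and produces the minimal model of $\bs^1\times\bs^3\times\cdots\times\bs^n$. For $n$ even, $V=\bq\langle x,y\rangle$ with $|x|=n$, $|y|=2n-1$, $dy=x^2$; parity forces $Dx=0$, while $Dy=x^2+bxu^{n/2}+cu^n$. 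The substitution $x\mapsto x+\tfrac{b}{2}u^{n/2}$ normalizes $b=0$, and a section corresponds to an augmentation $\varphi(x)=\lambda u^{n/2}$, $\varphi(y)=0$ with $\lambda^2+c=0$. Under the hypothesis that fixed points exist, either $c=0$ (one section, so $(\bs^n_\bq)^{h\bs^1}$ is connected) or $c=-\lambda_0^2\ne 0$ (two sections, so two components); in the latter case $x\mapsto x\mp\lambda_0u^{n/2}$ normalizes each section to zero and transforms $Dy$ into $x^2\mp 2\lambda_0 xu^{n/2}$.

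With $A=\bq[u]/u^{n+1}$ and $x_i=x\otimes(u^i)^\sharp$, $y_i=y\otimes(u^i)^\sharp$, the diagonal $\Delta(u^i)^\sharp=\sum_{j+k=i}(u^j)^\sharp\otimes(u^k)^\sharp$ gives, in the component model at the origin section,
\[
\widetilde d(y_i)=\sum_{\substack{j+k=i\\ j,k<n/2}} x_jx_k\ \mp\ 2\lambda_0\, x_{i-n/2},
\]
with the linear term present only when $c\ne 0$ and $i\ge n/2$, and generators in non-positive degrees killed by $\varphi$. When $c\ne 0$, iteratively eliminating the acyclic pairs $(y_{k+n/2},x_k)$ from the highest $k$ downward (absorbing the $x$-squared terms at each step) leaves closed generators $y_0,\dots,y_{n/2-1}$ in odd degrees $n+1,n+3,\dots,2n-1$, yielding the minimal model of $\bs^{n+1}\times\bs^{n+3}\times\cdots\times\bs^{2n-1}$. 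When $c=0$, the degree-$1$ generator $y_{n-1}$ is closed (its defining sum being empty) and furnishes the $\bs^1$-factor, while the remaining quadratically-twisted Sullivan algebra is to be identified with the known minimal Sullivan model of the Hermitian symmetric space $SO(n+2)/U((n+2)/2)$. The main obstacle will be this last identification: recognizing the Haefliger Koszul-type CDGA as the Sullivan model of the Hermitian symmetric space requires the formality of $SO(2m)/U(m)$, careful manipulation of the quadratic relations arising from $Dy=x^2$ together with the coalgebra structure of $A^\sharp$, and comparison with the explicit presentation of $H^*(SO(2m)/U(m);\bq)$ via Chern classes modulo Pontryagin-class and Euler-class relations.
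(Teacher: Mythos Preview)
Your proof follows essentially the same route as the paper's: truncate the Borel fibration, classify the possible relative Sullivan models, and compute the Haefliger model componentwise. The only cosmetic difference is your normalization in~(ii)---you first complete the square to $Dy=x^2+cu^n$ and then shift each section to the origin, whereas the paper immediately normalizes one section to zero and writes $Dy=x^2+\lambda a^{n/2}x$; the two are equivalent and lead to the identical acyclic-pair elimination. Regarding your ``main obstacle'' (identifying the connected-case CDGA with the model of $\bs^1\times SO(n+2)/U((n+2)/2)$), the paper does not carry this out either but simply cites an external reference, so you need not perform the formality/Chern-class comparison by hand.
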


\begin{proof} (i) In view of Remark \ref{remarko},
$$
{\bs^n_\bq}^{h\bs^1}\cong \sec\xi \simeq\sec \xi_{n+1}\simeq \sec{\xi_{n+1}}_\bq
 $$
 where
 $$
\bs^n_\bq\longrightarrow (\bs^n_\bq)_{h\bs^1}\stackrel{\xi}{\longrightarrow} {\bc P}^\infty
$$
is the corresponding Borel fibration. Note that $\xi_{n+1}$ is modeled by the trivial algebraic fibration
$$
(A,0)\longrightarrow (A\otimes\Lambda x,0)\longrightarrow (\Lambda x,0)
$$
where $A=(\Lambda a)/a^{\frac{n+1}{2}}$, $|a|=2$ and $|x|=n$. Thus,
$$
{\bs^n_\bq}^{h\bs^1}\simeq\sec \xi_{n+1}\simeq \mbox{map}\,(\mathbb CP^{\frac{n+1}{2}}, {\bs}^n)_{\mathbb Q}\,.$$
 Now since $n$ is odd, ${\bs}^n_{\mathbb Q}$ is an H-space, and  so is ${\bs}_{\mathbb Q}^{n\, h{\bs}^1}$. On the other hand,   a direct computation  shows that the minimal model for the mapping space is
 $$\left( \Lambda (x\otimes 1, x\otimes a^{\sharp}, \cdots , x\otimes (a^{\frac{n-1}{2}})^{\sharp}),0\right)\,.$$
 We observe that this is also the minimal model for the product ${\bs}^1\times {\bs}^3\times \cdots \times {\bs}^n$, and this gives the result.

 Note that a basis for the rational homotopy groups of ${\bs}_{\mathbb Q}^{n\, h{\bs}^1}$ is given by the maps
 $$\varphi_q \colon {\bs}^{n-2q}\to {\bs}_{\mathbb Q}^{n\, h{\bs}^1},\ \ q= 0, \cdots , \frac{n-1}{2},$$ whose adjoints are
given by  the following compositions:
 $${\bs}^{n-2q}\times \mathbb CP^{\frac{n+1}{2}} \to {\bs}^{n-2q}\times \mathbb CP^{\frac{n+1}{2}} \, / (*\times \mathbb CP^{\frac{n+1}{2}}) = \vee_{r=0}^{\frac{n+1}{2}} {\bs}^{n-2q+2r} \stackrel{p_q}{\to} {\bs}^n \,.$$
Here $p_q$ denotes the projection on the component $r=q$.
\smallskip

(ii)  Again by Remark \ref{remarko}, as $\pi_{\ge 2n}(\bs^n_\bq)=0$,
$$
{\bs^n_\bq}^{h\bs^1}\cong\Sec\xi \simeq\Sec \xi_{2n}\simeq \Sec{\xi_{2n}}_\bq.
 $$
Here,
$\xi_{2n}$ is modeled by
$$
(A,0)\longrightarrow (A\otimes\Lambda (x,y),D)\longrightarrow (\Lambda (x,y),d)
$$
where $
A=(\Lambda a)/a^{n+1}$, $a,x,y$ are of degree $2,n,2n-1$ respectively, , $Dx=0$ and $\displaystyle Dy=x^2+\lambda a^{\frac{n}{2}}x$,  $\lambda\in\bq$.

\smallskip

(ii.1) If $\lambda=0$ then $\xi_{2n}$ is trivial and
$$
{\bs^n_\bq}^{h\bs^1}\simeq \mbox{map}\,(\mathbb CP^{n}, {\bs}^n)_{\mathbb Q}\,.$$
A straightforward computation shows that this mapping space has a model of the form
$$(\Lambda (\, (x_s)_{1\leq s\leq n/2}\,, \, (y_r)_{1\leq r\leq n}\, ),d)$$
with $\vert x_s\vert = 2s$, $\vert y_r\vert = 2r-1$, $dy_1=0$, $dx_s=0$ and for $r>1$, $dy_r = \sum_{s+t=r} x_sx_t$.
This is the model of ${\bs}^1\times SO(n+2)/U(\frac{n+2}{2})$ \cite[Thm. 2.iii.5]{mu}.

\smallskip

(ii.2) If $\lambda\not=0$, the truncated Borel fibration $\xi_{2n}$ has two non homotopic sections $\sigma,\tau$ which corresponds to the only two possible retractions of its model,
$$
\varphi_\sigma,\varphi_{\tau}\colon  (A\otimes\Lambda (x,y),D)\to (A,0),\qquad \varphi_\sigma(x)=0,\quad \varphi_\tau(x)=-\lambda.
$$
A direct computation shows that the model of $\sec_\sigma \xi_{2n}$ is of the form,
$$
\bigl(\Lambda(x,x\otimes,a^\sharp,\dots, x\otimes (a^{n/2})^\sharp, y,y\otimes,a^\sharp,\dots, x\otimes (a^{n-1})^\sharp),\widetilde d\,\bigr)
$$
where the linear part of the differential satisfies,
$$\widetilde d_1(y\otimes (a^{n/2})^{\sharp}) = \lambda x\,, \cdots , \widetilde d_1(y\otimes (a^{n-1})^{\sharp}) = \lambda x\otimes (a^{N/2 - 1})^{\sharp}\,.$$
Therefore, if we quotient this model by the acyclic ideal generated by the variables $(x\otimes (a^r)^{\sharp})$ and by $(y\otimes (a^s)^{\sharp})$ for $n/2\leq s\leq n-1$, we obtain a model with differential zero
 $$(\Lambda (y, y\otimes a^{\sharp}, \cdots , y\otimes  (a^{n/2-1})^{\sharp}), 0)\,.$$ This implies the result.

 The same conclusion is obtained for the model of $\sec_\tau \xi_{2n}$ replacing $\lambda$ by $-\lambda$.

\end{proof}

\begin{theorem}\label{torito}
The Borel fibration of any torus $\bt^m$  action in an odd dimensional rational sphere $\bs_\bq^n$ is homotopically trivial. In particular,
$$
{\bs_\bq^n}^{h\bt^m}
\simeq
\map\bigl(({\bc P}^\infty\times\stackrel{m}{\cdots}\times{\bc P}^\infty)^{(n+1)} ,\bs^n_\bq\bigr)
,$$ which has the rational homotopy type of a finite product of odd dimensional spheres.
\end{theorem}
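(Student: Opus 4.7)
The plan is straightforward once one recognizes that for $n$ odd the rational sphere $\bs^n_\bq$ is a $K(\bq,n)$ with minimal model $(\Lambda x,0)$, $|x|=n$, and $\pi_{>n}(\bs^n_\bq)=0$. By Remark \ref{remarko} this gives ${\bs^n_\bq}^{h\bt^m}\simeq \sec\xi_{n+1}$, where $\xi_{n+1}$ is the pullback of the Borel fibration $\xi\colon (\bs^n_\bq)_{h\bt^m}\to B\bt^m$ over the $(n+1)$-skeleton $((\bc P^\infty)^m)^{(n+1)}$.

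The crucial step is to establish that $\xi$ itself is rationally trivial. Modeling $\xi$ by a relative Sullivan algebra
$$
(A,0)\longrightarrow (A\otimes\Lambda x,D)\longrightarrow (\Lambda x,0),
$$
with $A=\Lambda(a_1,\ldots,a_m)$, $|a_i|=2$ the minimal model of $B\bt^m$, the fixed-point hypothesis produces a section of $\xi$, modeled by a retraction $\varphi\colon (A\otimes\Lambda x,D)\to(A,0)$. Since $A$ is concentrated in even degrees and $|x|=n$ is odd, $\varphi(x)\in A^n=0$. Then, from $\varphi$ being a CDGA morphism into an algebra with zero differential, we obtain $Dx=\varphi(Dx)=d\varphi(x)=0$, so the model of $\xi$ has trivial total differential. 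In particular $\xi_{n+1}$ is trivial and
$$
\sec\xi_{n+1}\simeq\map\bigl(((\bc P^\infty)^m)^{(n+1)},\bs^n_\bq\bigr).
$$

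Next, by Theorem \ref{modelotruncado} a model for this mapping space is $(\Lambda(V\otimes A_{n+1}^\sharp),\widetilde d)$ with $V=\bq\{x\}$ and $A_{n+1}$ the truncation of $A$ at total degree $n+1$. Since both $D$ and the dual differential $\delta$ on $A^\sharp$ vanish, the construction of $\widetilde d$ via the isomorphism $\rho$ of (\ref{ecuation:definicionRho}) forces $\widetilde d=0$. Every generator $x\otimes\alpha$ has odd degree $n-|\alpha|$, because $|\alpha|$ is even. Passing to a path component via the $K_\varphi$-quotient of Theorem \ref{thm::modeloComponenteEspacioSecciones} at the augmentation induced by the fixed-point section kills the finitely many negative-degree generators (those with $|\alpha|=n+1$), leaving an exterior algebra on finitely many odd-degree generators in degrees $1,3,\ldots,n$ with zero differential. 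This is precisely the minimal Sullivan model of a finite product of odd-dimensional spheres.

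The main obstacle, modest as it is, lies in the verification that $\widetilde d=0$, which requires inspecting the construction in Section 2 for the mapping-space differential on $\Lambda(A\otimes\Lambda V\otimes A^\sharp)/J$; this becomes immediate once one notes that both $D$ and $\delta$ are zero, so the total differential on the ambient algebra already vanishes before quotienting. It is also worth remarking that $H^n\bigl(((\bc P^\infty)^m)^{(n+1)};\bq\bigr)=0$ for $n$ odd, so the mapping space is rationally connected and the theorem's statement genuinely describes the whole homotopy fixed point set rather than just a single component.
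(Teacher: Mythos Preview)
Your proof is correct and follows essentially the same route as the paper's (which is very terse and simply refers back to the $m=1$ case in Theorem~\ref{accionenesfera}(i)). You supply more justification than the paper does: in particular, your parity-plus-retraction argument explaining why $Dx=0$ makes explicit what the paper merely asserts when it declares the algebraic fibration ``trivial''. One small point worth stating for completeness: the equality $Dx=\varphi(Dx)$ uses that $Dx$ already lies in $A$, which follows since $x^2=0$ and $A^1=0$ force $Dx\in A^{n+1}$; once this is noted your chain of equalities is immediate.
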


\begin{proof}
This is a generalization of Theorem \ref{accionenesfera} (i). Indeed,  a direct computation shows that a model for the homotopy fixed point set is given by
$\bigl( \Lambda (x\otimes J),0\bigr)$
with $x$ in degree $n$ and
$$J=\{(a_1^{i_1}a_2^{i_2}\cdots a_n^{i_m})^{\sharp}, \,\, \sum i_s \leq \frac{n-1}{2}\,\}.$$ This gives the result.
\end{proof}

\begin{theorem}\label{complejito}
Given an  $\bs^1$-action  in the complex projective space  $\bc P^n_\bq$, the homotopy fixed point set  ${\bc P^n_\bq}^{\,h\bs^1}$ has at most $n+1$ path components and each of them is of the rational homotopy type of one of the following spaces:
$$
\begin{aligned}
&\quad\bs^3\times\bs^5\times\dots\times\bs^{2n+1},\\
\bs^{1}\times&{\bc P}^1\times\bs^{5}\times\dots\times\bs^{2n+1},\\
\bs^{1}\times&\bs^3\times{\bc P}^2\times\dots\times\bs^{2n+1},\\
\dots&\\
\bs^{1}\times& \bs^3\times\dots\times {\bc P}^{n-1}\times\bs^{2n+1},\\
\bs^{1}\times &\bs^3\times\dots\times \bs^{2n-1}\times{\bc P}^n.\\
\end{aligned}
$$
\end{theorem}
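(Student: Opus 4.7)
The plan is to write down an explicit relative Sullivan model of the Borel fibration, use it to enumerate the sections algebraically, and then invoke Theorem~\ref{thm::modeloComponenteEspacioSecciones} to pin down the rational homotopy type of each path component of $\sec\xi$, where
$$\bc P^n_\bq\to(\bc P^n_\bq)_{h\bs^1}\stackrel{\xi}{\to}B\bs^1$$
is the Borel fibration. Throughout I will work with the minimal model $(\Lambda(x,y),d)$ of $\bc P^n$, with $|x|=2$, $|y|=2n+1$, $dy=x^{n+1}$, and with the model $(\Lambda a,0)$ of $B\bs^1$, $|a|=2$.

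First I pin down $D$. Any relative Sullivan model of $\xi$ has the form $(\Lambda a\otimes\Lambda(x,y),D)$ with $Dx=\lambda a$ and $Dy=\sum_{j=0}^{n+1}\alpha_j a^j x^{n+1-j}$, $\alpha_0=1$, for some $\lambda,\alpha_j\in\bq$. The coefficient of $ax^n$ in $D^2y$ equals $(n+1)\lambda$, forcing $\lambda=0$ and thus $Dx=0$. Since $\pi_{\ge 2n+2}(\bc P^n_\bq)=0$, Remark~\ref{remarko} and Theorem~\ref{modelotruncado} let me pass to the truncated base $A_N=\Lambda a/(a^{n+2})$ without changing $\sec\xi$ up to homotopy. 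By the retraction/section correspondence in \cite[\S4]{bumu1}, each component arises from some retraction $\varphi_\mu\colon(A_N\otimes\Lambda(x,y),D)\to(A_N,0)$; by degree, $\varphi_\mu(y)=0$ and $\varphi_\mu(x)=\mu a$ for a scalar $\mu$, and the chain condition $\varphi_\mu(Dy)=0$ becomes exactly $\mu^{n+1}+\alpha_1\mu^n+\cdots+\alpha_{n+1}=0$, i.e.\ $\mu$ is a rational root of $P(t)=\sum_{j=0}^{n+1}\alpha_j t^{n+1-j}$. Since $\deg P=n+1$, the number of components is at most $n+1$.

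Next I compute the component corresponding to a root $\mu$ of multiplicity $m\in\{1,\dots,n+1\}$. The change of basis $X=x-\mu a$ satisfies $DX=0$ and $\varphi_\mu(X)=0$; Taylor expansion gives
$$Dy=\sum_{k\ge m}\gamma_k X^k a^{n+1-k},\qquad \gamma_k=\frac{P^{(k)}(\mu)}{k!},\quad\gamma_m\ne 0.$$
Theorem~\ref{thm::modeloComponenteEspacioSecciones} then provides a model of the component with generators $e=X\otimes 1^\sharp$ in degree $2$ and $y_j=y\otimes(a^j)^\sharp$ in degree $2n+1-2j$ for $j=0,\dots,n$. Using the coproduct $\Delta((a^j)^\sharp)=\sum_{p+q=j}(a^p)^\sharp\otimes(a^q)^\sharp$ together with the identifications $X\otimes(a^p)^\sharp\equiv 0$ for every $p\ge 1$ (which are imposed by $K_{\varphi_\mu}$ for $p=1$ and by negative-degree generators for $p\ge 2$), a direct expansion of $\rho^{-1}$ yields
$$\widetilde d(y_j)=\gamma_{n+1-j}\,e^{n+1-j}.$$

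Finally I simplify. Clearly $\widetilde d(y_j)=0$ for $j>n+1-m$, $\widetilde d(y_{n+1-m})=\gamma_m e^m\ne 0$, and for each $j<n+1-m$ the substitution $y'_j=y_j-(\gamma_{n+1-j}/\gamma_m)e^{n+1-j-m}y_{n+1-m}$ yields a closed generator. In this new basis the model splits as the tensor product
$$\bigl(\Lambda(e,y_{n+1-m}),\,\widetilde d y_{n+1-m}=\gamma_m e^m\bigr)\otimes\bigl(\Lambda(y'_j)_{j\ne n+1-m},0\bigr),$$
whose first factor is the minimal model of $\bc P^{m-1}$ and whose second factor is the minimal model of the product of odd spheres $\bs^1\times\bs^3\times\cdots\times\bs^{2n+1}$ with the factor $\bs^{2m-1}$ removed. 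Setting $i=m-1$ recovers exactly the $(i+1)$-st rational homotopy type listed in the statement. The main obstacle is establishing the formula $\widetilde d(y_j)=\gamma_{n+1-j}e^{n+1-j}$: unwinding $\rho^{-1}$ one must verify that the only monomial $(X\otimes(a^{i_1})^\sharp)\cdots(X\otimes(a^{i_k})^\sharp)$ surviving modulo $K_{\varphi_\mu}$ is the one with all $i_\ell=0$, which forces $\sum i_\ell=0$ and hence the single surviving term in the Taylor expansion of $Dy$ has $k=n+1-j$.
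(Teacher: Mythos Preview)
Your proof is correct and follows the same strategy as the paper: truncate the Borel fibration at level $2n+2$, enumerate retractions as the rational roots of a degree-$(n+1)$ polynomial, and then invoke Theorem~\ref{thm::modeloComponenteEspacioSecciones} to read off the model of each component. Your treatment is in fact sharper than the paper's sketch: by rewriting $Dy$ in the shifted variable $X=x-\mu a$ you identify the coefficients as Taylor coefficients $\gamma_k=P^{(k)}(\mu)/k!$, so that the rational homotopy type of the component at $\mu$ is governed precisely by the multiplicity $m$ of $\mu$ as a root of $P$, a point the paper leaves implicit (it only asserts $\widetilde d(y\otimes(a^i)^\sharp)=\mu_i x^{i}$ ``for some scalars'' and then case-splits on the first nonzero $\mu_i$).
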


\begin{proof} As $\pi_{\ge 2n+2}(\bc P^n_\bq)=0$,
$$
{\bc P^n_\bq}^{\,h\bs^1}\simeq \sec \xi\simeq \sec \xi_{2n+2}\simeq\sec{\xi_{2n+2}}_\bq.
$$
where
$$
\bc P^n_\bq\longrightarrow (\bc P^n_\bq)_{h\bs^1}\stackrel{\xi}{\longrightarrow}\bc P^\infty,
$$
is the corresponding Borel fibration. A model of $\xi_{2n+2}$ is
$$
(A,0)\longrightarrow(A\otimes \Lambda(x,y),D)\longrightarrow (\Lambda (x,y),d),
$$
where $
A=(\Lambda a)/a^{n+2}$, $|a|=|x|=2$, $|y|=2n+1$, and
 $$
 Dx=0,\qquad Dy=x^{n+1}+\sum_{j=1}^{n}\lambda_ja^jx^{n+1-j},\quad\lambda_j\in\bq,\,\, j=1,\dots,n.
 $$
 Non homotopic sections of $\xi_{2n+2}$ correspond to non homotopic retractions
  $$
  (A\otimes \Lambda(x,y),D)\longrightarrow (A,0)
  $$
  each of which is uniquely determined by the scalar $\mu\in\bq$ for which $\varphi(x)=\mu a$. However,
  the equation $\varphi(Dy)=0$ reduces the number of these scalars  to the set of rational solutions of the equation
  $$
  z^{n+1}\sum_{j=1}^{n}\lambda_jz^{n+1-j}=0.
  $$
This proves that ${\bc P^n_\bq}^{\,h\bs^1}$ has at most $n+1$ components.

On the other hand, a direct computation shows that any the resulting path component of $\sec \xi_{2n+2}$ has a model of the form
 $$
 \bigl(\Lambda (x,y,y\otimes a^\sharp,\dots,y\otimes (a^n)^\sharp,d\bigr),\qquad d y\otimes (a^i)^\sharp=\mu_i x^i,\quad i=1,\dots,n,
$$
for some scalars $\mu_i\in \bq$, $i=1,\dots,n$.
\smallskip

 If $\mu_1\not=0$ this is a model of
$$
\bs^{3}\times\bs^{5}\times\cdots\times\bs^{2n+1}\text{.}
$$
\smallskip

More generally, if $\mu_1=\dots=\mu_{i-1}=0$ and $\mu_i\not=0$ this is a model of
$$
\bs^{1}\times\dots\times \bs^{2i-3}\times{\bc P}^n\times \bs^{2i+1}\times\cdots\times\bs^{2n+1}\text{.}
$$
\smallskip

Finally, if $\lambda_i=0$ for all $i=1,\dots,n$, the above is a model of

$$
\bs^{1}\times\bs^3\times\bs^5\times\cdots\times\bs^{2n-1}\times{\bc P}^n\text{.}
$$

\end{proof}
\section{The inclusion $
k\colon X^G\hookrightarrow X^{hG}
$}

Let $X$ be a $G$-space for which $X^G\not=\phi$.  The equivariant map $
X^G\hookrightarrow X
 $ induces a map between the corresponding Borel constructions,
\begin{equation}\label{diagramaprin}
\xymatrix{
X^G\ar@{^(->}[rr] \ar[d]&&X\ar[d]\\
BG\times X^G\ar[rd]_\eta\ar[rr]^(.52)\gamma&&X_{hG}\ar[ld]^\xi\\
&BG.&
}
\end{equation}
Then, the fundamental inclusion from the fixed point set into the homotopy fixed point is identified with
$$
 k\colon X^G\hookrightarrow \map(BG,X^G)\stackrel{\gamma_*}{\longrightarrow}\sec \xi\cong X^{hG}.
$$
 where $X^G\hookrightarrow \map(BG,X^G)$ and $\gamma_*\colon \sec \eta\to \sec \xi$ are induced by $BG\to *$ and $\gamma$ respectively.

From now on, to avoid excessive notation, the $G$-space $X$  will  be already a rational nilpotent space with $\dim \pi_*(X)<\infty$, in particular any elliptic space. Moreover, $X^G$ will be assumed to be of the homotopy type of a path connected (otherwise, everything which follows can be applied to any of its path components) nilpotent CW-complex of finite type (over $\bq$) and with rational homotopy groups of total finite dimension.  In some cases, for tori for instance as we shall see later, the fact that  $\dim \pi_*(X^G)\otimes\bq<\infty$ is deduced from $\dim \pi_*(X)<\infty$. In general however, not much is known from the homotopical behaviour of $X^G$ unless some constrains are imposed (see for instance  \cite[\S3.3]{allpu} or \cite{changskjel}).

 Under these conditions, in view of Proposition \ref{prop::PostnikovPiece} and Remark \ref{remarko}, each component of   $\map(BG,X^G)$ and $X^{hG}$ has the homotopy type of a rational nilpotent CW-complex. Moreover, if $N$ is such that $\pi_{\ge N}(X)=\pi_{\ge N}(X^G)\otimes\bq=0$, the map $k$ above is identified with the corresponding
  \begin{equation}\label{ktruncada}
X^G\hookrightarrow \map(BG^{(N)},X^G)\stackrel{{\gamma_N}_*}{\longrightarrow}\sec \xi_{N}\cong X^{hG},
 \end{equation}
 obtained by truncating in $N$ the diagram (\ref{diagramaprin}):
 \begin{equation}\label{diagramaprin2}
\xymatrix{
X^G\ar@{^(->}[rr] \ar[d]&&X\ar[d]\\
F_N\ar[rd]_{\eta_N}\ar[rr]^(.52){\gamma_N}&&E_N\ar[ld]^{\xi_N}\\
&BG^{(N)}.&
}
\end{equation}
We now obtain an explicit Sullivan model of this map. For it, choose a minimal model $
\varphi\colon (\Lambda V,d)\longrightarrow (\Lambda Z,d),
$ of the inclusion $ X^G\hookrightarrow X$ and a model of diagram (\ref{diagramaprin2}) of the form
$$
\xymatrix{
&(A\otimes\Lambda V,D)\ar[r]\ar[dd]_\psi&(\Lambda V,d)\ar[dd]_\varphi\\
(A,0)\ar[ru]\ar[rd]&&\\
&(A,0)\otimes(\Lambda Z,d)\ar[r]&(\Lambda Z,d)}
$$
where
$
A=(\Lambda Q/(\Lambda Q)^{>N},0)
$ is as in (\ref{hacefalta}). By applying directly Theorem \ref{thm::modeloEspacioSecciones}, one obtain models $(\Lambda(V\otimes A^{\sharp}),\widetilde{d}\,)$ and $(\Lambda(Z\otimes A^{\sharp}),\widetilde{d}\,)$ of $\sec\xi_N$ and $\sec\eta_N$ respectively.

\begin{theorem}\label{principal2}
The composition
$$
(\Lambda(V\otimes A^{\sharp}),\widetilde{d}\,)\stackrel{\phi}{\longrightarrow} (\Lambda(Z\otimes A^{\sharp}),\widetilde{d}\,)\stackrel{\gamma}{\longrightarrow} (\Lambda Z, d)
$$
where,
$$
 \phi(v\otimes\alpha)=\rho^{-1}[\psi(v)\otimes\alpha],\quad v\otimes \alpha\in V\otimes A^{\sharp},
 $$
 and
 $$
\gamma(z\otimes\alpha)=\left\{\begin{array}{ccc}
                                     z & if & \alpha=1, \\
                                     0 & if & \alpha\neq1,
                                   \end{array}\qquad z\otimes\alpha\in Z\otimes A^\sharp,
\right.
$$
is a model of
 $$
 k\colon X^G\hookrightarrow X^{hG}.
 $$
\end{theorem}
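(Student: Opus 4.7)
My strategy is to model the factorization (\ref{ktruncada}) of $k$ piece by piece, using naturality of the section model construction (Theorem \ref{thm::modeloEspacioSecciones}). Concretely, I would model separately (a) the map $(\gamma_N)_*\colon \sec\eta_N\to\sec\xi_N$ on section spaces induced by $\gamma_N$, and (b) the constant-section inclusion $X^G\hookrightarrow\sec\eta_N=\map(BG^{(N)},X^G)$; the composite of the resulting two CDGA morphisms is then forced to model $k$ by functoriality.

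For (a), since $\psi\colon (A\otimes\Lambda V,D)\to A\otimes(\Lambda Z,d)$ is an $A$-linear morphism modeling $\gamma_N$ as a map over $BG^{(N)}$, I would first extend it to a CDGA morphism $\overline\psi\otimes 1_{A^\sharp}\colon \Lambda(A\otimes\Lambda V\otimes A^\sharp)\to\Lambda(A\otimes\Lambda Z\otimes A^\sharp)$, check that $A$-linearity of $\psi$ forces the defining relations of $J_V$ to land in $J_Z$, and then pass to the quotients. Transporting along the isomorphisms $\rho_V$ and $\rho_Z$ of (\ref{ecuation:definicionRho}) on the two sides produces precisely the formula $\phi(v\otimes\alpha)=\rho^{-1}[\psi(v)\otimes\alpha]$. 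For (b), the inclusion $X^G\hookrightarrow \map(BG^{(N)},X^G)$ is $\map(-,X^G)$ applied to $BG^{(N)}\to *$, and the latter map has model the augmentation $\epsilon\colon\bq\to A$. The mapping-space analogue of Theorem \ref{thm::modeloEspacioSecciones}, applied naturally to the morphism $\epsilon$, yields a CDGA morphism $\Lambda(Z\otimes A^\sharp)\to\Lambda(Z\otimes\bq^\sharp)=\Lambda Z$ given by $z\otimes\alpha\mapsto z\cdot\epsilon^\sharp(\alpha)$, where $\epsilon^\sharp\colon A^\sharp\to\bq$ sends $\alpha$ to $\alpha(1)$; this is exactly the $\gamma$ of the statement.

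The step requiring the most care is verifying that $\gamma$ is a well-defined CDGA morphism on the nose, which amounts to checking compatibility with the differentials. The algebra-morphism property is automatic since $\epsilon^\sharp$ is dual to the algebra morphism $\epsilon$, hence a coalgebra map. For the differential, by (\ref{delta}) the diagonal $\Delta(a_1^{j_1}\cdots a_t^{j_t})^\sharp=\sum p^\sharp\otimes q^\sharp$ runs over factorizations $pq=a_1^{j_1}\cdots a_t^{j_t}$; thus if $\alpha\neq 1$, every summand of $\widetilde d(z\otimes\alpha)=\rho^{-1}[dz\otimes\alpha]$ (expanded via the generators of $J$) contains at least one tensor factor $z_i\otimes\beta$ with $\beta\neq 1$, forcing $\gamma\widetilde d(z\otimes\alpha)=0=d\gamma(z\otimes\alpha)$. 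For $\alpha=1$ one uses $\Delta(1)=1\otimes 1$ to recover $\gamma\widetilde d(z\otimes 1)=dz=d\gamma(z\otimes 1)$. The main obstacle in the plan is really the bookkeeping in step (a): making sure $\overline\psi\otimes 1$ genuinely descends to the quotients $J_V\to J_Z$ on both sides, which hinges on $\psi$ being $A$-linear and on the fact that the generators of $J$ are defined symmetrically in the two situations. Once this is in place, functoriality of the construction identifies the simplicial realization of $\gamma\circ\phi$ with $k$, completing the proof.
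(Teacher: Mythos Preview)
Your proposal is correct and follows exactly the same route as the paper: factor $k$ via (\ref{ktruncada}), model $(\gamma_N)_*$ by $\phi$ (naturality of the Brown--Szczarba/Haefliger construction in the fibration variable) and model the constant-section inclusion $X^G\hookrightarrow\map(BG^{(N)},X^G)$ by $\gamma$ (naturality in the base variable), then compose. The paper's own proof is a two-line appeal to ``well known facts'' about section-space models for precisely these two maps; you have simply spelled out those facts in detail.
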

\begin{proof} Well known facts on the rational homotopy type of mapping spaces and spaces of sections of nilpotent fibrations let us assert that $\phi$ and $\gamma$ are models of
 $$
 \map(BG^{(N)},X^G)\stackrel{{\gamma_N}_*}{\longrightarrow}\sec \xi_{N}\cong X^{hG}
 $$
and
$$
X^G\hookrightarrow \map(BG^{(N)},X^G),
$$
respectively. The composition of these maps is precisely $k$ in view of (\ref{ktruncada}).
\end{proof}

We now apply all of the above to tori actions. As before $X$ will denote a rational space with $\dim\pi_*(X)<\infty$ in which the $m$-dimensional torus $\bt$ acts. Then, a deep result of Allday \cite[Thm. 3.3]{all1} asserts that
$$
\begin{aligned}
& \dim \pi_\text{even}(X^\bt)\otimes \bq\le \dim \pi_\text{even}(X),\\
& \dim \pi_\text{odd}(X^\bt)\otimes \bq\le \dim \pi_\text{odd}(X).
\end{aligned}
$$
In particular $\dim\pi_*(X^\bt)\otimes\bq<\infty$ and we do not have to impose this extra  condition. However, we do need to assume the following \cite[Cond. 1.1]{allpu3} ó \cite[Def. 3.1.2]{allpu}: if  $\bt^0_x$ denotes the so called {\em connective isotropy group}, that is, the path component of the isotropy group $\bt_x$ of $x$ containing the unit, the set $\{\bt_x^0\}_{x\in X}$ is finite.  This condition  obviously holds for $\bs^1$-actions and it is also satisfied for $\bt$-actions of a finite CW-complex   \cite[Rem. 3.2.5]{allpu} and hence, for the induced rational action.

We whall also make use of the rational homotopy version  of the classical {\em Borel Localization Theorem} which we briefly recall.

 Let
$$
\xymatrix{
&(R\otimes\Lambda V,D)\ar[r]\ar[dd]_\psi&(\Lambda V,d)\ar[dd]_\varphi\\
R\ar[ru]\ar[rd]&&\\
&R\otimes(\Lambda Z,d)\ar[r]&(\Lambda Z,d),}
$$
be a model of the map between the non truncated Borel fibrations
$$\xymatrix{
X^\bt\ar@{^(->}[rr] \ar[d]&&X\ar[d]\\
B\bt\times X^\bt\ar[rd]_\eta\ar[rr]^(.52)\gamma&&X_{h\bt}\ar[ld]^\xi\\
&B\bt.&
}
$$
Here, $
R=\bigl(\Lambda (a_1,\dots,a_m),0)$, $|a_i|=2$, $i=1,\dots,m$. As we assume $X^{h\bt}\not=\emptyset$, there exists a retraction $(R\otimes\Lambda V,D)\to R$ and therefore, we can write
$$
Dv=\sum_{i\ge 1} D_iv,\qquad D_iv\in R\otimes \Lambda ^iV,\qquad v\in V.
$$
Since $(\Lambda V,d)$ is minimal,
$
D_1 V\subset R^+\otimes V$ and thus, defining
 $D_1$ in $R\otimes V$ by
$D_1(a\otimes v)=a D_1v$,
it turns out that
$
(R\otimes V,D_1)
$
is a chain complex which is called the {\em space of indecomposables} of the action.
The morphism induced by $\psi$ on the corresponding spaces of indecomposables is of the form,
$$
\psi_1\colon (R\otimes V,D_1)\longrightarrow (R\otimes Z,0).
$$
Localizing this morphism at
$
\bk=\bq(a_1,\dots,a_m)$,  the field of fractions of $R$, we obtain a morphism of (ungraded) differential vector spaces
$$
\overline\psi_1\colon (\bk\otimes V,D_1)\longrightarrow (\bk\otimes Z,0)=(Z_\bk,0).
$$
 If we assume $\bk$  concentrated in degree  $0$ and consider in  $V$ and $Z$ the usual $\bz_2$-grading given by the parity of the generators, $\overline\psi_1$ becomes a  morphism of degree zero. Moreover,

 \begin{theorem}\label{borelho} {\em \cite[Thm. 3.3.2]{allpu}}
The morphism
$$
\overline\psi_1\colon (\bk\otimes V,D_1)\stackrel{\simeq}{\longrightarrow} (Z_\bk,0)
$$
is a quasi-isomorphism.\hfill$\square$
\end{theorem}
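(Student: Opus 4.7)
The plan is to deduce Theorem~\ref{borelho} from the classical Borel Localization Theorem for rational equivariant cohomology, promoted to a statement on indecomposables using Sullivan's minimal-model formalism over the field of fractions $\bk$.

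First I would invoke the classical Borel Localization Theorem. The hypothesis that the set $\{\bt_x^0\}_{x\in X}$ of connective isotropy subgroups is finite, together with the finiteness of $\pi_*(X)\otimes\bq$ (hence of $H^*(X;\bq)$), is precisely what is needed for the classical conclusion: the restriction $\gamma^*\colon H^*_\bt(X;\bq)\to H^*_\bt(X^\bt;\bq)$ becomes an isomorphism after inverting $a_1,\dots,a_m\in R$. Translating to the given CDGA models, this says that the $\bk$-linear extension
\[
\overline{\psi}\colon (\bk\otimes \Lambda V, D)\;\xrightarrow{\simeq}\;(\bk\otimes \Lambda Z, d)
\]
is a quasi-isomorphism of $\bk$-CDGAs.

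Next I would pass to indecomposables. Since $(\Lambda V,d)$ was chosen $\bq$-minimal and $R$ sits in positive even degrees, the differential $D$ sends $1\otimes v$ into $R^+\otimes\Lambda V + 1\otimes\Lambda^{\ge 2}V$; in particular its linear part with values in $R\otimes V$ is exactly $D_1$. After tensoring with $\bk$, the indecomposables complex of $(\bk\otimes\Lambda V,D)$ is $(\bk\otimes V, D_1)$. On the target, minimality of $(\Lambda Z,d)$ forces the analogous linear part to vanish, yielding $(Z_\bk,0)$. A quasi-isomorphism of $\bk$-Sullivan algebras induces, after $\bk$-minimalization of both sides, an isomorphism of minimal $\bk$-models and hence a quasi-isomorphism of their indecomposables complexes. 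Applied to $\overline{\psi}$, this produces precisely the claimed quasi-isomorphism $\overline{\psi}_1\colon (\bk\otimes V, D_1)\xrightarrow{\simeq}(Z_\bk,0)$, where the $\bz_2$-grading makes $\overline{\psi}_1$ degree zero as stated.

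The main obstacle is the bookkeeping in the second step: one must verify carefully that the further $\bk$-minimalization of $(\bk\otimes\Lambda V,D)$ does not disturb the quasi-isomorphism type of $(\bk\otimes V, D_1)$. The subtlety is that $D_1$ may contain terms of the form $a\otimes v'$ with $a\in R^+$, which become invertible scalars over $\bk$ and in principle could support further algebraic simplifications; one must show that any acyclic summand split off during $\bk$-minimalization contributes trivially to the homology of the indecomposables. An alternative, and arguably cleaner, route would be to induct on a principal refinement of the Hirsch filtration of $(\Lambda V,d)$, checking the localized statement directly at each elementary Hirsch extension from the explicit formula defining $D_1$ together with the classical circle-action localization lemma as base case. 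This is essentially the path carried out in detail in \cite[Thm.~3.3.2]{allpu}.
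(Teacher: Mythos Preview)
The paper does not prove this theorem: it is quoted from \cite[Thm.~3.3.2]{allpu} and closed with a $\square$. There is thus no in-paper argument to compare against; your sketch is a proposed justification of a result the authors simply cite.

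Your outline contains a genuine gap. The parenthetical ``finiteness of $\pi_*(X)\otimes\bq$ (hence of $H^*(X;\bq)$)'' is false in general: $\dim\pi_*(X)\otimes\bq<\infty$ does not force $\dim H^*(X;\bq)<\infty$, as $X=\bc P^\infty_\bq$ already shows. The standing hypothesis in this part of the paper is only $\dim\pi_*(X)<\infty$ together with the connective-isotropy condition, so the classical \emph{cohomological} Borel Localization Theorem---which needs finite-dimensional cohomology or some compactness of $X$---is not available in the stated generality, and your first step does not go through. This is precisely why Allday and Puppe formulate and prove a separate rational homotopy localization theorem directly at the level of indecomposables, via the inductive Hirsch-extension argument you allude to at the end, rather than deducing it from the cohomological statement. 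Your closing sentence effectively concedes this by deferring to \cite[Thm.~3.3.2]{allpu}, which matches what the paper does; but then the first two paragraphs of your proposal do not stand as an independent argument.
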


In the case of $\bs^1$-actions, $R=(\Lambda a,0)$ with $|a|=2$,  a straightforward consequence of the Theorem \ref{borelho} above is:

 \begin{lemma}\label{aux} There exist a subspace $W\subset V$, with $\dim W=\dim Z$,  homogeneous basis $\{w_j\}_{j\in J}$ and $\{z_j\}_{j\in J}$ of $W$ and $Z$ respectively, and non negative integers $\{m_j\}_{j\in J}$ such that
       $$
        \psi(w_j)=a^{m_j}z_j+\Gamma_j,\qquad \Gamma_j\in R\otimes \Lambda^{\ge 2}Z,\quad j\in J.\eqno{\square}
        $$
        \end{lemma}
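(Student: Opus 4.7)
The plan is to extract, from Theorem \ref{borelho}, a diagonalization of $\psi_1$ that descends from $R$-bases of $R\otimes V$ and $R\otimes Z$ to $\bq$-bases of $V$ and $Z$. Since $\overline{\psi}_1\colon (\bk\otimes V,D_1)\to (Z_\bk,0)$ is a quasi-isomorphism to a complex with zero differential, the induced isomorphism $H(\bk\otimes V,D_1)\cong Z_\bk$ forces $\overline{\psi}_1$, restricted to $D_1$-cycles, to surject onto $Z_\bk$. In particular, for every homogeneous $z\in Z$ there exists $\xi\in\ker D_1\cap(\bk\otimes V)$ with $\overline{\psi}_1(\xi)=1\otimes z$.

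First, fix a homogeneous $\bq$-basis $\{z_j\}_{j\in J}$ of $Z$ and, for each $j$, let $\pi_j\colon R\otimes Z\to R$ denote projection onto the $z_j$-coefficient. The composition $\pi_j\circ\psi_1\colon R\otimes V\to R$ is $R$-linear and homogeneous, so its image is a homogeneous ideal of the graded PID $R=\bq[a]$, non-zero by the surjectivity above, hence of the form $(a^{m_j})$ for a uniquely determined $m_j\ge 0$. Since $\pi_j\circ\psi_1$ is already defined on $V\subset R\otimes V$ and its values on $v\in V$ are scalar multiples of $a^{(|v|-|z_j|)/2}$, one can pick a homogeneous $w_j\in V^{|z_j|+2m_j}$ with $\pi_j(\psi_1(w_j))=a^{m_j}$.

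Second, with this choice $\psi_1(w_j)=a^{m_j}\otimes z_j+\sum_{i\ne j}q_{ji}(a)\otimes z_i$, where the $q_{ji}\in R$ are homogeneous. Processing the indices in order of increasing $|z_j|$, the cross-terms $q_{ji}\otimes z_i$ are to be eliminated by adding to $w_j$ suitable $\bq$-multiples of previously constructed $w_i$'s of matching degree $|w_i|=|w_j|$ (which forces $(|z_j|-|z_i|)/2=m_i-m_j$, making the required coefficient a rational scalar), together with degree-preserving $\bq$-linear adjustments of $\{z_j\}$ when necessary. Linear independence of the resulting $w_j$'s is automatic from their distinct leading images $a^{m_j}\otimes z_j$, so $W=\mathrm{span}_\bq\{w_j\}_{j\in J}$ has dimension $\dim Z$; then $\Gamma_j:=\psi(w_j)-\psi_1(w_j)\in R\otimes\Lambda^{\ge 2}Z$ by the definition of $\psi_1$ as the linear-in-$Z$ part of $\psi$, completing the proof.

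The main obstacle is the cross-term elimination in the second step. In the abstract setting, a graded Smith normal form over $R=\bq[a]$ yields homogeneous $R$-bases of $R\otimes V$ and $R\otimes Z$ putting $\psi_1$ in diagonal form, but such $R$-bases need not arise from $\bq$-bases of $V$ and $Z$. The reduction to the $\bq$-level requires careful degree bookkeeping, using the minimality of the Sullivan model $(\Lambda V,d)$ and, implicitly, the fact that $\psi$ models the inclusion in an actual Borel fibration rather than an arbitrary CDGA map merely satisfying Borel localization.
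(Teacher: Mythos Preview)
The paper itself offers no argument: the lemma is stated with a terminal $\square$ and the remark that it is ``a straightforward consequence of Theorem~\ref{borelho}.'' So there is no proof in the paper to compare yours against. Your outline is the natural one --- surjectivity of $\overline\psi_1$ from Borel localization, followed by an attempt to diagonalize $\psi_1$ simultaneously in $\bq$-bases of $V$ and of $Z$ --- and you are entirely right to isolate the second step as the crux: graded Smith normal form over $R=\bq[a]$ yields $R$-bases, and the row operations it uses (adding $a^k$-multiples of one generator to another) do not preserve $V\subset R\otimes V$.

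The gap you flag is genuine, and the elimination you sketch does not close it. Your proposed subtraction $w_j\mapsto w_j-\lambda w_i$ is available only when $|w_i|=|w_j|$, whereas the cross term $q_{ji}(a)\otimes z_i$ typically appears precisely when $|z_i|\ne|z_j|$ and no $w_i$ of the required degree exists in $V$. Concretely, take $V=\langle v_1,v_2\rangle$ in degrees $3,5$, $Z=\langle z_1,z_2\rangle$ in degrees $1,3$, $D_1=0$, and $\psi_1(v_1)=az_1$, $\psi_1(v_2)=az_2+a^2z_1$. Here $\overline\psi_1$ is an isomorphism, so the only hypothesis furnished by Theorem~\ref{borelho} holds, yet every homogeneous $w\in V$ has $\psi_1(w)$ equal to a scalar multiple of either $az_1$ or $az_2+a^2z_1$, and the second is never of the form $a^m z$ with $z\in Z$ homogeneous; the conclusion of the lemma fails for this $\psi_1$. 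So either some further constraint on $\psi$ coming from the geometry of the $\bs^1$-action --- not derivable from Borel localization alone --- excludes such data, in which case that constraint must be named and used, or the lemma is to be read with $\Gamma_j$ allowed an additional summand in $a^{m_j+1}R\otimes Z$ (that is, $a^{m_j}z_j$ is only the \emph{leading} term of $\psi_1(w_j)$ in the $a$-adic sense). What is genuinely immediate from Theorem~\ref{borelho} is that the collection of $a^m$-coefficients of $\psi_1(w)$, as $w$ ranges over $V$ and $m$ over $\bn$, spans $Z$; this is exactly what the proof of Theorem~\ref{principal3} uses, and a variant of it handles Proposition~\ref{nunca}. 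I would recommend proving and invoking that weaker statement rather than the lemma as literally written.
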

 Our first result involving tori actions generalizes  \cite[Thm. 4]{bufemu2}.

\begin{theorem}\label{principal3}
the morphism induced by
$$
k\colon X^\bt\longrightarrow X^{h\bt}
$$
in rational homotopy
$$
\pi_*(k)\otimes\bq\colon\ \pi_*(X^\bt)\otimes \bq\hookrightarrow \pi_*(X^{h\bt}),
$$
is injective. In particular, if $X^\bt$ and $X^{h\bt}$ are simply connected,
$
\cat (X^\bt_\bq)\le \cat X^{h\bt}.
$
\end{theorem}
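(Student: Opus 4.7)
The plan is to use the explicit Sullivan model of $k$ from Theorem \ref{principal2} and prove injectivity of $\pi_*(k)\otimes\bq$ by showing that the model is surjective on the spaces of indecomposables of the associated minimal models. Once injectivity is in hand, the LS-category inequality in the simply connected case is immediate from the F\'elix--Halperin mapping theorem (\cite{fehatho}, Theorem~28.6), giving $\cat(X^{\bt}_{\bq})=\cat_0(X^{\bt})\le\cat_0(X^{h\bt})\le\cat(X^{h\bt})$.

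First I would unwind $\gamma\circ\phi$ at the linear level. Write the linear part of $\psi$ as $\psi_1\colon V\to R\otimes Z$ and decompose $\psi_1(v)=\sum_l f_l(v)\otimes z_l$ for $\{z_l\}$ a chosen homogeneous basis of $Z$, with coordinate maps $f_l\colon V\to R$. Since $\pi_{\ge N}(X)=0$ forces $|f_l(v)|=|v|-|z_l|<N$, each $f_l(v)$ descends to $A=R/R^{>N}$. Using the coproduct formula (\ref{delta}) together with the defining relations of $J$, a direct unwinding of $\rho^{-1}$ yields
$$
\gamma\circ\phi\bigl(v\otimes\alpha^{\sharp}\bigr)\;=\;\sum_l \langle f_l(v),\alpha^{\sharp}\rangle\,z_l\;+\;(\text{decomposables in }\Lambda Z),
$$
because, in $\Delta\alpha^{\sharp}=\sum_{pq=\alpha}p^{\sharp}\otimes q^{\sharp}$, only the term with $p=\alpha$ and $q=1$ survives $\gamma$, while the higher-order summands $\Gamma\in R\otimes\Lambda^{\ge 2}Z$ are routed through (\ref{deltan}) into wedge products of length $\ge 2$ in $\Lambda(Z\otimes A^{\sharp})$, hence land in $\Lambda^{\ge 2}Z$ after $\gamma$. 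Consequently the induced map on indecomposables is the $\bq$-linear map
$$
\Phi\colon V\otimes A^{\sharp}\To Z,\qquad \Phi(v\otimes\alpha^{\sharp})=\sum_l\langle f_l(v),\alpha^{\sharp}\rangle\,z_l,
$$
and it suffices to prove $\Phi$ surjective.

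Surjectivity of $\Phi$ is equivalent, by duality, to the $\bq$-linear independence of $\{f_l\}_l$ in $\Hom_{\bq}(V,R)$; this I would derive from Theorem \ref{borelho}. Suppose $\sum_l c_l f_l=0$ and set $z^{*}=\sum_l c_l z_l^{*}\in Z^{*}$. Then the $R$-linear extension $z^{*}_R\colon R\otimes Z\to R$ vanishes on $\psi_1(V)$, and by $R$-linearity on the image of the $R$-linear extension of $\psi_1$ to $R\otimes V$. Tensoring with $\bk=\bq(a_1,\dots,a_m)$, the induced $z^{*}_{\bk}$ vanishes on the image of $\overline{\psi}_1\colon\bk\otimes V\to Z_{\bk}$. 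But by Theorem \ref{borelho}, $\overline{\psi}_1$ is a quasi-iso; since the target has zero differential, $\overline{\psi}_1$ is surjective, so $z^{*}_{\bk}=0$ on $Z_{\bk}$, forcing $z^{*}=0$ and every $c_l=0$. This proves $\Phi$ surjective, hence $\pi_*(k)\otimes\bq$ injective, and the mapping theorem closes the argument.

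The main delicate point is the computation of $\gamma\circ\phi$ modulo decomposables: one must carefully unfold $\rho^{-1}$ using the defining relations of $J$ and the iterated coproduct (\ref{deltan}) to check that higher-order contributions truly become decomposable after $\gamma$, and that the linear part produces the clean pairing formula above. This is a routine but attention-demanding calculation, in the spirit of (and indeed much simpler than) those already carried out in the proof of Theorem \ref{principal1}.
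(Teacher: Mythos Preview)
Your proof is correct and follows the same overarching strategy as the paper---show that the model of $k$ from Theorem~\ref{principal2} is surjective on indecomposables, then apply the mapping theorem---but the execution genuinely differs. The paper treats only the circle case directly: it invokes Lemma~\ref{aux} (a ``diagonalized'' consequence of Theorem~\ref{borelho} stated only for $\bs^1$) to exhibit explicit elements $w_j\otimes(a^{m_j})^{\sharp}$ with $\gamma\phi\bigl(w_j\otimes(a^{m_j})^{\sharp}\bigr)=z_j+\text{decomposables}$, and then reduces the general torus to this case by choosing a circle $\bs^1\subset\bt$ with $X^{\bt}=X^{\bs^1}$ (Allday's trick). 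You bypass both steps: your pairing formula for $\Phi$ is valid for an arbitrary torus, and you deduce its surjectivity from the linear independence of the coordinate functions $\{f_l\}$, which you extract directly from Theorem~\ref{borelho} via the observation that a quasi-isomorphism onto a complex with zero differential is surjective. Your route is more uniform and slightly more conceptual; the paper's route has the compensating virtue of producing concrete preimages of each $z_j$, which is what feeds into the explicit computations later in \S5 (e.g.\ Proposition~\ref{ejemplo}).
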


\begin{remark} Observe that, whenever $X$ is the equivariant rationalization of a $\bt$-CW-complex,  $X^{\bt}$ is already a rational space and thus, theorem above asserts that
$$
\pi_*(k)\colon \pi_*(X^{\bt})\hookrightarrow\pi_*(X^{h\bt})
$$
is injective.
\end{remark}

\begin{proof} Assume first $m=1$, that is, $\bt=\bs^1$. By Theorem \ref{principal2}, the inclusion
$$
 k\colon X^{\bs^1}\hookrightarrow X^{h{\bs^1}}
 $$
is modeled by the composition
$$
(\Lambda(V\otimes A^{\sharp}),\widetilde{d}\,)\stackrel{\phi}{\longrightarrow} (\Lambda(Z\otimes A^{\sharp}),\widetilde{d}\,)\stackrel{\gamma}{\longrightarrow} (\Lambda Z, d)
$$
where,
$$
 \phi(v\otimes\alpha)=\rho^{-1}[\psi(v)\otimes\alpha],\quad v\otimes \alpha\in V\otimes A^{\sharp},
 $$
 and
 $$
\gamma(z\otimes\alpha)=\left\{\begin{array}{ccc}
                                     z & if & \alpha=1, \\
                                     0 & if & \alpha\neq1,
                                   \end{array}\qquad z\otimes\alpha\in Z\otimes A^\sharp,
\right.
$$
Here,  $A=R/R^{\ge M}$, $R=\bq[a]$ for some positive integer $M$ which can be chosen arbitrarily big.
Choose homogeneous basis $\{w_j\}_{j\in J}$ and $\{z_j\}_{j\in J}$ of $W$ and $Z$ respectively, and non negative integers $\{m_j\}_{j\in J}$ satisfying the identities of Lemma \ref{aux} above. Observe that, for each $j\in J$,
$$
\phi(w_j\otimes (a^{m_j})^\sharp)=\rho^{-1}[\psi(w_j)\otimes (a^{m_j})^\sharp]=
$$
$$
\rho^{-1}[a^{m_j}\otimes z_j\otimes (a^{m_j})^\sharp]+\rho^{-1}[\Gamma_j\otimes (a^{m_j})^\sharp]=z_j+ \Phi,\qquad \Phi\in\Lambda^{\ge2}(Z\otimes A^{\sharp}).
$$
Thus,
$$
\gamma\phi(w_j\otimes (a^{m_j})^\sharp)=z_j+ \Psi,\qquad \Psi\in \Lambda^{\ge 2}.
$$
That is, the model of $k$ is surjective on indecomposables and thus, $\pi_*(k)\otimes\bq$ is injective.

        For a general torus action one observes as in the proof of \cite[Thm. 3.7]{all1} that there exists a circle $\bs^1\subset\bt$ such that
$
X^\bt=X^{\bs^1}$.
Consider the commutative  diagram induced by this inclusion $\bs^1\subset\bt$ involving their Borel fibrations
$$\xymatrix{
X_{h\bs^1}\ar[d]\ar[r]&{X_{h\bt}}\ar[d]\\
B\bs^1\ar@/^1pc/[u]^{\overline\sigma}\ar[r]_g&B\bt,\ar@/_1pc/[u]_\sigma
}$$
and observe that, given $\sigma \in X^{h\bt}$ a section as in the diagram, the map $\sigma\circ g$ factors through  $X_{h\bs^1}$ to produce the section $\overline\sigma\in X^{h\bs^1}$. Thus, we have a map $X^{h\bt}\to X^{h\bs^1} $ closing the following square,
$$\xymatrix{
X^\bt\ar@{^(->}[r]^{k_\bt}&X^{h\bt}\ar[d]\\
X^{\bs^1}\ar@{^(->}[r]_{k_{\bs^1}}\ar@{=}[u]&X^{h\bs^1}.
}
$$
But $k_{\bs^1}$ induces an injection on  rational homotopy groups, so does $k_\bt$.

Finally, if $X^\bt$ and $X^{h\bt}$ are simply connected, the   \emph{Mapping Theorem} \cite[Thm. 28.6]{fehatho} guarantees that $\cat X^\bt_\bq\le \cat X^{h\bt}$.
\end{proof}

\begin{example}
{\em Consider in $\bs^2$ the trivial $\bs^1$-action and the $\bs^1$-action by rotation along an axis. The fixed point sets are $\bs^2$ and $\bs^0$ respectively. The models of the Borel constructions of these actions correspond respectively to choose $\lambda=0$ and $\lambda=1$ in the general model  of the Borel construction of any $\bs^1$ action on $\bs^2$ given by
$$(\Lambda a\otimes \Lambda (x,y),d)\ \text{ with }\ dy = x^2+\lambda a^{n/2} x\ \text{ and }\ |x|=2.$$

To see this, consider as above $\bk=\bq(a)$ the field of fractions of  $\Lambda a$. For $\lambda=0$, one trivially sees that $(\bk\otimes \Lambda (x,y),d)$ is the model of $\bs^2$ over $\bk$.

If $\lambda=1$ we obtain that the cohomology of $(\bk\otimes \Lambda (x,y),d)$ is $\bk[x]/(x^2-ax)$ which is isomorphic to $\bk e_1\oplus\bk e_2$ with $e_1=a^{-1}x$ and $e_2=1-a^{-1}x$. Thus, $e_1+e_2=1$, $e_1^2=e_1$ and $e_2^2=e_2$. This proves that this is a model of $S^0$ over $\bk$.
}

\end{example}

 Recall \cite[Def. 3.2]{allpu3},\cite[Def. 3.3.13]{allpu} that  the {$\bt$-rational homotopy groups}  $\pi_\bt^*(X)$ associated to a given $\bt$-action are defined as the homology of the indecomposables
$
\pi_\bt^*(X)=H^*(R\otimes V,D_1)
$. The action is said to be
{\em $\bt$-minimal} if $D_1=0$, In this case, by Theorem \ref{borelho}, $\dim V=\dim Z$. Equivalently, $
\dim\pi_*(X)=\dim\pi_*(X^\bt)\otimes\bq$.
For instance, any $\bt$-action in a space $X$ whose rational  homotopy is concentrated in odd degrees is necessarily minimal by degree reasons.

\begin{proposition}\label{nunca} For any $\bs^1$-minimal action in a non trivial,  elliptic, simply connected complex $X$, the inclusion $k\colon X^{\bs^1}\hookrightarrow X^{h\bs^1}$ is never a rational homotopy equivalence.
\end{proposition}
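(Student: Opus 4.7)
The plan is to argue by contradiction, comparing the dimensions of rational homotopy groups. By Theorem \ref{principal3}, $\pi_*(k)\otimes\bq$ is always injective, so if $k$ were a rational homotopy equivalence one would need $\dim\pi_*(X^{\bs^1})\otimes\bq = \dim\pi_*(X^{h\bs^1})\otimes\bq$. The left-hand side is immediate from $\bs^1$-minimality together with Borel localization (Theorem \ref{borelho}): since $D_1 = 0$, the quasi-isomorphism $\overline\psi_1\colon \bk\otimes V \to \bk\otimes Z$ reduces to an isomorphism of ungraded vector spaces, forcing $\dim V = \dim Z$ and hence $\dim\pi_*(X^{\bs^1})\otimes\bq = \dim V$.

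Next I would bound $\dim\pi_*(X^{h\bs^1})\otimes\bq$ using the model in Theorem \ref{thm::modeloComponenteEspacioSecciones}, applied to the component of $X^{h\bs^1}$ containing the image of $k$. This component corresponds to a fixed-point section, modeled by the retraction $\varphi(V) = 0$, whose induced augmentation on $(\Lambda(V\otimes A^\sharp), \widetilde d)$ vanishes identically on $V\otimes A^\sharp$. The crucial claim is that the resulting component model $(\Lambda W, \widetilde d_\varphi)$, where $W = \overline{V\otimes A^\sharp}^1 \oplus (V\otimes A^\sharp)^{\ge 2}$, is already a minimal Sullivan algebra. Granting this, $\dim\pi_*(X^{h\bs^1})\otimes\bq = \dim W$, and a direct count over a homogeneous basis of $V$ gives $\dim W = \sum_v \lceil |v|/2\rceil$. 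Each summand is at least $1$, with strict inequality precisely when $|v|\ge 3$. Since $X$ is non-trivial, simply connected and elliptic, $V$ is nonzero and must have a generator in odd degree (otherwise $(\Lambda V^{\text{even}},0)$ would have infinite-dimensional cohomology), so some $v$ has $|v|\ge 3$ and $\dim W > \dim V$. This contradiction completes the argument.

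The main obstacle is establishing the minimality of $(\Lambda W, \widetilde d_\varphi)$. The starting observation is that $\bs^1$-minimality $D_1 = 0$ forces $Dv \in \Lambda^{\ge 2}V \oplus R^+\otimes \Lambda^{\ge 2}V$, so each $Dv$ is decomposable as an element of $A\otimes\Lambda V$. Via the relations generating $J$ together with the coproduct formulas on $A^\sharp$, this decomposability propagates through $\rho^{-1}$: if $Dv = \sum q_j\otimes \mu_j$ with $\mu_j \in \Lambda^{\ge 2}V$, then $\widetilde d(v\otimes \alpha) = \rho^{-1}[\sum q_j\otimes \mu_j \otimes \alpha]$ unfolds into a sum of products of at least two elements of $V\otimes A^\sharp$, landing in $\Lambda^{\ge 2}(V\otimes A^\sharp)$. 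Passing to the quotient by $K_\varphi$ with $\varphi = 0$ either kills a summand (if it contains a factor of non-positive degree) or leaves it unchanged, so decomposability survives. Moreover, since $\widetilde d$ of a degree-zero generator is itself decomposable and therefore has no linear part in $(V\otimes A^\sharp)^1$, the quotient $\overline{V\otimes A^\sharp}^1$ coincides with $(V\otimes A^\sharp)^1$ itself. Tracking these identifications carefully to confirm that no linear term is ever introduced is where the real bookkeeping of the proof lies.
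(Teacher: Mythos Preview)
Your argument is correct and takes a genuinely different route from the paper's. Both proofs hinge on the same underlying fact, which you make explicit and the paper uses tacitly: under $\bs^1$-minimality, $Dv\in A\otimes\Lambda^{\ge 2}V$, so $\widetilde d$ lands in $\Lambda^{\ge 2}(V\otimes A^\sharp)$ and the component model at $\varphi=0$ is already minimal. The paper then simply picks an odd generator $v\in V$ with $\lvert v\rvert\ge 3$ and, using Lemma~\ref{aux}, checks directly that either $v\otimes 1$ or $v\otimes a^\sharp$ (a positive-degree generator) is sent by the model $\gamma\circ\phi$ of $k$ to a decomposable element of $\Lambda Z$; hence the linear part of the model of $k$ has nontrivial kernel and $\pi_*(k)\otimes\bq$ cannot be surjective. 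You instead perform a global dimension count: minimality of the component model gives $\dim\pi_*(X^{h\bs^1})=\dim(V\otimes A^\sharp)^{\ge 1}=\sum_v\lceil\lvert v\rvert/2\rceil$, which strictly exceeds $\dim V=\dim Z$ as soon as some $\lvert v\rvert\ge 3$.

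What each approach buys: the paper's argument is shorter and produces an explicit homotopy class in $\pi_*(X^{h\bs^1})$ not in the image of $\pi_*(k)$; yours is more self-contained about why that class is nonzero (you actually justify minimality of the section-space model rather than leaving it implicit) and yields the bonus formula $\dim\pi_*(X^{h\bs^1})=\sum_v\lceil\lvert v\rvert/2\rceil$. Two small remarks on your write-up: the appeal to Theorem~\ref{principal3} is not really needed, since a rational homotopy equivalence already forces equality of total dimensions; and your phrase ``decomposable as an element of $A\otimes\Lambda V$'' should be read as ``of $V$-word-length $\ge 2$'' (which is what you use), since an element such as $a\otimes v$ is decomposable in the usual sense but would not suffice for your purpose.
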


\begin{proof} Again, via Theorem \ref{principal2}, $k$ is modeled by
$$
(\Lambda(V\otimes A^{\sharp}),\widetilde{d}\,)\stackrel{\phi}{\longrightarrow} (\Lambda(Z\otimes A^{\sharp}),\widetilde{d}\,)\stackrel{\gamma}{\longrightarrow} (\Lambda Z, d),
$$
with $A=R/R^{\ge M}$, $R=\bq[a]$,  and  $\dim V=\dim Z$.

In the minimal model $(\Lambda V,d)$ of $X$ choose a non trivial element $v\in V$ of odd degree at least $3$ and proceed as follows: by  Lemma \ref{aux} there exist a non negative integer $n$ and  $z\in Z$ such that
$$
        \psi(v)=a^{n}z+\Gamma,\ \Gamma \in R\otimes \Lambda^{\ge 2}Z.$$ Then, if
$n\ge 1$, $\gamma\circ\phi(v\otimes 1)=0$. Otherwise, if $n=0$, choose $\alpha\in  A^\sharp$ dual of $a$. Observe that $|v\otimes\alpha|\ge 1$ and $\gamma\circ\phi(v\otimes \alpha)=0$. Therefore, $\pi_*(k)\otimes\bq$ is never surjective.
\end{proof}

\begin{proposition}\label{ejemplo} Given a $\bt$-action in the odd dimensional rational sphere $\bs^n_\bq$, the map $k\colon {\bs^n_\bq}^{\bt}\hookrightarrow  {\bs^n_\bq}^{h\bt}$ has the rational homotopy type of the inclusion of a factor
$$
\bs^{n_i}\hookrightarrow \bs^{n_1}\times\dots\times\bs^{n_p},
$$
of a product of odd dimensional spheres.
 \end{proposition}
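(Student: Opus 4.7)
The plan is to make the model of $k$ given by Theorem \ref{principal2} explicit enough to recognize it, after a change of basis on the source, as the projection onto a single generator in a free CDGA on odd-degree generators.

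First I would pin down source, target and the connecting morphism $\psi$. Allday's estimates force $\dim\pi_*({\bs^n_\bq}^\bt)\otimes\bq\le 1$ with everything concentrated in odd degree, and since $\chi(\bs^n_\bq)=0$ the (non-empty) fixed point set must be rationally equivalent to an odd sphere $\bs^{n_i}_\bq$. By Theorem \ref{torito} the Borel fibration of $\bs^n_\bq$ is trivial, and that of ${\bs^n_\bq}^\bt$ is trivial (the $\bt$-action on the fixed set being trivial), so models may be chosen as
$$
(R,0)\longrightarrow (R\otimes\Lambda x,0)\longrightarrow (\Lambda x,0)\quad\text{and}\quad (R,0)\longrightarrow (R\otimes\Lambda z,0)\longrightarrow (\Lambda z,0),
$$
with $R=\Lambda(a_1,\ldots,a_m)$, $|x|=n$, $|z|=n_i$. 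The same theorem also yields $(\Lambda(x\otimes J),0)$ as a model of ${\bs^n_\bq}^{h\bt}$, where $J=\{(a_1^{j_1}\cdots a_m^{j_m})^\sharp:\sum_s j_s\le (n-1)/2\}$. Because $x$ and $z$ are odd while $R$ lives in even degrees, any $R$-linear CDGA model $\psi$ of $\gamma$ must satisfy $\psi(x)=Pz$ for a unique $P\in R$ of even degree $n-n_i$, and the Borel localization Theorem \ref{borelho}, applied to the one-dimensional $V$ and $Z$, forces $P\ne 0$ (this is the $\bt^m$-analogue of Lemma \ref{aux}, which in our one-dimensional setting reduces to the statement that $\overline\psi_1$ is an isomorphism of $\bk$-vector spaces).

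Next I would apply Theorem \ref{principal2}: a model of $k$ is the composition
$$
(\Lambda(x\otimes J),0)\stackrel{\phi}{\longrightarrow}(\Lambda(z\otimes A^\sharp),\widetilde{d}\,)\stackrel{\gamma}{\longrightarrow}(\Lambda z,0),
$$
and a short calculation using the diagonal formula (\ref{delta}) and the relations defining the isomorphism $\rho$ of (\ref{ecuation:definicionRho}) gives
$$
\gamma\circ\phi(x\otimes\alpha)=\alpha(P)\,z,\qquad \alpha\in J,
$$
where $\alpha(P)$ denotes the natural pairing of $\alpha\in A^\sharp$ with $P\in A$. Therefore $\gamma\circ\phi$ vanishes on every generator $x\otimes\alpha$ of degree $\ne n_i$, and on the finite-dimensional subspace of generators of degree $n_i$ it is the non-zero linear form $\alpha\mapsto\alpha(P)$.

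Finally, picking $\alpha_0\in J$ with $\alpha_0(P)\ne 0$, I would set $y_1=\alpha_0(P)^{-1}(x\otimes\alpha_0)$ (which has degree $n_i$) and extend it to a homogeneous basis $\{y_1,\ldots,y_r\}$ of the $\bq$-span of $x\otimes J$ whose remaining elements $y_2,\ldots,y_r$ span the kernel of $\gamma\circ\phi$. Every original generator is of odd degree, so every $y_\ell$ is too; since every differential involved vanishes, the change of basis identifies $(\Lambda(x\otimes J),0)$ with $(\Lambda(y_1,\ldots,y_r),0)$, the minimal model of a product $\bs^{|y_1|}\times\cdots\times\bs^{|y_r|}$ of odd dimensional spheres, and under this identification $\gamma\circ\phi$ sends $y_1\mapsto z$ and $y_\ell\mapsto 0$ for $\ell\ge 2$, which is precisely the Sullivan model of the inclusion of the $\bs^{|y_1|}=\bs^{n_i}$-factor. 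The only non-routine step is the verification of the formula $\gamma\circ\phi(x\otimes\alpha)=\alpha(P)z$, which is a bookkeeping exercise with the relations that define $\rho$.
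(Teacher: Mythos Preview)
Your argument is correct and follows the same path as the paper: identify the model of $k$ via Theorem~\ref{principal2}, use Theorem~\ref{torito} to see the source is a free CDGA on odd generators with zero differential, and use Borel localization to pin down the target and the nontriviality of $\psi$. The paper treats the circle case first, where $A^\sharp$ has a single basis element in each degree so that $\gamma\circ\phi$ is already a coordinate projection without any change of basis, and then simply says one proceeds analogously for a general torus via Theorem~\ref{torito}; your treatment of the general $\bt^m$ case directly, with the explicit formula $\gamma\circ\phi(x\otimes\alpha)=\alpha(P)\,z$ and the change of basis adapted to the kernel of $\alpha\mapsto\alpha(P)$, is exactly what that ``similar argument'' amounts to. One minor remark: the Euler-characteristic justification for $\dim Z=1$ is unnecessary (and its applicability to the rational space $\bs^n_\bq$ is not set up in the paper), since the same Borel localization you invoke for $P\neq 0$ already gives $\dim_\bk(\bk\otimes V)=\dim_\bk Z_\bk$ once you note that $D_1=0$ on the one-dimensional odd $V$.
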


 \begin{proof} For  $\bt=\bs^1$ the minimal model of  $ {\bs^n_\bq}^{h\bs^1}$ is computed in the proof of Theorem
 \ref{accionenesfera}(i):
 $$
 \bigl(\Lambda (v\otimes\alpha_2,v\otimes\alpha_4,\dots,v\otimes\alpha_{n-1}),0\bigr),
 $$
  with $|v|=n$, $|v\otimes\alpha_i|=n-2i$. On the other hand, in view of Theorem \ref{borelho}, and as the action is necessarily minimal, the model of  ${\bs^n_\bq}^{\bs^1}$ is $(\Lambda z,0)$, with $|z|=j$ odd no bigger than $n$.  Hence, $k$ is modeled by the morphism
  $$
  \bigl(\Lambda (v\otimes\alpha_2,v\otimes\alpha_4,\dots,v\otimes\alpha_{n-1}),0\bigr)\longrightarrow (\Lambda z,0),
  $$
  which sends $v\otimes\alpha_{\frac{n-j}{2}}$ to $z$, and is trivial on the other generators.

  For a general $\bt$-action, one follows a similar argument using Theorem
 \ref{torito}.

\end{proof}

\begin{remark}  that, for $\bt=\bs^1$, the statement above can be refined: there exists an integer $j\in\{1,3,\dots,n\}$ such that $k$ has the rational homotopy type of the inclusion
$$
\bs^j\hookrightarrow \bs^1\times\bs^3\times\dots\times\bs^n.
$$
\end{remark}

\bigskip
\bigskip

\noindent{\sc Institut de Recherche en Math\'ematique et Physique, Universit\'e Catholique de Louvain , 2 Chemin du Cyclotron B-1348, Louvain-la-Neuve, Belgium}.\hfill\break
{urtzibuijs@gmail.com}
\hfill\break
{yves.felix@uclouvain.be}
\bigskip

\noindent {\sc Facultad de Ingenier{\'\i}a, Universidad de Deusto, Avda. de las Universidades 24, 48014 Bilbao, Spain}.\hfill\break {shuerta@cyl.com}
\bigskip

\noindent {\sc Departamento de \'Algebra, Geometr\'{\i}a y Topolog\'{\i}a, Universidad de M\'alaga, Ap.\ 59, 29080 M\'alaga, Spain}.\hfill\break {aniceto@uma.es}

\end{document}